\documentclass[tikz,border=6pt]{article}
\usepackage[letterpaper,top=2cm,bottom=2cm,left=3cm,right=3cm,marginparwidth=1.75cm]{geometry}
\usepackage{amsmath}
\usepackage{amssymb}
\usepackage{amsthm}
\usepackage{placeins}
\usepackage{mathtools}
\usepackage{xcolor}
\usepackage{caption}
\usepackage{subcaption}
\usepackage{verbatim}
\usepackage{listings}
\usepackage{hyperref}
\usepackage{algorithm}
\usepackage{algpseudocode}
\newtheorem{theorem}{Theorem}
\newtheorem{lemma}[theorem]{Lemma}

\newtheorem{corollary}[theorem]{Corollary}

\newtheorem{definition}[theorem]{Definition}
\newtheorem{remark}[theorem]{Remark}
\newtheorem{proposition}[theorem]{Proposition}
\newtheorem{model}{Model}
\newcommand{\norm}[1]{\left\lVert#1\right\rVert}
\DeclareMathOperator{\SPAN}{span}
\DeclareMathOperator{\fl}{fl}

\DeclareMathOperator{\RANGE}{R}
\DeclareMathOperator{\EXP}{\mathop{{}\mathbb{E}}}

\DeclareMathOperator{\REAL}{Re}
\DeclareMathOperator{\gammar}{\gamma_\text{reg}}
\DeclareMathOperator{\gammae}{\gamma_\text{exact}}
\DeclareMathOperator{\gammaesquared}{\gamma^2_\text{exact}}
\DeclareMathOperator{\Aext}{A_{\epsilon}}
\DeclareMathOperator{\Aexthat}{\widehat{A}_{\epsilon}}
\DeclareMathOperator{\Calg}{C_\text{alg}}
\DeclareMathOperator{\Stail}{S_\text{tail}}

\DeclareMathOperator{\machepsdp}{u^\text{dp}}
\DeclareMathOperator{\machepssp}{u^\text{sp}}

\DeclareMathOperator*{\argmin}{arg\,min}

\DeclareMathOperator*{\esssup}{ess\,sup}

\newcounter{cntr}
\newcommand{\from}{\colon}
\usepackage{graphicx} 
\usepackage{tikz}
\usetikzlibrary{arrows.meta,positioning,calc,fit}
\usepackage{tikz-3dplot}
\tdplotsetmaincoords{75}{130}

\usepackage{mleftright}
\mleftright

\title{Function Approximation in Numerically Rank-Deficient Bases}
\author{Astrid Herremans\setcounter{cntr}{\value{footnote}}\footnote{Department of Computer Science, KU Leuven, 3001 Leuven, Belgium (\url{astrid.herremans@kuleuven.be, daan.huybrechs@kuleuven.be})} \kern3pt and Daan Huybrechs\setcounter{footnote}{\value{cntr}}\footnotemark}
\date{}

\begin{document}

\maketitle

\begin{abstract}
    We study linear function approximation in a finite basis under finite-precision arithmetic. In a highly non-orthogonal basis, certain directions are only weakly represented, so that rounding errors can significantly distort the effectively spanned space. In the first part of the paper, we formalize this phenomenon through the notion of numerically rank-deficient bases and analyze their associated numerical span. Using a novel model for the rounding errors involved, we prove that approximation in the numerical span behaves like approximation in exact arithmetic subject to an additional penalty proportional to the size of the expansion coefficients and the unit roundoff. A key implication is that numerical orthogonalization—that is, recombination of the rounded basis functions—cannot mitigate the effects induced by finite-precision arithmetic. The framework also provides a theoretical justification for $\ell^2$-regularized approximation by showing that it achieves near-optimal numerical accuracy. Moreover, regularization controls the amplification of rounding errors in the computation of expansion coefficients—an essential requirement for the existence of practical algorithms. In the second part of the paper, we address sampling for function approximation in the presence of numerical rank-deficiency. We demonstrate that regularization has another fundamental benefit: it relaxes the conditions required for accurate least squares approximation from sampled data. This effect is made concrete through an analysis of randomized sampling based on a regularized variant of the Christoffel function. The resulting sample complexity bounds depend on an effective dimension that measures the number of directions that remain useful after finite-precision rounding. We also show that regularization renders the Christoffel function computable in contrast to the standard Christoffel function, whose numerical evaluation may require arbitrarily high precision in the presence of numerical rank-deficiency. We apply the derived theory to obtain new results for the discretization of univariate Fourier extension frames. In particular, we show that whereas uniform sampling is a suboptimal sampling strategy in exact arithmetic, it is near-optimal in finite precision.
\end{abstract} \vspace{2mm}

\noindent \textbf{Keywords.} function approximation, non-orthogonal expansions, finite-precision arithmetic, weighted least squares, regularization, orthogonalization, frames theory  \\
\noindent \textbf{MSC codes.}  42C15, 41A65, 65F22, 65T40, 65F25

\section{Introduction}
We consider the least squares approximation of a function $f \colon X \to \mathbb{C}$, $X \subseteq \mathbb{R}^d$, in the linear span of a finite set of functions $\Phi = \{\phi_1, \phi_2, \dots, \phi_n\}$. That is, we seek coefficients $c = (c_1,\dots,c_n) \in \mathbb{C}^n$ minimizing
\[
    \norm{\sum_{i=1}^n c_i \phi_i - f}_{L^2_\rho(X)}
\]
where $\rho$ is a prescribed non-atomic probability measure on $X$. While such problems are well understood in exact arithmetic, practical computations are necessarily carried out in finite precision. The purpose of this paper is to provide a theoretical framework that describes how rounding errors modify the convergence behaviour and the discretization of such problems, and to exploit the resulting structure.

We work under the standard relative-error model of floating-point arithmetic. Let $\fl \colon \mathbb{C} \to \mathbb{C}$ denote rounding to a finite-precision number. There exists a unit roundoff $u>0$ such that
\begin{equation} \label{eq:introfl}
\forall y \in \mathbb{C},\; \exists \delta \in \mathbb{C} \text{ with } |\delta| \le u: \quad \fl(y) = y(1+\delta).
\end{equation}
Applying this model pointwise to function evaluations, we seek coefficients $c \in \mathbb{C}^n$ such that the resulting numerical error
\[
    \norm{\sum_{i=1}^n c_i \fl(\phi_i) - \fl(f)}_{L^2_\rho(X)}
\]
is small.

Although rounding of $f$ has a negligible effect on the error in the sense that
\[
\|f-\fl(f)\|_{L^2_\rho} \le u \|f\|_{L^2_\rho},
\]
rounding of $\Phi$ can have a pronounced effect. A central object in our analysis is the \emph{numerical span}, defined as
\[
\SPAN(\{\fl(\phi_i)\}_{i=1}^n),
\]
which represents the space that is effectively accessible in finite-precision arithmetic. When the functions in $\Phi$ are strongly non-orthogonal, the numerical span may differ substantially from the exact span. A geometric illustration is given in Figure~\ref{fig:intro}. 

\begin{figure}
    \centering
    \begin{subfigure}{.49\linewidth}
        \begin{tikzpicture}[tdplot_main_coords, scale=1.5]

        \tikzset{
        axis/.style={->, thick},
        planeXY/.style={fill=blue!25, opacity=0.6},
        planeTilt/.style={fill=red!25, opacity=0.6},
        edge/.style={thick},
        vector/.style={->, very thick, black, line width=1pt},
        point/.style={circle, fill=black, inner sep=1.5pt}
        }

        \draw[axis] (0,0,0) -- (3.5,0,0) node[anchor=north] {$x$};
        \draw[axis] (0,0,0) -- (0,3,0) node[anchor=north] {$y$};

        \coordinate (A) at (0,0,0);
        \coordinate (B) at (3,0,0);
        \coordinate (C) at (3,2.5,0);
        \coordinate (D) at (0,2.5,0);

        \fill[planeXY] (A) -- (B) -- (C) -- (D) -- cycle;
        \draw[edge]    (A) -- (B) -- (C) -- (D) -- cycle;

        \node[rotate=-10] at (0,1.5,0.2) {$\SPAN(\{\phi_1,\phi_2\})$};

        \draw[vector] (0,0,0) -- (1.8,0,0) node[below] {$\phi_1$};
        \draw[vector] (0,0,0) -- (1.8,0.1,0) node[below right] {$\phi_2$};

        \node[point] (P1) at (2.6,0,0) {};
        \node[above left=0pt of P1] {$f_1$};

        \node[point] (P2) at (1,2,0) {};
        \node[left=0pt of P2] {$f_2$};

        \end{tikzpicture}
        \subcaption{Exact arithmetic}
    \end{subfigure}
    \begin{subfigure}{.49\linewidth}
        \begin{tikzpicture}[tdplot_main_coords, scale=1.5]

        \tikzset{
        axis/.style={->, thick},
        planeXY/.style={fill=blue!25, opacity=0.6},
        planeTilt/.style={fill=red!25, opacity=0.6},
        edge/.style={thick},
        vector/.style={->, very thick, black, line width=1pt},
        point/.style={circle, fill=black, inner sep=1.5pt}
        }

        \draw[axis] (0,0,0) -- (3.5,0,0) node[anchor=north] {$x$};
        \draw[axis] (0,0,0) -- (0,3,0) node[anchor=north] {$y$};
        \draw[axis] (0,0,0) -- (0,0,2) node[anchor=north east, align=center, xshift=-4pt, yshift=10pt] {\footnotesize new direction \\[-4pt] \footnotesize introduced by rounding};

        \coordinate (A) at (0,0,0);
        \coordinate (B) at (3,0,0);
        \coordinate (C) at (3,2.5,0);
        \coordinate (D) at (0,2.5,0);

        \fill[planeXY] (A) -- (B) -- (C) -- (D) -- cycle;
        \draw[edge]    (A) -- (B) -- (C) -- (D) -- cycle;

        \coordinate (E) at (0,0,0);
        \coordinate (F) at (3,0,0);  
        \coordinate (G) at (3,3,2.5);  
        \coordinate (H) at (0,3,2.5);

        \fill[planeTilt] (E) -- (F) -- (G) -- (H) -- cycle;
        \draw[edge]      (E) -- (F) -- (G) -- (H) -- cycle;

        \node[rotate=18] at (0,1.7,2.15) {$\SPAN(\{\fl(\phi_1),\fl(\phi_2)\})$};

        \node[point] (P1) at (2.6,0,0) {};
        \node[above left=2pt of P1] {$\fl(f_1)$};

        \node[point] (P2) at (1,2,0) {};
        \node[left=0pt of P2] {$\fl(f_2)$};

        \draw[vector] (0,0,0) -- (1.82,0,0.035) node[below, yshift=-1pt, xshift=1pt] {$\fl(\phi_1)$};
        \draw[vector] (0,0,0) -- (1.8,0.08,0.1) node[above right, xshift=2pt, yshift=4pt] {$\fl(\phi_2)$};

        \node[point, inner sep=0.4pt] (P3) at (1,1.18,0.98) {};
        \draw[dashed, line width=1pt] (P2) -- (P3) node[anchor=west, align=center,xshift=14pt, yshift=-14pt] {\footnotesize best numerical \\[-4pt] \footnotesize approximation error};
        \def\ra{0.1}
        \draw ($(P3) + (-\ra-0.25,0,0)$) -- ($(P3) + (-0.15,0,0)$) -- ($(P3) + (-0.15,\ra,-1.2*\ra)$);

        \end{tikzpicture}
        \subcaption{Finite-precision arithmetic}
    \end{subfigure}
    \caption{Geometric illustration of the difference between the exact and the numerical span. The basis $\Phi = \{\phi_1, \phi_2\}$ spans the $xy$-plane. However, the $y$-direction is only weakly represented in $\Phi$ and, hence, finite-precision rounding perturbs this direction significantly, while directions that are strongly represented—the $x$-direction—are essentially unaffected. As a result, the numerical span deviates substantially from the exact span; it contains new directions\protect\footnotemark that mostly consist of noise and do not contribute significantly to function approximation. It follows that numerical approximation of functions with a significant component along the $y$-direction, such as $f_2$, is poor. Weak representation corresponds to the need for large expansion coefficients; for example, $f_2 = c_1 \phi_1 + c_2 \phi_2$, where $\sqrt{c_1^2 + c_2^2}$ is large. This connection is central to our analysis of the numerical span and explains why $\ell^2$-regularization of the expansion coefficients damps rounding-sensitive directions.}
    \label{fig:intro}
\end{figure}
\footnotetext{Because the rounding of well-represented directions as well as of the functions $f_1$ and $f_2$ is negligible, only one new direction is drawn here for simplicity of exposition.}

Rounding sensitivity is related to representation strength, which can be characterized using the synthesis operator
\[
\mathcal{T} \from \mathbb{C}^n \to L^2(X,\rho), \qquad 
c \mapsto \sum_{i=1}^n c_i \phi_i,
\]
and its singular value decomposition
\begin{equation} \label{eq:introsvd}
\mathcal{T} = \sum_{i=1}^{\hat{n}} \sigma_i u_i v_i^*,
\end{equation}
where $\hat{n}$ denotes the dimension of $\SPAN(\Phi)$. The singular values $\sigma_i$ quantify how strongly the directions $u_i \in \SPAN(\Phi)$ are represented by $\Phi$. Directions with small singular values are weakly represented; they are associated with large expansion coefficients since
\[
    u_i = \mathcal{T}c \quad \Rightarrow \quad \|c\|_2 \geq 1 / \sigma_i.
\]
This makes them sensitive to finite-precision perturbations. In analogy with the matrix setting, we say that $\Phi$ is \emph{numerically rank-deficient} if the condition number of its synthesis operator satisfies
\[
\kappa(\mathcal{T}) = \frac{\sigma_1}{\sigma_{\hat{n}}} \geq \frac{1}{u},
\]
in which case at least one direction in $\SPAN(\Phi)$ is substantially perturbed by relative rounding of the order of $u$.

For a broad class of functions $\Phi$, we develop a characterization of the numerical span that leads to error bounds of the form
\begin{equation} \label{eq:introbound}
\inf_{v \in \SPAN(\{\fl(\phi_i)\}_{i=1}^n)} \|v - f\|_{L^2_\rho}
\;\asymp\;
\inf_{c \in \mathbb{C}^n}
\left( \|\mathcal{T}c - f\|_{L^2_\rho} + C u \|c\|_2 \right)
\end{equation}
with $C>0$, where the upper bound holds deterministically and the lower bound holds in expectation under suitable stochastic assumptions. These bounds formalize the phenomenon depicted in Figure~\ref{fig:intro}: functions with components along weakly represented directions experience large numerical errors, and these directions are associated with a need for large expansion coefficients. Whereas the upper bound in~\eqref{eq:introbound} is a consequence of~\eqref{eq:introfl}, novel and much more advanced modeling techniques are required to obtain a meaningful lower bound. An important consequence of the lower bound is that numerical orthogonalization, i.e., recombination of the set $\{\fl(\phi_i)\}_{i=1}^n$, does not mitigate the effects of finite-precision arithmetic described in this paper.

A natural approach when approximating in the presence of numerical rank-deficiency is to introduce $\ell^2$-regularization, which penalizes large coefficients and, hence, suppresses weakly represented directions. Leveraging our characterization of the numerical span, we show that $\ell^2$-regularization on the order of the unit roundoff yields near-best numerical errors. Moreover, regularization is indispensable in practice, as it prevents the amplification of rounding errors during the computation of the expansion coefficients. Together, these results provide a theoretical justification for the use of regularization.

Regularization has further fundamental implications for discretization: it relaxes the conditions required for accurate approximation from sampled data. Specifically, we show that accurate discrete approximation is guaranteed whenever the data seminorm 
\[ 
    \|\cdot\|_\mathcal{M} = \|\mathcal{M}\cdot\|_2
\] 
induced by a sampling operator $\mathcal{M}$, satisfies
\[
    \gamma \left( \|\mathcal{T}c \|_{L^2_\rho} + \epsilon \|c\|_2 \right) \leq \|\mathcal{T}c\|_\mathcal{M} + \epsilon \|c\|_2, \qquad \forall c \in \mathbb{C}^n
\]
for some $0 < \gamma \leq 1$, where $\epsilon$ is a regularization parameter of the order of the unit roundoff $u$. In exact arithmetic, the same norm inequality is required with $\epsilon = 0$. The regularized inequality reflects representation strength; for instance, when $\gamma = 1/2$, it is automatically satisfied for coefficient vectors $v_i$ associated with small singular values $\sigma_i \leq \epsilon$. Thus, in the presence of numerical rank-deficiency, near-best numerical approximations may be obtained from fewer measurements of $f$ than would be required in exact arithmetic.

In the light of this relaxation, we analyze the interaction between regularization and randomized discretization strategies, focusing on Christoffel sampling~\cite{cohen2017,adcockOptimalSamplingLeastsquares2024}. Motivated by related results in the fully discrete setting, we introduce a regularized inverse Christoffel function
\[
    k^\epsilon(x) = \sum_{i=1}^{\hat{n}} \frac{\sigma_i^2}{\sigma_i^2 + \epsilon^2} \lvert u_i(x) \rvert^2,
\]
where $\sigma_i$ and $u_i$ are defined in~\eqref{eq:introsvd}. In contrast to analyses in exact arithmetic based on the inverse Christoffel function $k$ (corresponding to $\epsilon = 0$ in the definition above), the regularized formulation again incorporates representation strength. We find that this has a significant influence on the pointwise behaviour of $k$ versus $k^\epsilon$. Since discretization conditions depend on quantities such as $\|k^\epsilon\|_{L^\infty}$ relative to the chosen sampling density, these pointwise differences can significantly affect the number of samples required for accurate numerical approximation. Beyond yielding new theoretical insight, regularization also renders $k^\epsilon$ computable in finite precision, whereas the numerical evaluation of $k$ may require arbitrarily high precision in the presence of numerical rank-deficiency.

The regularized Christoffel function naturally gives rise to an effective dimension
\[
    \hat{n}^\epsilon = \int_X k^\epsilon \, d\rho = \sum_{i=1}^{\hat{n}} \frac{\sigma_i^2}{\sigma_i^2 + \epsilon^2} \leq \hat{n}.
\] 
In our context, this (non-integer) quantity measures the number of directions in the numerical span that remain significant after finite-precision rounding. We obtain sampling bounds for numerical weighted least squares approximation that depend on $\hat{n}^\epsilon$ rather than $\hat{n}$, establishing a precise link between representation strength and required data.

Finally, we consider randomized discretization of Fourier extension frames, which are used for the approximation of smooth functions on irregular domains. We derive new results for both weighted and unweighted least squares approximation in the univariate case. In particular, we characterize how the effective dimension $\hat{n}^\epsilon$ depends on the extension parameter, and we show that uniform sampling is a near-optimal sampling strategy for numerical approximation, even though it is suboptimal for approximation in exact arithmetic.

\subsection{Related work and sources of numerical rank-deficiency}
The impact of numerical rank-deficiency on the regularized approximation error is investigated in~\cite{fna1,adcockFramesNumericalApproximation2020} in the specific context of frames. In particular, it is shown that if one truncates an infinite-dimensional frame, the finite subsequence becomes numerically rank-deficient for large enough $n$. The truncated singular value decomposition (TSVD) is proposed as a model for practical least squares solvers, though without theoretical justification. The accuracy of TSVD-based approximations is characterized in~\cite[Theorem 1.3]{adcockFramesNumericalApproximation2020} and depends on a regularization parameter, on the existence of accurate approximations with small expansion coefficients, and on two constants, $\kappa_{M,N}^\epsilon$ and $\lambda_{M,N}^\epsilon$, which quantify the influence of discretization and regularization. While these constants play a central role in the analysis, their dependence on representation strength is non-transparent. In the current work, the effects of finite-precision are made explicit, resulting in a full analysis in a more general setting.

A well-studied setting in which numerical rank-deficiency arises is function approximation on irregular domains, where orthonormal or Riesz bases are not readily available. A standard strategy in such settings is to restrict an orthonormal basis defined on a surrounding, regular domain to the target domain, which introduces redundancy and results in a numerically rank-deficient basis~\cite{matthysenFunctionApproximationArbitrary2018a}. Note that the target domain may sometimes be unknown, as is often the case for high-dimensional approximation methods arising in uncertainty quantification~\cite{adcock2020approximating}. Other settings in which numerical rank-deficiency has been analyzed (to some extent) include settings where bases are combined, enriched or weighted to better capture specific features of the function being approximated~\cite{herremansEfficientFunctionApproximation2024,papadopoulos2025frame}, radial basis function approximation~\cite{PiretFrames}, rational approximation~\cite{herremansResolutionSingularitiesRational2023}, Trefftz methods~\cite{huybrechs2019oversampled}, (deep) neural networks~\cite{adcock2022cas4dl} and the Method of Fundamental Solutions~\cite{barnett2008stability}. 

While the examples above concern settings in which numerical rank-deficiency is explicitly identified, related phenomena arise in many other numerical methods. In these settings, the analysis developed in this paper may provide a useful perspective and the proposed rounding-aware sampling strategies could potentially be beneficial. Among others, ill-conditioned representations occur in reduced basis methods for parameterized PDEs~\cite{quarteroni2015reduced}; overparameterized learning problems with highly correlated features~\cite{hastie2022surprises}; radial basis functions and kernel methods~\cite{fornberg2008stable}; finite element methods based on enrichment or locally adapted approximation spaces, such as enriched, multiscale and cut-cell FEM~\cite{babuvska1997partition,efendiev2013generalized,burman2025cut}; boundary integral and potential methods~\cite{kress1989linear}; moment problems~\cite{gautschiOrthogonalPolynomialsComputation2004}; quantum chemistry using nearly linearly dependent atomic orbital bases~\cite{lehtola2019review}; and redundant representation in signal processing~\cite{mallat1999wavelet}.

\subsection{Paper overview}
After introducing notation in Section~\ref{sec:notation}, the paper is organized into two main parts. The first develops the theoretical framework for approximation in finite-precision arithmetic, while the second applies this framework to the analysis of randomized sampling strategies.

The first part comprises Sections~\ref{sec:bestapprox}–\ref{sec:numericalexamples}. The core theory is presented in Sections~\ref{sec:bestapprox} and~\ref{sec:discretization}, of which an overview is provided in Figure~\ref{fig:diagram}. In Section~\ref{sec:bestapprox}, we introduce the notions of numerically rank-deficient bases and near-best numerical approximation. We show in particular that regularized approximation achieves near-best numerical accuracy. Section~\ref{sec:discretization} establishes complementary benefits of regularization, namely that it relaxes the conditions required for accurate discrete approximation and that it suppresses errors arising in the computation of expansion coefficients. Section~\ref{sec:numericalexamples} illustrates these concepts through numerical experiments.

The second part begins with Section~\ref{sec:randomizedsampling}, where we analyze randomized sampling through a regularized variant of the Christoffel function. We discuss the influence of regularization on randomized sampling and prove that regularization renders the Christoffel function computable under backward stable computation. Finally, in Section~\ref{sec:fourierextension}, we apply this theory to derive new results on the random discretization of the univariate Fourier extension frame.

\tikzset{
  every picture/.style={line width=0.75pt},
  box/.style={
    draw,
    minimum height=2.5em,
    minimum width=10em,
    align=center,
    inner sep=0.6em
  },
  arrow/.style={->, >=Stealth}
}
\begin{figure}
    \centering
    \resizebox{.92\textwidth}{!}{%
    \begin{tikzpicture}[
    node distance=.5cm and 1cm
    ]

    \node[box] (L1) {Upper numerical error bound \\ (Prop.~\ref{prop:continuous-upper})};
    \node[box, below=of L1] (L2) {Lower numerical error bound \\(Thm.~\ref{thm:lowerbound})};

    \node[box, right=of L1, yshift=-0.85cm] (M1) {Near-best numerical approximation \\ (Def.~\ref{def:nearbest})};
    \node[box, below=1.7cm of M1, xshift=0cm] (M2) {\textbf{$\boldsymbol{\ell^2}$-regularized approximation}};

    \node[box, below=1cm of M2, xshift=-3.8cm] (B1) {Relaxed discretization condition \\ (Thm.~\ref{thm:discreteerrorbound})};
    \node[box, below=1cm of M2, xshift=3.2cm] (B2) {Accuracy under backward stable computation \\ (Thm.~\ref{thm:backwardstab2})};

    \draw[arrow] (L1.east) -- (M1.west);
    \draw[arrow] (L2.east) -- (M1.west);

    \draw[-] ($ (M1.south) + (0,0) $) -- ($ (M2.north) + (0,0) $) node[midway, right, xshift=0.1cm] (T1) {(Thm.~\ref{thm:regnearbest})};;

    \draw[arrow] ($ (M2.south) + (-1cm,0) $) -- ($ (B1.north) + (1cm,0) $);
    \draw[arrow] ($ (M2.south) + (1cm,0) $) -- ($ (B2.north) + (-0.4cm,0) $);

    \coordinate (line-text-right) at ($ (M1.south) + (1.5cm,-1.1cm) $);

    \node[draw, dashed, inner sep=1em, line width=0.1pt, fit=(L1) (L2) (M1) (line-text-right) (M2) (T1)] (upperbox) {};
    \node[draw, dashed, inner sep=1em, line width=0.1pt, fit=(B1) (B2) (M2)] (lowerbox) {};

    \node[anchor=north east, font=\small] at (upperbox.north east) {\textit{Section~\ref{sec:bestapprox}: $L^2_\rho$-approximation}};

    \node[anchor=north east, font=\small, align=right] at (lowerbox.north east) {\textit{Section~\ref{sec:discretization}: Discrete} \\ \textit{approximation}};
    \end{tikzpicture}%
    }
    \caption{Logical dependencies among the main results of Part I.}
    \label{fig:diagram}
\end{figure}

\section{Notation} \label{sec:notation}
For vectors and matrices, $\|\cdot\|_2$ denotes the Euclidean and spectral norms, respectively, and $\|\cdot\|_F$ denotes the Frobenius norm. For Hermitian matrices $A$ and $B$, we write $A \preceq B$ to denote the Loewner order, meaning that $B-A$ is positive semidefinite. Throughout the paper, $X$ will be a measurable subset of $\mathbb{R}^d$, where $d \in \mathbb{N}_0$. For functions defined on $X$, $\|\cdot\|_{L^2_\rho}$ denotes the $L^2(X,\rho)$-norm with respect to a prescribed non-atomic probability measure $\rho$ on $X$ (so that $\rho(X) = 1$), with associated inner product
\[
    \langle f, g \rangle_{L^2_\rho} = \int_X \overline{f} \, g \, d\rho.
\]
Whenever $\rho$ is the Lebesgue measure, we write $L^2(X,\rho) = L^2(X)$ and $\|\cdot\|_{L^2_\rho} = \|\cdot\|_{L^2}$. 

We let $\Phi = \{\phi_1,\dots,\phi_n\} \subset L^2(X,\rho)$ be a finite set of functions. For brevity and convenience, we refer to $\Phi$ as a basis, but we don't assume linear independence. We denote by $\hat{n}$ the dimension of $\SPAN(\Phi)$, which satisfies $\hat n \le n$. Associated to $\Phi$ is the finite-rank, bounded, linear \textit{synthesis operator}
\[
    \mathcal{T} \from \mathbb{C}^n \to L^2(X,\rho), \qquad c \mapsto \sum_{i=1}^n c_i \phi_i.
\]
We define
\[
    \|\mathcal{T}\|_{2,L^2_\rho} = \sup_{\substack{c \in \mathbb{C}^n \\ c \neq 0}} \frac{\|\mathcal{T}c\|_{L^2_\rho}}{\|c\|_2}.
\]
The symbol $^\dagger$ denotes the Moore-Penrose inverse and we write
\[
\kappa(A) = \|A\|_2\|A^\dagger\|_2,
\qquad
\kappa(\mathcal{T}) = \|\mathcal{T}\|_{2,L^2_\rho}\|\mathcal{T}^\dagger\|_{2,L^2_\rho}.
\]
We refer to the matrix $G \in \mathbb{C}^{n \times n}$ as the Gram matrix representing the operator $\mathcal{T}^*\mathcal{T}$, so that 
\[
    (G)_{i,j} = \langle \phi_i, \phi_j \rangle_{L^2_\rho}.
\]

Throughout, we denote by $\fl \from \mathbb{C} \to \mathbb{C}$ rounding to floating-point arithmetic, satisfying
\begin{equation}\label{eq:rounding}
    \forall y \in \mathbb{C},\; \exists \delta \in \mathbb{C} \text{ with } |\delta| \le u: \quad \fl(y) = y(1+\delta),
\end{equation}
where $u > 0$ is the unit roundoff. For simplicity, we neglect errors due to under- or overflow. If other assumptions are made about the operator $\fl$, these will be made explicit. For a given matrix $A \in \mathbb{C}^{m \times n}$ and vector $b \in \mathbb{C}^n$, by $\fl(A)$ and $\fl(b)$ we mean the matrix and vector obtained by applying $\fl(\cdot)$ entry-wise. For a given function $v \in L^2(X,\rho)$, finite-precision evaluation is denoted by $\fl(v) = \fl \circ \, v$. Note that due to~\eqref{eq:rounding}, $\fl(v)\in L^2(X,\rho)$ as well. Furthermore, we define the \textit{numerical synthesis operator} associated with $\Phi$ as
\[
    \fl(\mathcal{T}) \from
    \mathbb{C}^n \to L^2(X,\rho),
    \qquad c \mapsto \sum_{i=1}^n c_i \fl(\phi_i).
\]
Its range equals $\SPAN(\{\fl(\phi_i)\}_{i=1}^n)$ and is called the \textit{numerical span} of $\Phi$\footnote{One might argue that any definition of numerical span should also account for the finite-precision representation of the expansion coefficients. However, this has no significant effect, as it merely induces a slight renormalization of the functions in $\Phi$.}.

\section*{Part I: Approximation in finite precision}

\section{Characterization of (near-)best numerical approximation} \label{sec:bestapprox} 
In this section, we analyze the properties of the numerical span in order to define the notion of near-best numerical approximation. The derived bounds show that the norm of the expansion coefficients plays a crucial role in the analysis of the numerical error. While upper bounding the numerical error is straightforward using a standard model for finite-precision rounding, obtaining a lower bound is much more involved. We conclude the section by showing that regularized approximation yields near-best numerical errors, which theoretically justifies its use in the presence of numerical rank-deficiency.

\subsection{Upper bound on the numerical error}\label{sec:upperbound}
As a reference and for later use, we first consider a fully discrete setting. Let $A \in \mathbb{C}^{m \times n}$ and $b \in \mathbb{C}^m$. For any coefficient vector $c \in \mathbb{C}^n$, the numerical residual can be decomposed as
\[
    \underbrace{\fl(A)c - \fl(b)}_{\text{numerical residual}}
    =
    \underbrace{Ac - b\vphantom{\fl(A)}}_{\text{residual}}
    +
    \underbrace{(\fl(A)-A)c}_{\text{rounding of $A$}}
    +
    \underbrace{\fl(b)-b}_{\text{rounding of $b$}}.
\]
This decomposition separates the error that comes from the approximation problem itself and the error that is purely due to finite-precision rounding of $A$ and $b$.

\begin{proposition}\label{prop:discrete-upper}
For any $A \in \mathbb{C}^{m \times n}$, $b \in \mathbb{C}^m$ and $c \in \mathbb{C}^n$, we have
\begin{equation}\label{eq:discrete-upper}
    \norm{\fl(A)c - \fl(b)}_2
    \le
    \norm{Ac - b}_2
    +
    u\left(\sqrt{n}\norm{A}_2\norm{c}_2 + \norm{b}_2\right).
\end{equation}
\end{proposition}

\begin{proof}
Using the triangle inequality on the decomposition above, we get
\[
    \norm{\fl(A)c - \fl(b)}_2 \le \norm{Ac - b}_2 + \norm{(\fl(A)-A)c}_2 + \norm{\fl(b)-b}_2.
\]
By the floating-point model \eqref{eq:rounding}, we have
$\norm{\fl(b)-b}_2 \le u \norm{b}_2$ and
$\norm{(\fl(A)-A)c}_2 \le \norm{\fl(A)-A}_F \norm{c}_2 \le u \|A\|_F \norm{c}_2 \le u \sqrt{n} \norm{A}_2 \norm{c}_2$,
which yields the stated bound \eqref{eq:discrete-upper}.
\end{proof}

A particularly important case is the minimum-norm least squares solution
$\hat{c} = A^\dagger b$. Using $\norm{A^\dagger b}_2 \le \norm{A^\dagger}_2 \norm{b}_2$, Prop.~\ref{prop:discrete-upper} implies
\begin{equation}\label{eq:discrete-upper-pinv}
    \norm{\fl(A)\hat{c} - \fl(b)}_2
    \le
    \underbrace{\norm{AA^\dagger b-b}_2}_{\text{best residual}}
    +
    u(\sqrt{n}\kappa(A)+1)\norm{b}_2.
\end{equation}
When $\kappa(A) \geq 1/u$, this bound no longer guarantees any accuracy of the numerical approximation; rounding errors completely perturb certain directions in the range of $A$. In low rank matrix theory, such matrices are said to be numerically rank-deficient~\cite{golubRosetakDocumentRank1977}.

\begin{definition} \label{def:numrankdefdisc}
A matrix $A$ is said to be \emph{numerically rank-deficient} if $\kappa(A) \ge 1/u$.
\end{definition}

We can formulate an analogous result without discretization. Namely, the numerical error admits the same decomposition as in the discrete case:
\[
    \underbrace{\fl(\mathcal{T})c - \fl(f)}_{\text{numerical error}}
    =
    \underbrace{\mathcal{T}c - f \vphantom{\fl(\mathcal{T})}}_{\text{approximation error}}
    +
    \underbrace{(\fl(\mathcal{T})-\mathcal{T})c}_{\text{rounding of $\Phi$}}
    +
    \underbrace{\fl(f)-f}_{\text{rounding of $f$}}.
\]

\begin{proposition}\label{prop:continuous-upper}
For any $\Phi \subset L^2(X,\rho)$, $f \in L^2(X,\rho)$ and $c \in \mathbb{C}^n$, we have
\begin{equation}\label{eq:continuous-upper}
    \norm{\fl(\mathcal{T})c - \fl(f)}_{L^2_\rho}
    \le
    \norm{\mathcal{T}c - f}_{L^2_\rho}
    +
    u\left(\sqrt{n}\norm{\mathcal{T}}_{2,L^2_\rho}\norm{c}_2 + \norm{f}_{L^2_\rho}\right).
\end{equation}
\end{proposition}
\begin{proof}
Applying the triangle inequality to the decomposition above, we obtain
\[
    \norm{\fl(\mathcal{T})c - \fl(f)}_{L^2_\rho} \le \norm{\mathcal{T}c - f}_{L^2_\rho} + \norm{(\fl(\mathcal{T})-\mathcal{T})c}_{L^2_\rho} + \norm{\fl(f)-f}_{L^2_\rho}.
\]
By the floating-point model \eqref{eq:rounding}, we have $\|\fl(f)-f\|_{L^2_\rho} \le u \|f\|_{L^2_\rho}$ and $$\|(\fl(\mathcal{T})-\mathcal{T})c\|_{L^2_\rho} \le \sqrt{n} u \|\mathcal{T}\|_{2,L^2_\rho} \|c\|_2,$$ which yields the bound~\eqref{eq:continuous-upper}.
\end{proof}
\begin{remark}
    The factor $\norm{\mathcal{T}}_{2,L^2_\rho}$ naturally appears in this bound and, as a consequence, in other bounds throughout the paper. Note that it can easily be controlled, that is, it is good practice to use normalized basis functions $\|\phi_i\|_{L^2_\rho} = 1$ so that $\norm{\mathcal{T}}_{2,L^2_\rho} = 1$.
\end{remark}

For the minimum-norm least squares solution $\hat{c} = \mathcal{T}^\dagger f$, we obtain
\begin{equation}\label{eq:continuous-upper-pinv}
    \norm{\fl(\mathcal{T})\hat{c} - \fl(f)}_{L^2_\rho}
    \le
    \underbrace{\norm{\mathcal{T}\mathcal{T}^\dagger f-f}_{L^2_\rho}}_{\text{best approximation error}}
    +
    u(\sqrt{n}\kappa(\mathcal{T})+1)\norm{f}_{L^2_\rho}.
\end{equation}
Hence, for orthonormal bases where $\kappa(\mathcal{T}) = 1$, the numerical error is close to the best approximation error. However, this is not the case when $\kappa(\mathcal{T})$ is large. Similarly to the discrete case, we define:

\begin{definition} \label{def:numrankdef}
A set of functions $\Phi = \{\phi_i\}_{i=1}^n$ is said to be \emph{numerically rank-deficient} if its synthesis operator satisfies $\kappa(\mathcal{T}) \ge 1/u$.
\end{definition}

\subsection{Lower bound on the numerical error using a stochastic model}\label{sec:lowerbound}
In order to obtain a lower bound on the numerical error, additional information on the rounding errors is required beyond that provided by~\eqref{eq:rounding}. Indeed, if all rounding errors were zero, the numerical error would coincide exactly with the approximation error. In this subsection, we introduce two ingredients that together enable the derivation of such a lower bound.
\begin{itemize}
    \item We develop a novel analysis to derive a lower bound on the perturbation of each basis function, assuming continuity and a nontrivial range.
    \item We model the rounding errors associated with different basis functions as uncorrelated random variables, allowing us to lower bound the expected error in the numerical span.
\end{itemize}
Importantly, the numerical span is invariant under changes of basis by linear recombination of the rounded functions, such as numerical orthogonalization. Consequently, the derived lower bound has implications for a broad class of methods. By assuming that an orthonormal basis of the exact span is available or can be computed, this point is frequently overlooked in the literature.

\subsubsection{Lower bound on the perturbation of a function}

Since any set of floating-point numbers is discrete, continuously varying functions are necessarily perturbed when represented in floating-point arithmetic. This can be formalized into a lower bound on the perturbation of a function. For simplicity, we consider real-valued univariate functions and restrict to the Lebesgue measure, i.e., $L^2(X)$-bounds. However, similar ideas hold for complex-valued, multivariate functions and other non-atomic measures $\rho$.

In what follows, we'll require that a bound like~\eqref{eq:deterlowerboundpert} holds for any basis function $\phi_i \in \Phi$ with a constant $C > 0$ independent of $i$. Moreover, in order to characterize convergence in a sequence of approximation spaces, the same should hold for an infinite sequence of functions $\{\phi_i\}_{i=1}^\infty$. A key ingredient in obtaining such a bound for a broad class of functions is the use of a non-injective change-of-variables~\cite[Theorem 1.16-2]{ciarlet2025linear}, which takes into account the cardinality of the range of $\phi_i$.

We restrict to round-to-nearest floating point systems. That is, there exists a finite set of representables $\mathbb{F} \subset \mathbb{R}$ such that $\forall r \in \mathbb{F}$:
\begin{equation} \label{eq:roundtonearest}
    \forall y \in I_r = \{ y \, : \, \lvert y - r \rvert < u \lvert r \rvert \}: \qquad \fl(y) = r
\end{equation}
where $u > 0$ is the unit roundoff. Restricting to a round-to-nearest strategy entails no loss of generality, since other rounding schemes, such as directed or stochastic rounding, necessarily introduce larger rounding errors. Note that the lower bound derived in the theorem below holds irrespective of the rounding strategy used at the borders of the cells $I_r$. Furthermore, under- and overflow is implicitly taken into account in this model, as we do not require that $\bigcup_{r \in \mathbb{F}} \overline{I_r} = \mathbb{R}$.

\begin{theorem} \label{thm:determlowerbound}
Let $\phi \in C^1(X)$, where $X \subseteq \mathbb{R}$, and let $\mathrm{fl}:\mathbb{R}\to\mathbb{R}$ be a floating-point format that satisfies~\eqref{eq:roundtonearest}. Assume that there exists an open and connected interval $J$ such that
\[
    J \subseteq \phi(X) \cap \bigcup_{r \in \mathbb{F}} \overline{I_r}.
\]
Furthermore, assume that $m=\inf_{y\in J}|y|>0$, $\lvert J \rvert \geq 4 u M$ with $M=\sup_{y\in J}|y|$ and $L = \sup_{x \in \phi^{-1}(J)} \lvert \phi'(x)\rvert < \infty$. Then, there exists a constant $C > 0$ (independent of $u$) such that \sloppy
\begin{equation} \label{eq:deterlowerboundpert}
    \norm{ \fl(\phi)-\phi}_{L^2(X)}^2 \ge \norm{ \fl(\phi)-\phi}_{L^2(\phi^{-1}(J))}^2 \geq C u^2,
\end{equation}
where $\phi^{-1}(\cdot)$ denotes the pre-image. In particular, one may take 
\[
    C = \frac{\lvert J \rvert}{6 L}\,\frac{m^{3}}{M} \min_{y \in J} \text{card}(\phi^{-1}(y))
\]
where $\text{card}(\phi^{-1}(y))$ is the cardinal of the set $\phi^{-1}(y)$.
\end{theorem}
\begin{proof}
Fix a full cell $I_r\subset J$ and define $\Delta_r = u \lvert r \rvert$ and
$A_r=\phi^{-1}(I_r)$. By~\eqref{eq:roundtonearest}, $\mathrm{fl}(\phi(x))=r$ on $A_r$, so
\[
    \int_{A_r}\bigl(\mathrm{fl}(\phi(x))-\phi(x)\bigr)^2\,dx=\int_{A_r}(r-\phi(x))^2\,dx \geq \frac{1}{L}\int_{A_r} (r-\phi(x))^2 \lvert \phi'(x)\rvert \,dx,
\]
where the last inequality is due to $|\phi'(x)|\le L, x\in\phi^{-1}(J)$. Applying~\cite[Theorem 1.16-2]{ciarlet2025linear} gives
\[
    \frac{1}{L} \int_{A_r}(r-\phi(x))^2\,|\phi'(x)|\,dx\ = \frac{1}{L} \int_{I_r}(r-y)^2 \text{card}(\phi^{-1}(y)) \,dy \geq \frac{\min_{y \in J} \text{card}(\phi^{-1}(y))}{L} \int_{I_r}(r-y)^2 \,dy.
\]
The integral equals
\[
    \int_{I_r}(r-y)^2 \,dy = \int_{-\Delta_r}^{\Delta_r} z^2 \, dz = \frac{2}{3} \Delta_r^3 \geq \frac{2}{3} u^3 m^3,
\]
using the fact that $\Delta_r = u \lvert r \rvert \geq u m$ for all cells $I_r \subset J$.
Moreover, since the width of each cell $I_r \subset J$ satisfies $2\Delta_r \leq 2uM$, we know that $\Big\lfloor\frac{\lvert J \rvert}{2u\,M}\Big\rfloor-1\ \ge\ \frac{\lvert J \rvert}{4u\,M}$ (since $\lvert J \rvert\ge 4uM$) is a lower bound on the number of full cells in $J$. Summing over all cells, we obtain
\[
    \int_{X}\bigl(\mathrm{fl}(\phi)-\phi\bigr)^2\,dx \geq \frac{\lvert J \rvert}{4uM}\frac{\min_{y \in J} \text{card}(\phi^{-1}(y))}{L} \frac{2}{3}u^3m^3 = \frac{\lvert J \rvert}{6L}\frac{m^{3}}{M} \min_{y \in J} \text{card}(\phi^{-1}(y)) u^{2}.\qedhere
\]
\end{proof}

This theorem provides an explicit lower bound on the magnitude of the perturbation in $L^2(X)$ for a broad class of functions. For example, consider $\phi(x) = x$ on $X = [0,1]$, which varies continuously from $0$ to $1$ over its domain. For the choice $J = (0.5,1) \subset \phi([0,1])$, the bound yields 
\[
    \| \mathrm{fl}(\phi)-\phi\|_{L^2(X)}^2 \ge u^2/96.
\]
Consider now $\phi_k(x) = \cos(k \pi x)$ on $X = [0,1]$ for $k = 1,2,\dots$ and both $J^- = (-1,-0.5)$ and $J^+ = (0.5,1)$. We get 
\begin{align*}
    \| \mathrm{fl}(\phi)-\phi\|_{L^2(X)}^2 &\ge \| \mathrm{fl}(\phi)-\phi\|_{L^2(\phi_k^{-1}(J^-))}^2 + \| \mathrm{fl}(\phi)-\phi\|^2_{L^2(\phi_k^{-1}(J^+))} \\ &\geq u^2 /(96\pi)
\end{align*}
using $L \leq k \pi$ and $\min_{y \in J^-} \text{card}(\phi^{-1}(y)) + \min_{y \in J^+} \text{card}(\phi^{-1}(y)) = k$. As a result, we obtain a lower bound that is independent of $k$.

An important exception is that of functions with a trivial range. For instance, for $\phi(x) = 1$ the perturbation is identically zero. In contrast, although not covered by the theorem, the perturbation associated with $\phi(x) = \sqrt{2}$ is typically nonzero. Nevertheless, we expect the contribution of one such basis function to be negligible and the effects described in the next section to remain relevant when other basis functions are perturbed.

The bound applies only to one-dimensional real-valued differentiable functions and the unweighted least squares norm. However, we hope that the central message of the derivation is clear: numerical errors are unavoidable when a continuously varying function is rounded to the discrete set of finite-precision numbers.

\subsubsection{Probabilistic model for approximation in the numerical span}\label{sec:probabilisticmodel}

In this section, we study the approximation power of linear combinations of perturbed basis functions, i.e., the numerical span. Even though we have established that the perturbations are nonzero for a broad class of functions, constructing a realistic model for approximation in the numerical span is subtle. Deterministically, one cannot exclude pathological scenarios; for example, the perturbations of the basis functions may accidentally lie in the exact span, so that the span is invariant under rounding. In an even more extreme case, the perturbations could improve the approximation of the target function, leading to a numerical error smaller than the approximation error.

In practice, such situations are highly unlikely, as they would require that the rounding perturbations associated with distinct basis functions are correlated with one another or with the target function. To model this lack of correlation, we adopt a stochastic framework with minimal assumptions. Note that a similar stochastic approach to modeling finite-precision arithmetic has appeared in~\cite{higham2019new}. In what follows, each random function $\delta_i$ is a model for the behaviour of the (deterministic) relative perturbation of $\phi_i$ in finite-precision arithmetic.

\begin{model}[Stochastic perturbation model]\label{model}
Let $(\Omega,\mathcal F,\mathbb P)$ be a probability space and 
let $\{\delta_i\}_{i}$ be a countable set of random functions $\delta_i \in L^2(\Omega;L^\infty(X))$\footnote{This means that each $\delta_i : \Omega \to L^\infty(X)$ is Bochner square-integrable, i.e., $\EXP\left[ \norm{\delta_i}_{L^\infty}^2 \right] < \infty$. Throughout the paper we suppress the dependence on $\omega\in\Omega$ and write $\delta_i(x)$ instead of $\delta_i(\omega)(x)$.}.
\begin{enumerate}
    \item \textbf{Zero mean.} For every $x\in X$ and every $i$,
    \[
        \EXP\left[\delta_i(x)\right] = 0.
    \]

    \item \textbf{Uncorrelatedness.} For every $x\in X$ and all $i\neq j$,
    \[
        \EXP\left[\overline{\delta_i(x)}\,\delta_j(x)\right] = 0.
    \]
\end{enumerate}
\end{model}

\begin{remark}
    Our setting differs from that of~\cite{higham2019new} in several ways. First, the role of stochasticity is different: in~\cite{higham2019new} it is introduced to sharpen constants in upper bounds, whereas in our case it is essential for obtaining a meaningful lower bound. Second, the analysis in~\cite{higham2019new} models rounding errors arising from operations on fully discrete quantities, while we consider rounding of functions. Finally,~\cite[Model 2.1]{higham2019new} assumes independence of all rounding errors, whereas we only require that perturbations associated with distinct functions are uncorrelated, which is a very mild assumption.
\end{remark}

The following theorem uses this model to derive a lower bound on the expected error in the numerical span. The numerical error is expected to be larger than the approximation error by a term proportional to the size of the expansion coefficients and the unit roundoff. This mirrors the upper bound of Prop.~\ref{prop:continuous-upper}. For simplicity, and in line with the derivations in \S\ref{sec:upperbound}, we neglect rounding of $f$ as its influence is insignificant. Observe that condition~\eqref{eq:expcondition} reflects the deterministic lower bound of Thm.~\ref{thm:determlowerbound}.
\begin{theorem} \label{thm:lowerbound}
    Let $\{\delta_i\}_{i=1}^n$ satisfy the conditions of Model~\ref{model} and define $\widetilde{\Phi} = \{\widetilde{\phi}_i\}_{i=1}^n$ via
    \[
        \widetilde{\phi}_i = \phi_i (1+\delta_i), 
    \]
    where $\Phi = \{\phi_i\}_{i=1}^n \subset L^2(X,\rho)$. Furthermore, assume that there exists a constant $C > 0$ such that
    \begin{equation} \label{eq:expcondition}
        \EXP \left[ \norm{\widetilde{\phi_i} - \phi_i}^2_{L^2_\rho} \right] \geq C u^2, \qquad 1 \leq i \leq n.
    \end{equation}
    Then, for every $c \in \mathbb{C}^n$ and $f \in L^2(X,\rho)$, it holds that
    \begin{equation} \label{eq:thm8}
        \EXP\left[\|\widetilde{\mathcal{T}}c - f\|_{L^2_\rho}^2\right] \geq \|\mathcal{T}c-f\|_{L^2_\rho}^2 + C u^2 \|c\|^2_2,
    \end{equation}
    where $\widetilde{\mathcal{T}} \, : \, c \mapsto \sum_{i=1}^n c_i \widetilde{\phi}_i$ is the synthesis operator associated with $\widetilde{\Phi}$.
\end{theorem}
\begin{proof}
    Due to $\delta_i \in L^2(\Omega,L^\infty(X))$ and $\phi_i \in L^2(X,\rho)$, it holds that $\widetilde{\mathcal{T}} \from \mathbb{C}^n \to L^2(\Omega, L^2(X,\rho))$, so that we can consider
    \begin{align*}
        \EXP\left[\|\widetilde{\mathcal{T}}c - f\|_{L^2_\rho}^2\right] &= \EXP\left[\|\mathcal{T}c-f + (\widetilde{\mathcal{T}} - \mathcal{T}) c \|_{L^2_\rho}^2\right] \\
        &= \EXP\left[\|\mathcal{T}c-f\|_{L^2_\rho}^2 + 2 \REAL \left(\langle \mathcal{T}c-f, (\widetilde{\mathcal{T}} - \mathcal{T}) c\rangle_{L^2_\rho}\right) + \|(\widetilde{\mathcal{T}} - \mathcal{T}) c \|_{L^2_\rho}^2 \right] \\
        &= \|\mathcal{T}c-f\|_{L^2_\rho}^2 + 2 \REAL \left(\langle \mathcal{T}c-f, \EXP\left[(\widetilde{\mathcal{T}} - \mathcal{T}) c \right] \rangle_{L^2_\rho}\right) + \EXP\left[\|(\widetilde{\mathcal{T}} - \mathcal{T}) c \|_{L^2_\rho}^2\right] \\
        &= \|\mathcal{T}c-f\|_{L^2_\rho}^2 + \EXP\left[\|(\widetilde{\mathcal{T}} - \mathcal{T}) c \|_{L^2_\rho}^2\right]
    \end{align*}
    where in the last step we use that $\EXP\left[(\widetilde{\mathcal{T}} - \mathcal{T}) c\right] = 0$ due to the first property of Model~\ref{model}. Furthermore, 
    \begin{align*}
        \EXP\left[\|(\widetilde{\mathcal{T}} - \mathcal{T}) c \|_{L^2_\rho}^2\right] &= \EXP\left[\int_X \left| \sum_{i=1}^n c_i \phi_i(x) \delta_i(x) \right|^2 d\rho \right] \\
        &= \EXP\left[\sum_{i=1}^n \lvert c_i\rvert^2 \int_X \lvert \phi_i(x) \delta_i(x)\rvert^2 d\rho + \sum_{i=1}^{n} \sum_{\substack{j=1\\j \neq i}}^{n} \, \overline{c_i} c_j \int_X \overline{\phi_i(x)} \overline{\delta_i(x)} \phi_j(x) \delta_j(x) d\rho \right] \\
        &= \sum_{i=1}^n \lvert c_i\rvert^2 \EXP\left[\int_X \lvert\phi_i(x) \delta_i(x)\rvert^2 d\rho\right] + \sum_{i=1}^{n} \sum_{\substack{j=1\\j \neq i}}^{n} \, \overline{c_i} c_j \int_X \overline{\phi_i(x)} \phi_j(x) \EXP\left[\overline{\delta_i(x)} \delta_j(x)\right] d\rho \\
        &\geq Cu^2\|c\|_2^2,
    \end{align*}
    due to the second property of Model~\ref{model} and due to~\eqref{eq:expcondition}.
\end{proof}

As before in Section~\ref{sec:upperbound}, of particular interest is the minimum-norm least squares solution $\hat{c} = \mathcal{T}^\dagger f$:
\begin{equation} \label{eq:numerrorbestapprox}
    \EXP(\|\widetilde{\mathcal{T}}\hat{c} - f\|_{L^2_\rho}^2) \geq \underbrace{\|\mathcal{T}\mathcal{T}^\dagger f-f\|_{L^2_\rho}^2}_{\text{best approximation error}} + C u^2 \|\mathcal{T}^\dagger f\|^2_2.
\end{equation}
Even though this choice minimizes the approximation error in the lower bound, the expansion coefficients $\|\hat{c}\|_2^2$ can grow very large—of the order of $1/u^2$—when $\Phi$ is numerically rank-deficient. Consequently, even disregarding computability considerations, the coefficients of the best approximation in exact arithmetic generally have little relevance in the presence of numerical rank-deficiency.

\subsubsection{Accuracy barrier and numerical orthogonalization}

The result of Thm.~\ref{thm:lowerbound} allows us to think about the best possible error in the numerical span. We find that for solutions with a small numerical error to exist, there must exist coefficients with both a small approximation error, $\|\mathcal{T}c-f\|_{L^2_\rho}$, and small norm, $u\sqrt{C}\|c\|_2$.

\begin{corollary} \label{corr:accuracy barrier}
    Given the setting of Thm.~\ref{thm:lowerbound}, the best root mean square error of $f$ in $\SPAN(\widetilde{\Phi})$ satisfies
    \[
        \inf_{v \in \SPAN(\widetilde{\Phi})} \sqrt{\EXP\|v - f\|_{L^2_\rho}^2} \geq \inf_{c \in \mathbb{C}^n} \frac{1}{\sqrt{2}} \left( \|\mathcal{T}c-f\|_{L^2_\rho} + u \sqrt{C} \|c\|_2 \right).
    \]
\end{corollary}
\begin{proof}
    This follows immediately from taking the square root and the infimum over $c \in \mathbb{C}^n$ on both sides of~\eqref{eq:thm8}, and using $\sqrt{x^2 + y^2} \geq (x+y)/\sqrt{2}$ for $x,y > 0$.
\end{proof}

An important observation is that this accuracy barrier cannot be mitigated through a change-of-basis by recombination of the rounded functions. Indeed, if one were to define a new set of basis functions $\{q_i\}_{i=1}^n$, each one being a linear combination of the old basis functions, that is, they are evaluated via
\[
    q_i(x) = \sum_{j=1}^n a_{i,j} \fl(\phi_j(x)), \qquad a_{i,j}\in\mathbb{C},
\]
then $\SPAN(\{q_i\}_{i=1}^n) \subseteq \SPAN(\{\fl(\phi_i)\}_{i=1}^n)$. Therefore, one cannot expect higher accuracy of the numerical approximation, irrespective of the properties of $\{q_i\}_{i=1}^n$. In particular, straightforward numerical orthogonalization cannot mitigate the effects of finite-precision arithmetic described in this paper. 

\subsection{Near-best numerical approximation and regularization} \label{sec:nearbestreg}
Consider an infinite sequence of functions $\{\phi_i\}_{i=1}^\infty$ and, for each $n \in \mathbb{N}$, let $\mathcal{T}_n \from \mathbb{C}^n \to L^2(X,\rho)$ denote the synthesis operator associated with the finite set $\{\phi_i\}_{i=1}^n$. We assume that each function $\phi_i$ varies continuously, in such a way that finite-precision rounding satisfies
\begin{equation} \label{eq:lowerboundsequence}
    \exists C > 0: \qquad \norm{ \fl(\phi_i) - \phi_i }_{L^2_\rho}^2 \geq C u^2, \qquad 1 \leq i < \infty,
\end{equation}
as motivated by the results of Thm.~\ref{thm:determlowerbound}. Building on the probabilistic modeling results of Corr.~\ref{corr:accuracy barrier}, it is then expected that
\begin{equation} \label{eq:nearbestassumption}
    \exists C_1, C_2 > 0: \qquad \inf_{c \in \mathbb{C}^n}  \|\fl(\mathcal{T}_n) c - f \|_{L^2_\rho} \geq C_1 \inf_{c \in \mathbb{C}^n}  \left( \|\mathcal{T}_n c-f\|_{L^2_\rho} + u C_2 \|c\|_2 \right)
\end{equation}
for each $n \in \mathbb{N}$ and for all $f \in L^2(X,\rho)$. 

In this setting, and in light of Prop.~\ref{prop:continuous-upper}, we define the notion of near-best numerical approximation. Notably, this definition does not require that the functions $\phi_i$ are numerically rank-deficient. Nevertheless, it is in such settings that the penalty proportional to the norm of the expansion coefficients plays a substantial role.

\begin{definition} \label{def:nearbest}
    Let $\{\phi_i\}_{i=1}^\infty \subset L^2(X,\rho)$ and $\fl \from \mathbb{C} \to \mathbb{C}$ satisfy~\eqref{eq:nearbestassumption}. An approximation procedure $\{\mathcal{A}_n\}_{n \in \mathbb{N}}$, with $$\mathcal{A}_n \from L^2(X,\rho) \to \mathbb{C}^n, $$
    is said to yield \emph{near-best numerical approximations} if there exist constants $A_1, A_2 > 0$ such that
    \begin{equation} \label{eq:nearbest}
        \|\fl(\mathcal{T}_n) \mathcal{A}_n(f) - \fl(f)\|_{L^2_\rho} \leq A_1  \inf_{c \in \mathbb{C}^n} \left(\|\mathcal{T}_nc - f\|_{L^2_\rho} + u A_2 \sqrt{n} \|\mathcal{T}_n\|_{2,L^2_\rho} \|c\|_2 \right) + u \|f\|_{L^2_\rho},
    \end{equation}
    for each $n \in \mathbb{N}$ and for all $f \in L^2(X,\rho)$.
\end{definition}

Here, we permit $\mathcal{A}_n \from L^2(X,\rho) \to \mathbb{C}^n$. For the method to be numerically realizable, however, one must account for rounding errors—both in the input data and those induced by the algorithm—as well as the fact that a computer operates on discrete information. These issues will be addressed in Section~\ref{sec:discretization}. We first analyze which idealized procedures yield near-best numerical approximations.

Since rounding errors are amplified by large expansion coefficients, it is natural to consider approximations that penalize the norm of the coefficients. This corresponds precisely to penalizing directions that are only weakly represented by $\Phi$. In the following theorem, it is shown that approximations that (nearly) minimize the \textit{$\ell^2$-regularized $L^2_\rho$-error}
\begin{equation} \label{eq:contl2regobj}
    E_\epsilon(c) = \|\mathcal{T} c - f\|_{L^2_\rho} + \epsilon \|c\|_2,
\end{equation}
yield near-best numerical errors\footnote{The unorthodox choice of symbol $\epsilon$ for the regularization parameter emphasizes that it is small and that regularization is purely due to numerical effects.}. This renders $\ell^2$-regularization with a regularization parameter of the order of the unit roundoff an important tool in the presence of numerical rank-deficiency.

\begin{theorem} \label{thm:regnearbest}
    Given $f \in L^2(X,\rho)$, let $\hat{c} \in \mathbb{C}^n$ satisfy
    \begin{equation} \label{eq:regnearbesteq1}
        E_\epsilon(\hat{c}) \leq C \inf_{c \in \mathbb{C}^n} E_\epsilon(c)
    \end{equation}
    for some $C \geq 1$ and $\epsilon \geq u \sqrt{n} \|\mathcal{T}\|_{2,L^2_\rho}$. Then,
    \[
        \|\fl(\mathcal{T}) \hat{c} - \fl(f)\|_{L^2_\rho} \leq C \inf_{c \in \mathbb{C}^n} \left(\|\mathcal{T} c - f\|_{L^2_\rho} + \epsilon \|c\|_2 \right) + u \|f\|_{L^2_\rho}.
    \]
\end{theorem}
\begin{proof}
    We have 
    \begin{align*}
        \|\fl(\mathcal{T})\hat{c} - \fl(f)\|_{L^2_\rho} &\leq \|\mathcal{T}\hat{c} - f\|_{L^2_\rho} + u(\sqrt{n} \|\mathcal{T}\|_{2,L^2_\rho} \|\hat{c}\|_2 + \|f\|_{L^2_\rho}) \quad &\text{(Prop.~\ref{prop:continuous-upper})} \\
        &\leq \|\mathcal{T}\hat{c} - f\|_{L^2_\rho} + \epsilon \|\hat{c}\|_2 + u \|f\|_{L^2_\rho} \quad &(\epsilon \geq u \sqrt{n}\|\mathcal{T}\|_{2,L^2_\rho}) \\
        &\leq C \inf_{c \in \mathbb{C}^n} (\|\mathcal{T}c - f\|_{L^2_\rho} + \epsilon \|c\|_2) + u \|f\|_{L^2_\rho} \quad &\text{\text{(using~\eqref{eq:regnearbesteq1})}}
    \end{align*}
\end{proof}

Regularization on the order of the unit roundoff is standard in a fully discrete, numerically rank-deficient setting, i.e., in low rank matrix theory. For completeness, we show that two popular methods indeed satisfy condition~\eqref{eq:regnearbesteq1}. 

\begin{proposition} \label{lm:regu}
    The Tikhonov regularized approximation
    \[
        \hat{c} = \argmin_{c\in\mathbb{C}^n} \|\mathcal{T} c - f\|_{L^2_\rho}^2 + \epsilon^2 \|c\|_2^2
    \]
    satisfies~\eqref{eq:regnearbesteq1} for $C = \sqrt{2}$. The truncated singular value decomposition (TSVD) approximation 
    \[
        \hat{c} = (\mathcal{T})_\epsilon^\dagger f,
    \]
    where $(\mathcal{T})_\epsilon ^\dagger$ denote the Moore-Penrose inverse of $\mathcal{T}$ after setting its singular values below $\epsilon$ to zero, satisfies~\eqref{eq:regnearbesteq1} for $C = 2$.
\end{proposition}
\begin{proof}
    The first result follows immediately from taking the square root on both sides of
    \[
        \|\mathcal{T} \hat{c} - f\|_{L^2_\rho}^2 + \epsilon^2 \|\hat{c}\|_2^2 \leq \inf_{c \in \mathbb{C}^n} \|\mathcal{T} c - f\|_{L^2_\rho}^2 + \epsilon^2 \|c\|_2^2,
    \]
    combined with $(x+y)/\sqrt{2} \leq \sqrt{x^2 + y^2} \leq x+y$, for all $x,y > 0$.
    For the second result, observe that
    \[
        \mathcal{T}(\mathcal{T}_\epsilon)^\dagger = \mathcal{T}_\epsilon(\mathcal{T}_\epsilon)^\dagger \qquad \text{and} \qquad (\mathcal{T}_\epsilon)^\dagger\mathcal{T} = (\mathcal{T}_\epsilon)^\dagger \mathcal{T}_\epsilon
    \]
    project orthogonally onto $\SPAN(\{u_i\}_{\sigma_i \geq \epsilon})$ and $\SPAN(\{v_i\}_{\sigma_i \geq \epsilon})$, respectively, where $\mathcal{T} = \sum_{i=1}^{\hat{n}} \sigma_i u_i v_i^*$ is the singular value decomposition of $\mathcal{T}$. Using $f = (f - \mathcal{T}c) + \mathcal{T}c$ for every $c \in \mathbb{C}^n$, we obtain 
    \begin{align*}
        \|f-\mathcal{T}\hat{c}\|_{L^2_\rho} = \|f - \mathcal{T}(\mathcal{T}_\epsilon)^\dagger f \|_{L^2_\rho} &= \|(\mathcal{I} - \mathcal{T}(\mathcal{T}_\epsilon)^\dagger) ((f - \mathcal{T}c) + \mathcal{T}c) \|_{L^2_\rho} \\
        &\leq \|\mathcal{I} - \mathcal{T}(\mathcal{T}_\epsilon)^\dagger \|_{L^2_\rho,L^2_\rho} \|f - \mathcal{T}c\|_{L^2_\rho} + \|\mathcal{T} - \mathcal{T}(\mathcal{T}_\epsilon)^\dagger \mathcal{T}\|_{2,L^2_\rho} \|c\|_2 \\
        &\leq \|f - \mathcal{T}c\|_{L^2_\rho} + \epsilon  \|c\|_2,
    \end{align*}
    where $\mathcal{I} \from L^2(X,\rho) \to L^2(X,\rho)$ denotes the identity operator. Similarly,
    \begin{align*}
        \epsilon \|\hat{c}\|_2 = \epsilon \|(\mathcal{T}_\epsilon)^\dagger f\|_{2} &= \epsilon \|(\mathcal{T}_\epsilon)^\dagger ((f - \mathcal{T}c) + \mathcal{T}c)\|_{2} \\
        &\leq \epsilon \|(\mathcal{T}_\epsilon)^\dagger\|_{L^2_\rho,2} \|f - \mathcal{T}c\|_{L^2_\rho} + \epsilon \|(\mathcal{T}_\epsilon)^\dagger \mathcal{T}\|_{2,2} \|c\|_2 \\
        &\leq \|f - \mathcal{T}c\|_{L^2_\rho} + \epsilon \|c\|_2.\qedhere
    \end{align*}
\end{proof}

Finally, we briefly compare our setting with the classical theory of inverse problems, where regularization is standard~\cite{vogel2002computational}. Although closely related, the emphases differ. In our setting, regularization is needed due to perturbations of the operator $\mathcal{T}$, while noise in the right-hand side is insignificant. Moreover, inverse problems aim to approximate an underlying solution $c_{\mathrm{true}}$, whereas our objective is to compute coefficients $c$ that yield a small numerical residual, without assigning intrinsic meaning to them. Accordingly, the regularization parameter in our analysis is dictated by the unit roundoff, in contrast to the classical setting, where it is selected to balance stability against data noise and thus depends on the (typically unknown) noise level. In that context, determining a suitable value of the regularization parameter is often crucial. In the context of numerical rank-deficiency, it is not.

\section{Discrete approximation with numerical rank-deficiency} \label{sec:discretization}
It follows from the previous section that appropriately regularized approximations achieve near-best numerical errors in the presence of numerical rank-deficiency. To obtain a computable approximation method, we still need to address two issues: discretization, discussed in Subsection~\ref{sec:discretizationcondition}, and rounding errors arising in the computation of the expansion coefficients, discussed in Subsection~\ref{sec:computation}. An important result is that, for regularized approximations, the discretization condition is relaxed compared to the unregularized setting. 

\subsection{Relaxed discretization condition} \label{sec:discretizationcondition}

We define a sampling operator
\begin{equation*}
    \mathcal{M} \from L^\infty(X) \to \mathbb{C}^m, \quad f \mapsto \{l_j(f)\}_{j=1}^m, 
\end{equation*}
consisting of $m$ linear functionals $l_j$. We allow that the sampling operator is only defined on $L^\infty(X)$ to accommodate for the important case of pointwise sampling, where
\begin{equation} \label{eq:pointwisesampling}
    \mathcal{M} f = \left\{\sqrt{w_j} f(x_{j})\right\}_{j=1}^m 
\end{equation}
for some points $x_{j} \in X$ and weights $w_j > 0$. We assume that $\mathcal{M}$ is bounded with respect to $\|\cdot\|_{L^\infty}$:
\[ 
    \|\mathcal{M}\|_{L^\infty,2} \coloneq \sup_{v \in L^\infty(X), v \neq 0} \frac{\|\mathcal{M} v\|_2}{\|v\|_{L^\infty}} < \infty.
\] 
Note that in the case of~\eqref{eq:pointwisesampling}, one has $\|\mathcal{M}\|_{L^\infty,2}^2 = \sum_{j=1}^m w_j$.
The operator $\mathcal{M}$ defines a seminorm via $\|\cdot\|_{\mathcal{M}} \coloneq \|\mathcal{M} \cdot \|_2$. Furthermore, we assume that $\Phi \subset L^\infty(X)$.

Following Thm.~\ref{thm:regnearbest}, approximations that (nearly) minimize the $\ell^2$-regularized $L^2_\rho$-error $E_\epsilon$ result in near-best numerical errors. A similar result holds after discretization, given that the discretization is sufficiently rich. In this case, the quantity of interest is the \textit{$\ell^2$-regularized discrete error}
\begin{equation} \label{eq:discl2regobj}
    e_\epsilon(c) = \|\mathcal{T} c - f\|_\mathcal{M} + \epsilon \|c\|_2.
\end{equation}

\begin{theorem} \label{thm:discreteerrorbound}
    Given $\Phi \subset L^\infty(X)$, let $\mathcal{M}\from L^\infty(X) \to \mathbb{C}^m$ satisfy
    \begin{equation} \label{thm:norminequality}
        \gammar \left( \|\mathcal{T}c\|_{L^2_\rho} + \epsilon \|c\|_2 \right) \leq \|\mathcal{T}c\|_{\mathcal{M}} + \epsilon \|c\|_2, \qquad \forall c \in \mathbb{C}^n,
    \end{equation}
    for some $0 < \gammar \leq 1$ and $\epsilon \geq u \sqrt{n} \|\mathcal{T}\|_{2,L^2_\rho}$. Furthermore, given $f \in L^\infty(X)$, let $\hat{c} \in \mathbb{C}^n$ satisfy
    \begin{equation} \label{thm:discreteregulariedcond}
        e_\epsilon(\hat{c}) \leq C \inf_{c \in \mathbb{C}^n} e_\epsilon(c)
    \end{equation}
    for some $C \geq 1$. Then
    \begin{align} \label{thm:finalerrorbound}
        \begin{split} 
            \|\fl(\mathcal{T})\hat{c} - \fl(f)\|_{L^2_\rho} &\leq \inf_{c \in \mathbb{C}^n} \left(E_\epsilon(c) + \frac{1+C}{\gammar} e_\epsilon(c)\right) + u\|f\|_{L^2_\rho} \\
            &\leq \inf_{c \in \mathbb{C}^n} \left( \left( 1 + \|\mathcal{M}\|_{L^\infty,2} \frac{1+C}{\gammar} \right) \|\mathcal{T}c-f\|_{L^\infty} + \left( 1 + \frac{1+C}{\gammar} \right) \epsilon \|c\|_2 \right) + u \|f\|_{L^2_\rho}.
        \end{split}
    \end{align}
\end{theorem}
\begin{proof}
    It holds that 
    \begin{align*}
        \|\fl(\mathcal{T})\hat{c} - \fl(f)\|_{L^2_\rho} &\leq \|\mathcal{T}\hat{c} - f\|_{L^2_\rho} + u(\sqrt{n}\|\mathcal{T}\|_{2,L^2_\rho} \|\hat{c}\|_2 + \|f\|_{L^2_\rho}) \quad &\text{(Prop.~\ref{prop:continuous-upper})} \\
        &\leq \|\mathcal{T}\hat{c} - f\|_{L^2_\rho} + \epsilon \|\hat{c}\|_2 + u \|f\|_{L^2_\rho} \quad &(\epsilon \geq u \sqrt{n} \|\mathcal{T}\|_{2,L^2_\rho}) \\
        &\leq \|\mathcal{T}(\hat{c}-c)\|_{L^2_\rho} + \epsilon \|\hat{c}-c\|_2 + E_\epsilon(c) + u \|f\|_{L^2_\rho} \quad &\text{($\triangle$-inequality)}
    \end{align*}
    for any $c \in \mathbb{C}^n$.
    Furthermore, using~\eqref{thm:norminequality} on the set of coefficients $\hat{c}-c\in\mathbb{C}^n$, we get
    \begin{align*}
        \|\mathcal{T}(\hat{c}-c)\|_{L^2_\rho} + \epsilon \|\hat{c}-c\|_2 &\leq \frac{1}{\gammar} \left( \|\mathcal{T}(\hat{c}-c)\|_{\mathcal{M}} + \epsilon \|\hat{c}-c\|_2 \right) \\ 
        &\leq \frac{1}{\gammar} \left( e_\epsilon(\hat{c}) + e_\epsilon(c) \right)\quad &\text{($\triangle$-inequality)} \\
        &\leq \frac{1+C}{\gammar} e_\epsilon(c) \quad &\text{\text{(using~\eqref{thm:discreteregulariedcond})}}
    \end{align*}
    The final inequality follows from $\|\mathcal{T}c-f\|_{L^2_\rho(X)} \leq \|\mathcal{T}c-f\|_{L^\infty(X)}$ (since $\rho(X) = 1$) and $\|\mathcal{T}c-f\|_{\mathcal{M}} \leq \|\mathcal{M}\|_{L^\infty,2}\|\mathcal{T}c-f\|_{L^\infty(X)}$.
\end{proof}

\begin{remark}
    After discretization, we limit ourselves to error bounds that depend on the optimal error measured in the $L^\infty(X)$-norm. Bounds in terms of the $L^2(X,\rho)$-norm are tighter, yet require additional effort. For instance, probabilistic error bounds of this kind can be obtained in the case of random pointwise sampling, as demonstrated in~\cite{cohen2017,dolbeault2022optimal}.
\end{remark}

We can compare this result to the standard setting of least squares fitting in exact arithmetic, see for example~\cite[Lemma 5.1]{adcockOptimalSamplingLeastsquares2024}. 

\begin{proposition} \label{prop:discreteerror}
    Given $\Phi \subset L^\infty(X)$, let $\mathcal{M}\from L^\infty(X) \to \mathbb{C}^m$ satisfy
    \begin{equation} \label{thm:analytical:norminequality}
        \gammae \|v\|_{L^2_\rho} \leq \|v\|_{\mathcal{M}}, \qquad \forall v \in \SPAN(\Phi),
    \end{equation}
    for some $0 < \gammae \leq 1$. Furthermore, given $f \in L^\infty(X)$, let $c \in \mathbb{C}^n$ satisfy
    \[
        \hat{c} = \argmin_{c \in \mathbb{C}^n} \|\mathcal{T}c - f\|_{\mathcal{M}}^2.
    \]
    Then,
    \begin{align}
        \begin{split}
            \|\mathcal{T}\hat{c} - f\|_{L^2_\rho} &\leq \inf_{v \in \SPAN(\Phi_n)} \left( \|f - v\|_{L^2_\rho} + \frac{1}{\gammae} \|f - v\|_{\mathcal{M}} \right) \\
            &\leq \left(1 + \frac{\|\mathcal{M}\|_{L^\infty,2}}{\gammae} \right) \inf_{v \in \SPAN(\Phi_n)} \|f-v\|_{L^\infty}.
        \end{split}
    \end{align}
\end{proposition}

\begin{remark}
The condition~\eqref{thm:analytical:norminequality} is related to (the lower bound of) sampling discretizations~\cite{kashin2022sampling} and Marcinkiewicz-Zygmund inequalities~\cite{temlyakov2018marcinkiewicz,grochenig2020sampling}, which are typically defined for a sequence of approximations spaces and sampling operators.
\end{remark} 

If one compares the conditions~\eqref{thm:norminequality} and~\eqref{thm:analytical:norminequality}, it immediately becomes clear that the regularized inequality depends on the basis $\Phi$, whereas the standard inequality only depends on the spanned space. On the other hand, both inequalities are equivalent whenever $\epsilon = 0$ and/or $\gamma = 1$. In the case where $\epsilon > 0$ and $\gamma < 1$,~\eqref{thm:norminequality} is a strict relaxation of~\eqref{thm:analytical:norminequality}. This follows from rewriting them as
\begin{align*}
    &\eqref{thm:norminequality}: \qquad  \gammar \|\mathcal{T} c\|_{L^2_\rho} \leq \|\mathcal{T} c\|_{\mathcal{M}} + (1 - \gammar) \epsilon \|c\|_2& \\
    &\eqref{thm:analytical:norminequality}: \qquad \gammae \|\mathcal{T} c\|_{L^2_\rho} \leq \|\mathcal{T} c\|_{\mathcal{M}}&
\end{align*}
Hence, the largest $\gammar$ and $\gammae$ satisfying~\eqref{thm:norminequality} and~\ref{thm:analytical:norminequality}, respectively, satisfy
\[
    \gammar \geq \gammae,
\]
meaning that less information may be needed for numerical approximation compared to approximation in exact arithmetic.

One might wonder whether the relaxation is significant for small $\epsilon \gtrsim u \sqrt{n} \|\mathcal{T}\|_{2,L^2_\rho}$. To investigate this question, we consider the singular value decomposition of $\mathcal{T}$:
\[
    \mathcal{T} = \sum_{i=1}^{\hat{n}} \sigma_i u_i v_i^*,
\]
where $\{u_i\}_{i=1}^{\hat{n}}$ is an orthonormal basis for $\SPAN(\Phi)$ and $\{v_i\}_{i=1}^{\hat{n}}$ is an orthonormal basis for $\mathbb{C}^{\hat{n}}$. Directions associated with small singular values are weakly represented in $\Phi$ and damped significantly by regularization. Therefore, less data is required from these directions.
\begin{proposition}
    For every $$c \in \SPAN(\{v_i \; \vert \; \sigma_i \leq (1/\gammar-1)\epsilon\}),$$ condition~\eqref{thm:norminequality} is satisfied, irrespective of the choice of sampling operator.
\end{proposition}
\begin{proof}
    One has
    \[
        \gammar \|\mathcal{T}c\|_{L^2_\rho} \leq \gammar(1/\gammar-1)\epsilon \|c\|_2 = (1-\gammar)\epsilon \|c\|_2,
    \]
    which implies that~\eqref{thm:norminequality} is satisfied.
\end{proof}

Whenever the basis is numerically rank-deficient in the sense of Def.~\ref{def:numrankdef}, at least one singular value satisfies $\sigma_i \leq u \|\mathcal{T}\|_{2,L^2_\rho}$. Consequently, for $\gammar$ bounded away from $1$ and $\epsilon \gtrsim u \sqrt{n} \|\mathcal{T}\|_{2,L^2_\rho}$, the subspace $\SPAN(\{v_i\}_{\sigma_i \leq (1/\gammar-1)\epsilon})$ is non-empty. In contrast, when $\Phi$ is an orthonormal basis, all singular values are equal to $1$ and the relaxation becomes negligible. This is consistent with the fact that, in an orthonormal basis, every direction is well resolved, essentially unaffected by rounding errors and practically undamped by regularization.

\subsection{Rounding errors arising in computation} \label{sec:computation}
In this section, we address how the expansion coefficients $c$ are computed such that they satisfy the discrete regularization requirement~\eqref{thm:discreteregulariedcond}, taking into account rounding errors arising during their computation. Since these results may be valuable outside of our specific setting in function approximation, we first consider an abstract discrete setting. That is, for any $A \in \mathbb{C}^{m \times n}$ and $b \in \mathbb{C}^m$, we look for ways to obtain coefficients c such that the numerical residual 
\[
    \|\fl(A) c - \fl(b)\|_2
\]  
is small. At the end of the section, we use these results to obtain an end-to-end error bound for function approximation. The proofs of this section can be found in Appendix~\ref{sec:app1}.

\subsubsection{Insufficiency of backward stability} \label{sec:backwardstability}
The coefficients minimizing the numerical residual equal
\begin{equation} \label{eq:bestnumapprox}
    c = \fl(A)^\dagger \fl(b).
\end{equation}
In practice, however, these can never be computed exactly: rounding errors arise not only in the data $\fl(A)$ and $\fl(b)$ but also during the execution of the numerical algorithm itself. This raises the question of what guarantees one can reasonably expect from the computed output.

In numerical analysis, it is standard to require that a numerical method is \textit{backward stable}. Specifically, \textit{a method for computing $y = g(x)$ is called backward stable if, for any $x$, it produces a computed $\widehat{y}$ with a small backward error, that is, $\widehat{y} = g(x + \Delta x)$ for some small $\Delta x$}~\cite[section 1.5]{highamAccuracyStabilityNumerical2002}. Here, the perturbation $\Delta x$ models not only the rounding of the input data---which is already taken into account in~\eqref{eq:bestnumapprox}---but also the cumulative effect of rounding errors incurred during the computation of $g(x)$. 

Consequently, the coefficients computed by a backward stable algorithm for the mapping
\begin{equation} \label{eq:coefforthproj}
   (A,b) \mapsto A^\dagger b \in \argmin_{c \in \mathbb{C}^n} \|Ac-b\|_2^2
\end{equation}
satisfy
\[
    \hat{c} = (A+\Delta A)^\dagger (b+\Delta b),
\]
where, in general, $A + \Delta A \neq \fl(A)$ and $b + \Delta b \neq \fl(b)$. Typically, $\|\Delta A\|_2 \lesssim u \|A\|_2$ and $\|\Delta b\|_2 \lesssim u \|b\|_2$, where the implied constants depend on the algorithm and typically on the problem size. We find that the output of a backward stable algorithm is guaranteed to have a small numerical residual only when $A$ is well-conditioned.

\begin{theorem} \label{thm:backwardstab1}
    Given $A\in\mathbb{C}^{m\times n}$ and $b \in \mathbb{C}^m$, let $\hat{c} \in \mathbb{C}^n$ satisfy
    \[
        \hat{c} = (A+\Delta A)^\dagger (b+\Delta b),
    \]
    for some $\|\Delta A\|_2 \leq \Calg u \|A\|_2$ and $\|\Delta b\|_2 \leq \Calg u \|b\|_2$ with $\Calg > 0$. Then,
    \[
        \|A\hat{c} - b\|_2 \leq \|Ac - b\|_2 + \Calg u \left( \|A \|_2 (\|\hat{c}\|_2 + \|c\|_2) + 2\|b\|_2 \right), \qquad \forall c \in \mathbb{C}^n.
    \]
    Furthermore, if $A$ satisfies $\kappa(A) > (\Calg u)^{-1}$, then 
    \[
        \|A\hat{c} - b\|_2 \leq \|AA^\dagger b - b\|_2 + \Calg u \left( \frac{(1+ \Calg u) \|A\|_2 }{\sigma_{\min}(A) - \Calg u \|A\|_2} + \kappa(A) + 2\right) \|b\|_2
    \]
    and
    \[
        \|\hat{c}\|_2 \leq \frac{(1+ \Calg u)\|b\|_2}{\sigma_{\min}(A) - \Calg u \|A\|_2},
    \]
    where $\sigma_{\min}$ denotes the smallest nonzero singular value.
\end{theorem}

When $A$ is well-conditioned, Thm.~\ref{thm:backwardstab1} provides strong guarantees. In particular, the computed coefficients $\hat{c}$ have a near-optimal residual and controlled norm. Combined with Prop.~\ref{prop:discrete-upper}, this guarantees a small numerical residual.

In contrast, no such guarantees are available when $A$ is numerically rank-deficient. Indeed, a backward stable algorithm applied to~\eqref{eq:coefforthproj} may produce coefficients of arbitrarily large norm. More precisely, there exist perturbations $\Delta A$ satisfying $\|\Delta A \|_2 \lesssim u \|A\|_2$ such that the smallest singular value of $A + \Delta A$ becomes arbitrarily small. In this situation, the pseudo-inverse $(A + \Delta A)^\dagger$ may have arbitrarily large norm and, consequently, the computed coefficients $\hat{c}$ can be unbounded.

\begin{remark}
    One might argue that this worst-case scenario is unlikely if the perturbations $\Delta A$, $\Delta b$ are quasi-random. In the spirit of Section~\ref{sec:lowerbound}, one could attempt to define a stochastic model to capture this behaviour. However, we proceed by showing that backward stability does suffice when combined with regularization.
\end{remark}

\subsubsection{Sufficiency of backward stability with regularization}

Once again, damping perturbed directions through regularization is a natural idea to avoid this blow-up of coefficients. Hence, we consider backward stable algorithms for mappings of the following kind:
\begin{equation} \label{eq:coeffreg}
    (A,b) \mapsto \alpha(A,b) \qquad \text{where } e_\epsilon(\alpha(A,b); A, b) \leq C \inf_{c \in \mathbb{C}^n} e_\epsilon(c; A, b)
\end{equation}
for some $\epsilon > 0$ and $C \geq 1$ with $e_\epsilon(c; A, b) = \|Ac - b\|_2 + \epsilon \|c\|_2$. 

\begin{theorem} \label{thm:backwardstab2}
    Given $A\in\mathbb{C}^{m\times n}$ and $b \in \mathbb{C}^m$, let $\hat{c} \in \mathbb{C}^n$ satisfy
    \begin{equation} \label{thm:backwardstab2:eq}
        e_\epsilon(\hat{c}; A+ \Delta A, b + \Delta b) \leq C \inf_{c \in \mathbb{C}^n} e_\epsilon(c; A+\Delta A, b+ \Delta b)
    \end{equation}
    for some $\|\Delta A\|_2 \leq \Calg u \|A\|_2$ and $\|\Delta b\|_2 \leq \Calg u \|b\|_2$ with $\Calg > 0$ and $C \geq 1$. Then, if $\epsilon  \geq \Calg u \|A\|_2$, it holds that
    \[
        e_\epsilon(\hat{c}; A, b) = \|A\hat{c} - b\|_2 + \epsilon \|\hat{c}\|_2 \leq 2C\|Ac - b\|_2 + 3C \epsilon \|c\|_2  + u\Calg(1+2C)\|b\|_2, \qquad \forall c \in \mathbb{C}^{n}.
    \]
\end{theorem}

Combining this result with Prop.~\ref{prop:discrete-upper}, we find that small numerical residuals are guaranteed when $\epsilon \geq u \|A\|_2 \max(\Calg, \sqrt{n})$. This reveals an important reason for regularization: it guarantees accuracy under backward stable computation, even in the presence of numerical rank-deficiency.

Regularizing the problem as in~\eqref{eq:coeffreg} is by no means new. In low rank matrix theory it is standard practice to penalize the $\ell^2$-norm of the solution vector $c$. This is commonly achieved either via truncated singular value decomposition (TSVD) or via Tikhonov regularization. Entirely analogously to Prop.~\ref{lm:regu}, one can show the following. For the TSVD, this result also follows from combining~\cite[Lemma 3.3]{coppe2020az} and~\cite[Theorem 3.8]{adcockFramesNumericalApproximation2020}.

\begin{proposition}
    The Tikhonov regularized solution
    \[
        \hat{c} = \argmin_{c\in\mathbb{C}^n} \|A c - f\|_2^2 + \epsilon^2 \|c\|_2^2
    \]
    satisfies~\eqref{eq:coeffreg} for $C = \sqrt{2}$. The truncated singular value decomposition (TSVD) solution 
    \[
        \hat{c} = A_\epsilon^\dagger f,
    \]
    where $A_\epsilon ^\dagger$ denote the Moore-Penrose inverse of $A$ after setting its singular values below a threshold $\epsilon$ to zero, satisfies~\eqref{eq:coeffreg} for $C = 2$.
\end{proposition}

\subsubsection{End-to-end error bound for function approximation}

We are now able to combine Thm.~\ref{thm:backwardstab2} for $A = \mathcal{M}\mathcal{T}$ and $b = \mathcal{M}f$ with Thm.~\ref{thm:discreteerrorbound}. This results in an error bound that takes into account rounding of the functions in $\Phi$ and $f$, rounding during the computation of the expansion coefficients and discretization errors.
\begin{corollary}
    Given $\mathcal{M} \from L^\infty(X) \to \mathbb{C}^m$, $\Phi \subset L^\infty(X)$ and $f \in L^\infty(X)$, let $A = \mathcal{M}\mathcal{T}$ and $b = \mathcal{M}f$. Furthermore, let
    \begin{itemize}
        \item $\hat{c}\in\mathbb{C}^n$ be computed by a backward stable algorithm for an appropriately regularized mapping, i.e.,~\eqref{thm:backwardstab2:eq} is satisfied with $\|\Delta A\|_2 \leq \Calg u \|A\|_2$ and $\|\Delta b\|_2 \leq \Calg u \|b\|_2$ for some $\Calg > 0$, $C \geq 1$ and $\epsilon \geq \max(u\sqrt{n} \|\mathcal{T}\|_{2,L^2_\rho}, u \Calg \|A\|_2)$;
        \item the discretization be sufficiently rich, i.e.,~\eqref{thm:norminequality} is satisfied for some $0 < \gammar \leq 1$ and the same $\epsilon$.
    \end{itemize}
    Then, 
    \begin{align} \label{thm:finalerrorbound2}
        \begin{split}
            \|\fl(\mathcal{T})\hat{c} - \fl(f)\|_{L^2_\rho} \leq \inf_{c \in \mathbb{C}^n} \left( E_\epsilon(c) + \frac{1+3C}{\gammar} e_\epsilon(c) \right) + u \Calg (1+2C) \|\mathcal{M}\|_{L^\infty,2} \|f\|_{L^\infty} + u\|f\|_{L^2_\rho}
        \end{split}
    \end{align}
    with $E_\epsilon$ and $e_\epsilon$ as defined in~\eqref{eq:contl2regobj} and~\eqref{eq:discl2regobj}, respectively.
\end{corollary}
\begin{proof}
    From the proof of Thm.~\ref{thm:discreteerrorbound}, we obtain for every $c \in \mathbb{C}^n$ that
    \[
        \|\fl(\mathcal{T})\hat{c} - \fl(f)\|_{L^2_\rho} \leq E_\epsilon(c) + \frac{1}{\gammar} \left( e_\epsilon(\hat{c}) + e_\epsilon(c) \right) + u\|f\|_{L^2_\rho}.
    \]
    Furthermore, Thm.~\ref{thm:backwardstab2} results in $e_\epsilon(\hat{c}) = e_\epsilon(\hat{c}; A, b) \leq 3C e_\epsilon(c) + u \Calg (1+2C) \|\mathcal{M}\|_{L^\infty,2} \|f\|_{L^\infty}$.
\end{proof}

\begin{remark}
    We find that the lower bound of the regularization parameter $\epsilon$ depends on $\Calg$, of which the behaviour is well-studied across different algorithms; see, for example, \cite{highamAccuracyStabilityNumerical2002}.
\end{remark}

\section{Numerical examples} \label{sec:numericalexamples}
\subsection{Illustration of the accuracy barrier} \label{sec:numexample1}
We reproduce~\cite[Example~3]{brubeckVandermondeArnoldi2021} and consider the $L^2$-approximation of
\begin{equation}\label{eq:numexampleapprox}
    f(x) = \frac{1}{10 - 9x} \approx \sum_{k=-n}^{n} c_k \exp(i k \pi x / 2) = \sum_{k=-n}^{n} c_k \phi_k,
\end{equation}
on $[-1,1]$ using a Fourier extension on $[-2,2]$. For background on Fourier extensions, see Section~\ref{sec:fourierextension}. Figure~\ref{fig:comparison_svds} shows that the basis $\{\phi_k\}_{k=-n}^n$ becomes numerically rank-deficient around $n=20$. 

Discretizing in $1000$ Chebyshev points $x_j \in [-1,1]$ yields the linear system
\[
    Ac = b, \qquad (A)_{j,k} = \phi_k(x_j), \quad (b)_j = f(x_j).
\]
Figure~\ref{fig:comparison} compares several approaches for computing the coefficients $c$ in MATLAB. Whenever regularization is employed, we take $\epsilon = 10^{-14}$. Details on the implementation can be found in~\cite{githubrepo}.

In exact arithmetic, one expects exponential convergence~\cite{huybrechsFourierExtensionNonperiodic2010}. However, all methods—except for one discussed below—exhibit slower convergence once the basis becomes numerically rank-deficient, regardless of how the coefficients are computed. This includes approximations computed in a numerically orthogonalized basis via a QR decomposition. These observations illustrate the accuracy barrier characterized in Corr.~\ref{corr:accuracy barrier}; see~\cite{adcockNumericalStabilityFourier2014} for a detailed discussion of the regularized convergence behaviour.

Among the tested methods, MATLAB’s backslash operator performs particularly poorly. As documented, when $A$ is numerically rank-deficient, $A \backslash b$ does not necessarily return a small-norm solution. Consequently, large coefficient norms may arise, leading to poor numerical residuals. The built-in function \texttt{lsqminnorm} provides a robust alternative.

The only method that achieves exponential convergence is the Vandermonde with Arnoldi (VwA) algorithm~\cite{brubeckVandermondeArnoldi2021,zhu2025convergence}. This does not contradict Corr.~\ref{corr:accuracy barrier}, since the VwA algorithm does not compute approximations in the numerical span of the Fourier extension functions. Instead, it constructs an alternative basis $\{q_k\}_{k=0}^n$\footnote{For real valued functions $f$, it suffices to consider non-negative modes, since the coefficients satisfy $c_{-k} = \overline{c_k}$; see~\cite[Example~3]{brubeckVandermondeArnoldi2021}.} via an iterative Arnoldi-type procedure:
\[
    q_0(x) = 1, \qquad q_k(x) = \text{orth}\bigl( \exp(i \pi x / 2)\, q_{k-1}(x) \bigr),
\]
where orth corresponds to discrete orthogonalization with respect to the previous basis functions $q_0, \dots, q_{k-1}$. Hence, it uses the additional knowledge that multiplying an exponential function by $\exp(i\pi x /2)$ increases its frequency, instead of evaluating $\exp(i k \pi x /2)$ directly. This procedure is a discrete analogue of the Stieltjes process for generating orthogonal polynomials~\cite{gautschiOrthogonalPolynomialsComputation2004}. 

Empirically, we find that the VwA basis does strongly represent all directions needed for exponential convergence. A detailed analysis of why this is the case lies beyond the scope of this paper; see~\cite{stylionopoulosArnoldiGramSchmidtProcess2010,stylianopoulos2013strong}. Notably, this also means that the sample points should satisfy the full discretization condition~\eqref{thm:analytical:norminequality} for VwA, while the relaxed discretization condition~\eqref{thm:norminequality} suffices for regularized approximation. Finally, we note that the VwA algorithm is limited to Vandermonde-type bases (e.g., monomials and exponentials).

\begin{figure}
    \centering 
    \includegraphics*[width=\linewidth]{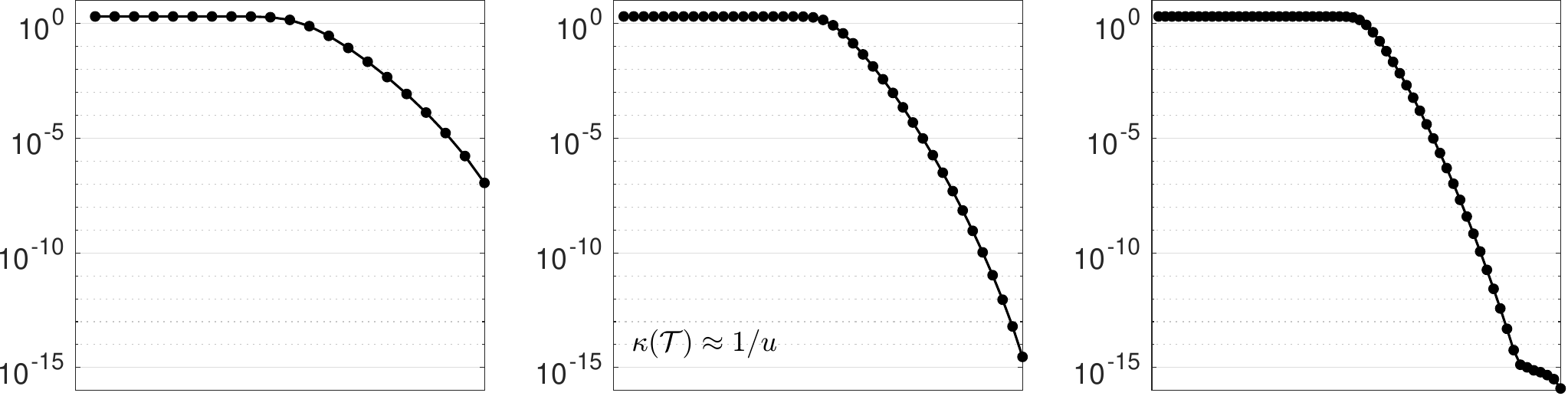}
    \caption{Singular value profile of the synthesis operator associated with $\{\exp(i k \pi x /2)\}_{k=-n}^n$ for $n = 10$ (left), $n = 20$ (middle), and $n = 30$ (right), computed in double precision using the singular value decomposition of a quasimatrix in Chebfun~\cite{driscollChebfunGuide2014}. In exact arithmetic, the singular values decrease exponentially toward zero as $n$ goes to infinity~\cite{adcockNumericalStabilityFourier2014}.}
    \label{fig:comparison_svds}
\end{figure}

\begin{figure}
    \centering 
    \includegraphics*[height=7.5cm]{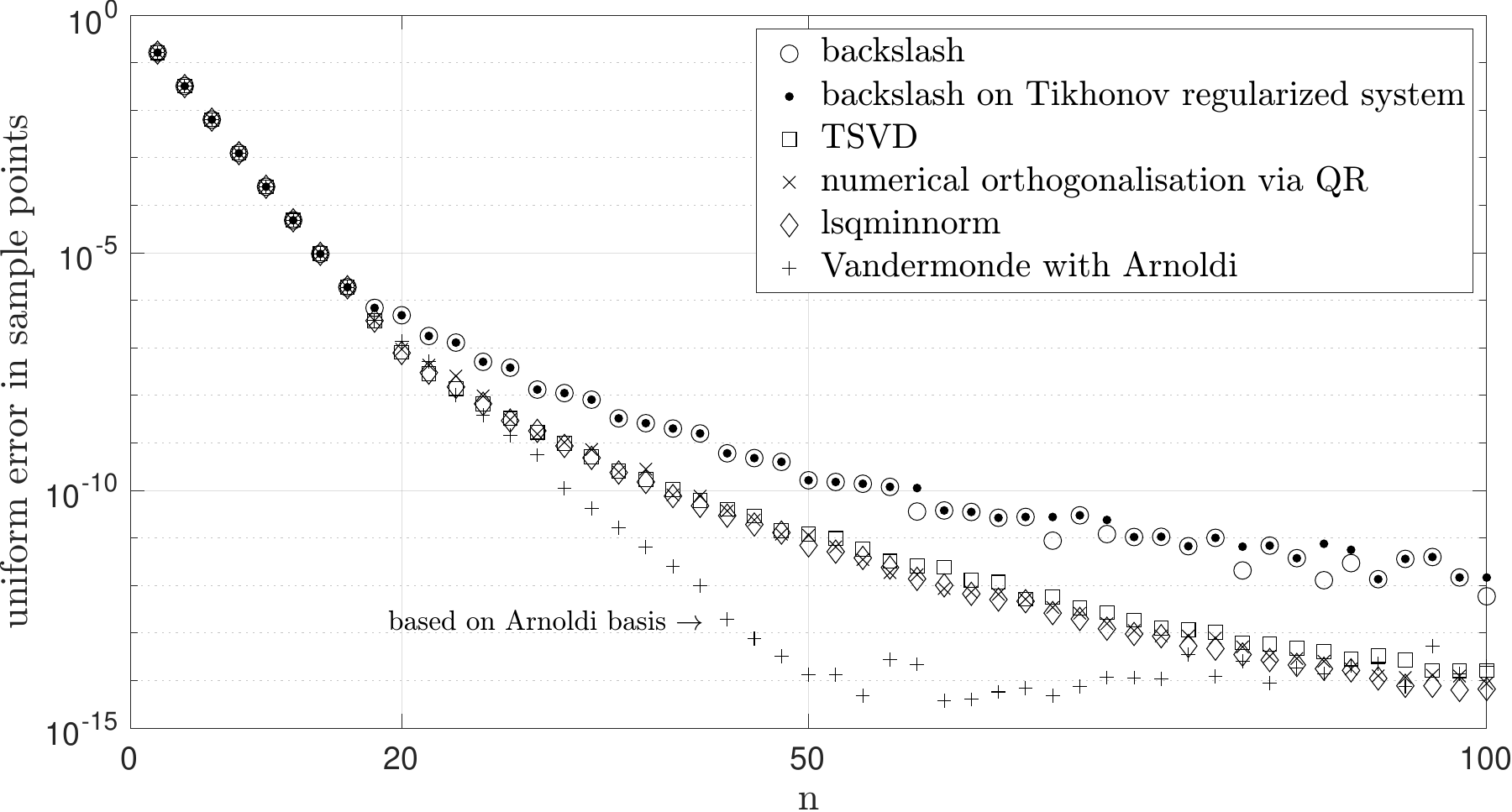}
    \caption{A comparison of different methods to compute expansion coefficients for the approximation of $f(x) = 1/(10-9x)$ in a Fourier extension frame on $[-2,2]$. All methods except the Vandermonde with Arnoldi algorithm compute approximations within the numerical span of the Fourier extension frame and are, therefore, subject to the accuracy barrier as described in Corr.~\ref{corr:accuracy barrier}.}
    \label{fig:comparison}
\end{figure}

\subsection{Illustration of the relaxed discretization condition} \label{sec:numexample2}
To illustrate the relaxed discretization condition~\eqref{thm:norminequality}, we consider an enriched basis. In many science and engineering problems, expert knowledge about the structure of the target function is known—often through asymptotic analysis—and can be incorporated in the approximation method by augmenting the approximation space with additional basis functions. For a sufficiently large number of enrichments, this leads to numerical rank-deficiency.

We again consider a simple model problem, namely the $L^2$-approximation of
\begin{equation} \label{eq:f}
    f(x) = J_{1/2}(x + 1) + \frac{1}{x^2 + 1}, \qquad x \in [-1,1]
\end{equation}
where $J_{1/2}$ denotes the Bessel function of the first kind of order $1/2$. It is known that $J_{1/2}(x + 1) \sim \sqrt{x + 1}$ as $x \to -1$, which can be taken into account by approximating in
\begin{equation} \label{eq:sumframe}
    \Phi = \{\varphi_{i}\}_{i=1}^{n/2} \cup \{\psi_{i}\}_{i=1}^{n/2}, \qquad \text{ where } \psi_i = w \varphi_i,
\end{equation}
assuming $n$ is even. The functions $\varphi_i$ are standard (smooth) basis functions, while the singular behaviour is represented by the weight function $w$. For $w \in L^\infty(-1,1)$, this spanning set is a subsequence of an overcomplete frame and, hence, $\Phi$ is numerically redundant for sufficiently large $n$; see \S7.1. For the approximation of $f$~\eqref{eq:f}, we choose $\varphi_i$ equal to the Legendre polynomial of degree $i-1$ and $w(x) = \sqrt{(x+1)/2}$.

We can evaluate the quality of a specific choice of sample set for this problem by explicitly computing the constants $\gammar$ and $\gammae$ involved in~\eqref{thm:norminequality} and~\eqref{thm:analytical:norminequality}, respectively. If $\gamma$ is close to $1$, the approximation is near-optimal, whereas if $\gamma$ is close to zero the approximation is suboptimal. Numerically computing the constants $\gamma$ can be done by solving a generalized eigenvalue problem, possibly in high precision. For more details, we refer to~\S\ref{sec:christoffelfuns} and our implementation~\cite{githubrepo}.

We use sample points related to the smooth behaviour of $\varphi_i$ and sample points related to the singular behaviour of $w$. For the first we use Legendre points, while for the latter we use sample points that are exponentially clustered towards $x = -1$. These have been found effective for least squares approximation of functions with branch point singularities in the specific case of rational approximation~\cite{herremansResolutionSingularitiesRational2023}. 

The two-parameter plots on Figure~\ref{fig:A_weightedlegendre} show $1/\gammar$ and $1/\gammae$, which appear in the error bounds for the regularized (Thm.~\ref{thm:discreteerrorbound}) and the exact case (Prop.~\ref{prop:discreteerror}), respectively, using a varying number of Legendre points and exponentially clustered sample points. In the regularized case, we plot $\gammar \approx \sqrt{\lambda_{\min}^r}$, which is accurate up to a factor $\sqrt{2}$, as described in Prop.~\ref{prop:eigenvalue}. We compare this to the accuracy of the TSVD approximation ($\epsilon = 10^{-14}$) of $f$. The error is evaluated on an independent, dense grid. We can conclude that $\gammar$ accurately predicts the behaviour of the numerical error, while $\gammae$ overestimates the required number of sample points.

\begin{figure}
    \centering
    \includegraphics[width=.495\linewidth]{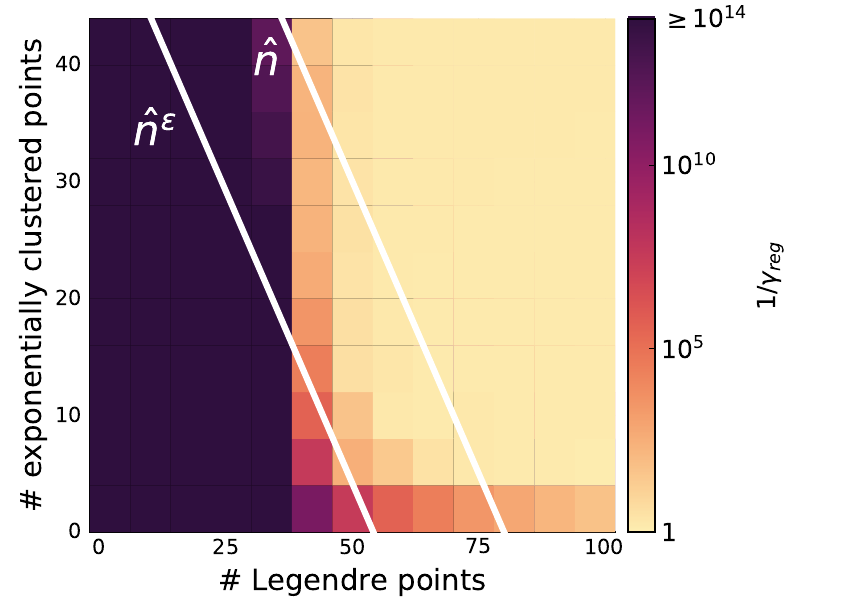}\hfill
    \includegraphics[width=.495\linewidth]{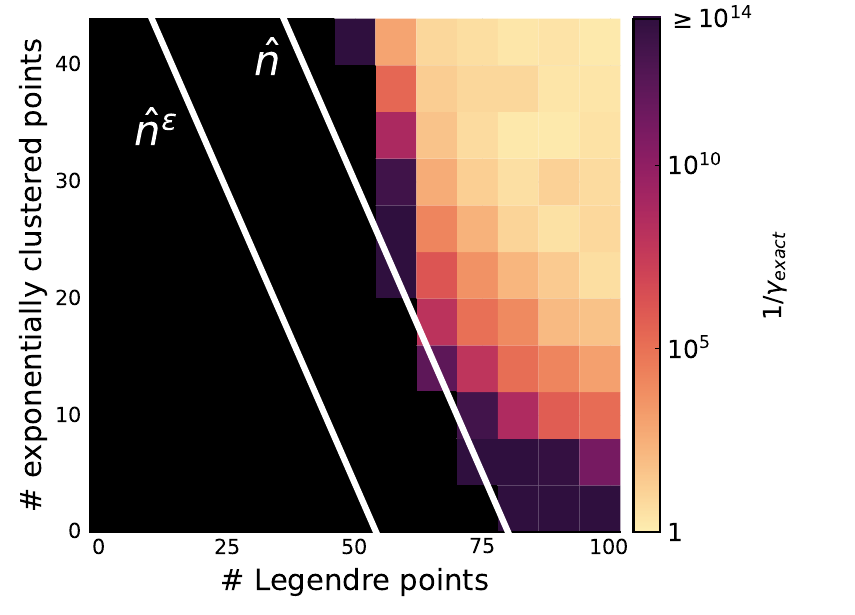} 
    \includegraphics[width=.495\linewidth]{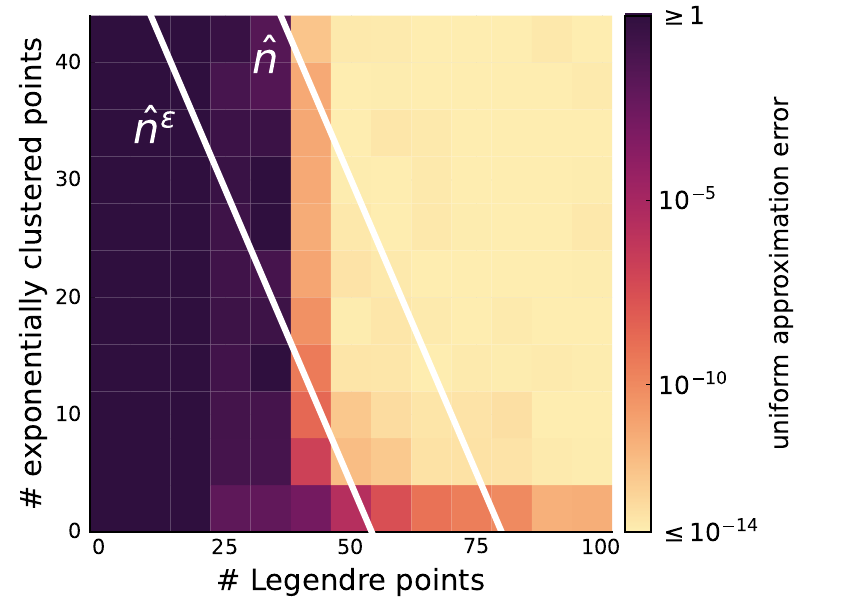}
    \caption{Top: comparison between $1/\gammar$ for $\epsilon = 10^{-14}$ (left) and $1/\gammae$ (right) for the Legendre + weighted Legendre basis~\eqref{eq:sumframe} with $n = 80$, for a varying number of Legendre points and exponentially clustered sample points. Bottom: the uniform approximation error—evaluated on an independent, dense grid—of the TSVD approximation of $f$~\eqref{eq:f} with threshold $\epsilon = 10^{-14}$. The black squares correspond to $\gammae = 0$. The white lines mark where the total number of sample points equals $\hat{n} = 80$ or $\hat{n}^\epsilon \approx 54.8$, i.e., the effective dimension introduced in the next section. We find that the top left figure is a good model for the actual error.}
    \label{fig:A_weightedlegendre}
\end{figure}

\section*{Part II: Random sampling in finite precision}

\section{Christoffel sampling with numerical rank-deficiency} \label{sec:randomizedsampling}
In this section, we analyze random sampling for least squares approximation using the Christoffel function. While this is well-studied in exact arithmetic, we focus on the effects of finite-precision arithmetic in the presence of numerical rank-deficiency.

\subsection{Regularized Christoffel sampling} \label{sec:christoffelfuns}
The analysis of random sampling becomes tractable after observing that the norm inequality~\eqref{thm:analytical:norminequality} is equivalent to a spectral inequality, that is,
\begin{align} \label{eq:spectralbound} 
    \begin{split} 
        \gammae \|v\|_{L^2_\rho} \leq \|v\|_{\mathcal{M}}, \quad \forall v \in \SPAN(\Phi) \qquad &\Leftrightarrow \qquad \gammae \|\mathcal{T}c\|_{L^2_\rho} \leq \|\mathcal{T}c\|_{\mathcal{M}}, \quad \forall c \in \mathbb{C}^n \\ &\Leftrightarrow \qquad \gammaesquared G \preceq G_d
    \end{split}
\end{align}
where $G_d, G \in \mathbb{C}^{n \times n}$ represent the operators $(\mathcal{M}\mathcal{T})^*\mathcal{M}\mathcal{T}$ and $\mathcal{T}^*\mathcal{T}$, respectively~\cite{cohen2013stability,cohen2017}. The same equivalence holds for any set spanning the space $\SPAN(\Phi)$. In particular, consider an $L^2_\rho(X)$-orthonormal basis $\{u_i\}_{i=1}^{\hat{n}}$ with synthesis operator $\mathcal{U}$, then $\mathcal{U}^*\mathcal{U} = I$ and
\[
    \gammae \|v\|_{L^2_\rho} \leq \|v\|_{\mathcal{M}}, \quad \forall v \in \SPAN(\Phi) \qquad \Leftrightarrow \qquad \gammaesquared I \preceq G_{d,\mathcal{U}}
\]
where $G_{d,\mathcal{U}}$ represents $(\mathcal{M}\mathcal{U})^*\mathcal{M}\mathcal{U}$. Hence, good sampling operators control the smallest eigenvalue of the discrete Gram matrix $G_{d,\mathcal{U}}$.

Consider now random pointwise sampling:
\begin{equation} \label{eq:pointwisesamplingop}
    \mathcal{M} \, :\, f \mapsto \{\sqrt{w_j} f(x_j)\}_{j=1}^m \qquad \text{where}\qquad  x_j \overset{\mathrm{iid}}{\sim} \mu, \; w_j = w(x_j)/m \qquad \text{with} \qquad w d\mu = d\rho,
\end{equation}
where $\|w^{-1}\|_{L^1_\rho} = 1$. Then, the smallest eigenvalue of $G_{d,\mathcal{U}}$ can be characterized via tools from random matrix theory. An important quantity turns out to be the the \textit{inverse Christoffel function}\footnote{The inverse of $k$ has been studied in the context of orthogonal polynomials long before its use in discrete least squares problems. In classical analysis, $k^{-1}$ is known as the Christoffel function~\cite{nevai1986geza}.}
\begin{equation} \label{eq:christoffelfunction}
    k(x) = \sum_{i=1}^{\hat{n}} \lvert u_i(x) \rvert^2,
\end{equation}
of which the definition does not depend on the choice of orthonormal basis. An important property is
\[
    \int_X k \, d\rho = \int_X \sum_{i=1}^{\hat{n}} \lvert u_i \rvert^2 \, d\rho = \sum_{i=1}^{\hat{n}} \int_X \lvert u_i \rvert^2 \, d\rho = \hat{n}.
\]
The following sampling result is well-known; see e.g.~\cite{cohen2013stability,hampton2015coherence,cohen2017,dolbeault2022optimal,adcockOptimalSamplingLeastsquares2024}. To accommodate $w = \hat{n}/k$, we assume that $k > 0$ almost everywhere. This is a rather mild and standard assumption.

\begin{proposition} \label{prop:christoffel}
    Let $0 < \delta < 1$ and let the sampling operator be defined by~\eqref{eq:pointwisesamplingop}. If
    \begin{equation} \label{eq:samplinglowerbound}
        m \geq 2.48 \|wk\|_{L^\infty} \log(\hat{n}/\delta),
    \end{equation}
    then~\eqref{thm:analytical:norminequality} is satisfied for $\gammae = 1/2$ with probability at least $1-\delta$. In particular, for $w = \hat{n}/k$, the condition becomes 
    \begin{equation} \label{eq:samplinglowerboundchrist}
        m \geq 2.48 \hat{n} \log(\hat{n}/\delta).
    \end{equation}
\end{proposition}
\begin{proof}
    Similar to the proof of~\cite[Theorem 2.1]{cohen2017}, we invoke the matrix Chernoff bound~\cite[Theorem 1.1]{tropp2012user} to bound the spectrum of $G_{d,\mathcal{U}}$. We require only that its smallest eigenvalue is larger than $1/4$ with probability at least $1-\delta$ to ensure that~\eqref{thm:analytical:norminequality} holds for $\gammae = 1/2$. This leads to
    \[
        m \geq \|w k\|_{L^\infty} \left(3/4 + \left( 1-3/4\right) \log \left( 1-3/4\right)\right)^{-1} \log(\hat{n}/\delta).\qedhere
    \]
\end{proof}

The sampling distribution associated with $w = \hat{n}/k$ requires only $\mathcal{O}(\hat{n} \log(\hat{n}))$ samples for accurate least squares fitting with high probability. This is close to interpolation, corresponding to $m = \hat{n}$. For a summary of optimality benchmarks and probabilistic error analyses, we refer to~\cite[\S1 and 2]{dolbeault2022optimal}. Also, we note that considerable efforts have been made to obtain $\mathcal{O}(\hat{n})$ sampling strategies instead of $\mathcal{O}(\hat{n} \log(\hat{n}))$; see \cite[\S8]{adcockOptimalSamplingLeastsquares2024} and references therein.

We now turn to random sampling for least squares approximation in the presence of numerical rank-deficiency. In the previous sections, we have found that $\ell^2$-regularized approximation—with a regularization parameter $\epsilon$ on the order of the unit round-off $u$—yields near-best numerical errors. Furthermore, we showed that regularization relaxes the discretization condition for approximation based on sampled data. Observe now that the relaxed discretization condition
\begin{equation} \label{eq:norminequalitylater}
    \gammar \left( \|\mathcal{T}c\|_{L^2_\rho} + \epsilon \|c\|_2 \right) \leq \|\mathcal{T}c\|_{\mathcal{M}} + \epsilon \|c\|_2, \qquad \forall c \in \mathbb{C}^n
\end{equation}
can also be linked to a spectral inequality, as described in the following proposition. This spectral inequality does depend on the choice of the representation $\Phi$.

\begin{proposition} \label{prop:eigenvalue}
    Consider the largest $\alpha$ so that $\alpha (G + \epsilon^2 I) \preceq G_d + \epsilon^2 I$ and the largest $\gammar$ satisfying~\eqref{eq:norminequalitylater}, then
    \[
        \sqrt{\alpha / 2} \leq \gammar \leq \sqrt{2 \alpha}.
    \]
\end{proposition}
\begin{proof}
    We have
    \[
        \alpha = \min_{c \in \mathbb{C}^n} \frac{c^* G_d c + \epsilon^2 c^* c}{c^* G c + \epsilon^2 c^* c} = \min_{c \in \mathbb{C}^n} \frac{\|\mathcal{T}c\|_{\mathcal{M}}^2 + \epsilon^2 \|c\|_2^2}{\|\mathcal{T}c\|_{L^2_\rho}^2 + \epsilon^2 \|c\|_2^2} \qquad \text{and} \qquad \gammar = \min_{c \in \mathbb{C}^n} \frac{\|\mathcal{T}c\|_{\mathcal{M}} + \epsilon \|c\|_2}{\|\mathcal{T}c\|_{L^2_\rho} + \epsilon \|c\|_2}.
    \]
    The final result now follows from $(x+y)/\sqrt{2}\leq \sqrt{x^2 + y^2} \leq x+y$, for all $x,y > 0$. 
\end{proof}

Once again, if we consider a random pointwise sampling operator as in~\eqref{eq:pointwisesamplingop}, we can characterize the associated smallest generalized eigenvalue using random matrix theory. In this case, the important quantity turns out to be the \textit{inverse regularized Christoffel function}
\begin{equation} \label{eq:christoffelfunctionreg}
    k^\epsilon(x) = \sum_{i=1}^{\hat{n}} \frac{\sigma_i^2}{\sigma_i^2 + \epsilon^2} \lvert u_i(x) \rvert^2 \qquad \text{where} \qquad \sum_{i=1}^{\hat{n}} \sigma_i u_i v_i^* \text{ is the SVD of $\mathcal{T}$}.
\end{equation}
Comparing this function to the inverse Christoffel function~\eqref{eq:christoffelfunction} shows that directions $u_i$ associated with small singular values $\sigma_i$ (relative to the regularization parameter $\epsilon$) are damped.

Associated with the function $k^\epsilon$ is an \textit{effective dimension} defined by 
\begin{equation} \label{eq:effectivedof}
    \hat{n}^\epsilon = \int_X k^\epsilon \, d\rho = \int_X \sum_{i=1}^{\hat{n}} \frac{\sigma_i^2}{\sigma_i^2 + \epsilon^2} \lvert u_i \rvert^2 \, d\rho = \sum_{i=1}^{\hat{n}} \frac{\sigma_i^2}{\sigma_i^2 + \epsilon^2} \int_X \lvert u_i \rvert^2 \, d\rho  = \sum_{i=1}^{\hat{n}} \frac{\sigma_i^2}{\sigma_i^2 + \epsilon^2}.
\end{equation}
Observe that $\hat{n}^\epsilon \leq \hat{n}$. The effective dimension is a measure for the number of well-represented directions in $\Phi$, which remain useful after finite-precision rounding. The following theorem shows that the effective dimension $\hat{n}^\epsilon$ serves a similar role as the dimension $\hat{n}$ in the presence of numerical rank-deficiency. We again assume that $k^\epsilon > 0$ almost everywhere.

\begin{theorem} \label{thm:christoffel2}
    Let $0 < \delta < 1$, let the sampling operator be defined by~\eqref{eq:pointwisesamplingop}, and let $\|\mathcal{T}\|_{L^2_\rho} \geq \epsilon > 0$. If 
    \begin{equation} \label{eq:samplinglowerbound2}
        m \geq (9.34 \|wk^\epsilon\|_{L^\infty} + 1.34) \log(8 \hat{n}^\epsilon/\delta),
    \end{equation}
    then~\eqref{thm:norminequality} is satisfied for $\gammar = 1/2$ with probability at least $1 - \delta$. In particular, for $w = \hat{n}^\epsilon/k^\epsilon$, the condition becomes 
    \begin{equation} \label{eq:samplinglowerboundchrist2}
        m \geq (9.34 \hat{n}^\epsilon + 1.34) \log(8 \hat{n}^\epsilon/\delta).
    \end{equation}
\end{theorem}
\begin{proof}
    From Prop.~\ref{prop:eigenvalue}, it follows that~\eqref{eq:norminequalitylater} holds for $\gammar = 1/2$ if $\alpha \geq 1/2$. Hence, we require that
    \[
        \frac{1}{2}(G + \epsilon^2 I) \preceq G_d + \epsilon^2 I
    \]
    holds with probability at least $1-\delta$. This expression is equivalent to 
    \[
        G - G_d \preceq \frac{1}{2}(G + \epsilon^2 I) \quad \Leftrightarrow \quad Y \preceq \frac{1}{2}I, \quad \text{where} \quad Y = S^{-1} Q^* \left( G - G_d \right) Q S^{-1},
    \]
    using the eigenvalue decomposition of $G + \epsilon ^2 I = Q S^2 Q^*$. Define the random Hermitian matrices
    \[
        X_j = \frac{1}{m} S^{-1} Q^* \left( G - w(x_j) \phi(x_j)^* \phi(x_j) \right) Q S^{-1}, \qquad 1 \leq j \leq m,
    \]
    where $\phi(x) = \begin{bmatrix} \phi_1(x) & \dots & \phi_n(x) \end{bmatrix}$ and $x_j \overset{\mathrm{iid}}{\sim} \mu$ with $w d\mu = d\rho$. Then,
    \[
        \sum_{j=1}^m X_j = Y \qquad \text{and} \qquad \EXP\left[X_j\right] = 0.
    \]
    For simplicity of notation, we introduce $\psi(x) = \phi(x) Q S^{-1}$ and 
    \[ 
        D = S^{-1} Q^* G Q S^{-1} = I - \epsilon^2 S^{-2} = \text{diag}(\sigma_1^2/(\sigma_1^2+\epsilon^2), \dots, \sigma_n^2 / (\sigma_n^2 + \epsilon^2)),
    \]
    where $\sigma_i$ denotes the $i$-th singular value of the synthesis operator $\mathcal{T}$ (if applicable, including those equal to zero, i.e., $\sigma_i$ for $\hat{n} < i \leq n$). For every $1 \leq j \leq m$, we bound
    \begin{align*}
        \lambda_{\max}(X_j) = \lambda_{\max}\left( \frac{1}{m}(D-w(x_j) \psi(x_j)^*\psi(x_j) )\right) &\leq \frac{1}{m} \left( \|D\|_2 + \|w(x_j) \psi(x_j)^* \psi(x_j) \|_2 \right) \\
        &= \frac{1}{m} \left(\frac{\sigma_1^2}{\sigma_1^2 + \epsilon^2} + w(x_j) \psi(x_j) \psi(x_j)^* \right) \\
        &\leq \frac{1 + \|wk^\epsilon\|_{L^{\infty}}}{m} \eqcolon L
    \end{align*}
    almost surely, using $\psi(x)\psi(x)^* = k^\epsilon(x)$ due to Prop.~\ref{prop:grammatrixform} below. Moreover, we have
    \begin{align*}
        \EXP\left[ X_j^2 \right] &= \EXP\left[ (\EXP\left[w(x_j)\psi(x_j)^*\psi(x_j)\right] - w(x_j)\psi(x_j)^*\psi(x_j))^2/m^2 \right] \\ &\preceq \EXP\left[ (w(x_j)\psi(x_j)^*\psi(x_j))^2/m^2 \right] \\
        &= \EXP\left[ w(x_j)^2\psi(x_j)^*\psi(x_j)\psi(x_j)^*\psi(x_j)/m^2 \right] \\
        &= \EXP\left[ w(x_j)^2 k^\epsilon(x_j) \psi(x_j)^*\psi(x_j) / m^2 \right] \\
        &\preceq D \|wk^\epsilon\|_{L^\infty} / m^2,
    \end{align*}
    almost surely, using $D = \EXP[w(x_j)\psi(x_j)^*\psi(x_j)]$. This leads to
    \[
        \EXP\left[ Y^2 \right] = \sum_{j=1}^m \EXP\left[X_j^2\right] \preceq D \frac{\|wk^\epsilon\|_{L^\infty}}{m} \eqcolon V.
    \]
    Using 
    \[
        v = \|V\|_2 = \frac{\|wk^\epsilon\|_{L^\infty}}{m} \sigma_1^2/(\sigma_1^2 + \epsilon^2) \quad \text{and} \quad d = \text{intdim}(V) = \frac{\text{tr}(V)}{\|V\|_2} = \frac{\hat{n}^\epsilon}{\sigma_1^2/(\sigma_1^2 + \epsilon^2)},
    \]
    it follows from~\cite[Thm.\ 7.7.1]{tropp2015introduction} and $1/2 \leq \sigma_1^2/(\sigma_1^2 + \epsilon^2) \leq 1$ (due to $\|\mathcal{T}\|_{L^2_\rho} \geq \epsilon$) that
    \begin{equation} \label{eq:probinproof}
        \Pr \left( \lambda_{\max}(Y) \geq 1/2 \right) \leq 4 d \exp\left( \frac{-1/8}{v + L/6}\right)
        \leq 8 \hat{n}^\epsilon \exp\left( \frac{-m/8}{(7\|wk^\epsilon\|_{L^\infty} + 1)/6} \right).
    \end{equation}
    In order for this probability to be smaller than $\delta$, we require that
    \begin{equation} \label{eq:probinproof2}
        m \geq ((28/3)\|wk^\epsilon\|_{L^\infty} + (4/3))\log(8\hat{n}^\epsilon / \delta).
    \end{equation}
    Following~\cite[Thm.\ 7.7.1]{tropp2015introduction},~\eqref{eq:probinproof} holds whenever $1/2 \geq \sqrt{v} + L/3$, which is satisfied when $m$ satisfies~\eqref{eq:probinproof2}, using $\hat{n}^\epsilon \geq 1/2$ (due to $\|\mathcal{T}\|_{L^2_\rho} \geq \epsilon$).
\end{proof}

\begin{remark}
    The difference in constants between Prop.~\ref{prop:christoffel} and Thm.~\ref{thm:christoffel2} arises for two main reasons. First, obtaining a bound that depends on the effective dimension requires the use of a matrix concentration inequality that depends on the intrinsic\footnote{The intrinsic dimension of a matrix $M$ equals $\text{tr}(M)/\|M\|_2$.} rather than the ambient dimension. This necessitates the use of a matrix Bernstein inequality, since the corresponding matrix Chernoff bound for the smallest eigenvalue is not currently available; see~\cite[Ch.7]{tropp2015introduction}. A second factor is the gap between $\gammar$ and the associated spectral inequality, as described in Prop.~\eqref{prop:eigenvalue}.
\end{remark}

In a fully discrete setting, sampling with a density proportional to the inverse Christoffel function is equivalent to sampling according to the \textit{(statistical) leverage scores} arising in least squares problems. Indeed, for row subsampling of a tall-and-skinny matrix, leverage score sampling yields a subspace embedding with high probability once $m \gtrsim \hat{n} \log(\hat{n}/\delta)$~\cite{drineas2012fast}. The regularized function $k^\epsilon$ plays the same role for $\ell^2$-regularized least squares and corresponds to the notion of \textit{ridge leverage scores}~\cite{alaoui2015fast}. The associated trace quantity $\hat{n}^\epsilon$ is also commonly referred to as the \textit{effective degrees of freedom} or the \textit{statistical dimension}. Thm.~\ref{thm:christoffel2} can be viewed as a continuous counterpart of ridge leverage score sampling results. Similar continuous counterparts have appeared, for example, in the setting of kernel ridge regression~\cite{avron2017random}. 

\subsection{Understanding the influence of regularization}
To illustrate the influence of regularization, Figure~\ref{fig:christoffel} compares $k$ and $k^\epsilon$ for the examples of Section~\ref{sec:numericalexamples}. Together with Prop.~\ref{prop:christoffel} and Thm.~\ref{thm:christoffel2}, Figure~\ref{fig:christoffel} can be used to assess the sampling requirements for regularized and unregularized approximation. Notably, the discretization condition associated with regularized approximation is automatically satisfied whenever the corresponding condition for unregularized approximation holds; see, e.g., Section~\ref{sec:discretizationcondition}. Consequently, Prop.~\ref{prop:christoffel} also applies to the regularized setting.

Consider regularized approximation with the specific choices $w = \hat{n}/k$ and $w = \hat{n}^\epsilon/k^\epsilon$. The required number of samples then scales with $\hat{n}$ and $\hat{n}^\epsilon$, respectively (ignoring logarithmic factors). In our examples, accounting for regularization—i.e., choosing the latter weight—yields only a constant-factor improvement, since the effective dimension $\hat{n}^\epsilon$ typically scales proportionally with $\hat{n}$ as $n$ varies. Moreover, the leading constant in Thm.~\ref{thm:christoffel2} is larger than that in Prop.~\ref{prop:christoffel}. 

Nevertheless, incorporating regularization into the analysis remains important for two main reasons. First, even if $\hat{n}^\epsilon=\|k^\epsilon\|_{L^1_\rho}$ is close to $\hat{n} = \|k\|_{L^1_\rho}$, the pointwise behavior of $k^\epsilon$ and $k$ may differ substantially. This can strongly affect the number of samples required for a prescribed sampling distribution. An example is provided by the Fourier extension frame considered in Figure~\ref{fig:christoffel}(a) and Section~\ref{sec:fourierextension}. For uniformly random samples (constant $w$), the two quantities
\[
    \|k\|_{L^\infty} = \mathcal{O}(n^2), \qquad \|k^\epsilon\|_{L^\infty} = \mathcal{O}(n \log(n))
\]
are substantially different. We find that the required number of samples for accurate numerical approximation is $\mathcal{O}(n\log(n)^2)$, whereas an analysis ignoring regularization would predict $\mathcal{O}(n^2\log(n))$.

Second, the sampling distribution associated with the Christoffel function is often computed numerically. In Section~\ref{sec:computabilitychrist}, we show that $k^\epsilon$ can be evaluated accurately using a backward stable algorithm, whereas no such guarantees can be made for $k$ in the presence of numerical rank-deficiency. Consequently, the regularized Christoffel function is highly relevant for the practical computation of sampling densities.

Finally, the influence of regularization can be further understood through an extremal characterization of the Christoffel function. It is well known~\cite{nevai1986geza,dolbeault2022optimal} that
\[
    k(x) = \max_{\substack{v \in \SPAN(\Phi) \\ v \neq 0}} \frac{\vert v(x)\vert^2}{\|v\|_{L^2_\rho}^2}.
\]
This characterization shows that a point $x\in X$ becomes important for sampling when there exists a function in the approximation space that is large at $x$ while remaining small elsewhere. Intuitively, this ensures that every function in the approximation space is visible on the sampling grid.

A similar property holds for the regularized Christoffel function, with the difference that functions with large expansion coefficients are penalized. In this sense, weakly represented directions may be less visible on the sampling grid than well-represented directions. The proof of the following proposition is given in Appendix~\ref{sec:app2}.

\begin{proposition} \label{prop:extremalproperty}
    Let $\Phi \subset L^2(X,\rho)$ and $\epsilon > 0$. It holds that
    \begin{equation} \label{eq:extremalprop}
        k^\epsilon(x) = \max_{\substack{c \in \mathbb{C}^n \\ c \neq 0}} \frac{\lvert \phi(x) c \rvert^2}{\|\mathcal{T}c \|_{L^2_\rho}^2 + \epsilon^2 \|c\|_2^2} \, ,
    \end{equation}
    where $\phi(x) = \begin{bmatrix} \phi_1(x) & \dots & \phi_n(x) \end{bmatrix} \in \mathbb{C}^{1 \times n}$.
\end{proposition}

\begin{figure}
    \begin{subfigure}{\linewidth}
        \includegraphics[width=.49\linewidth]{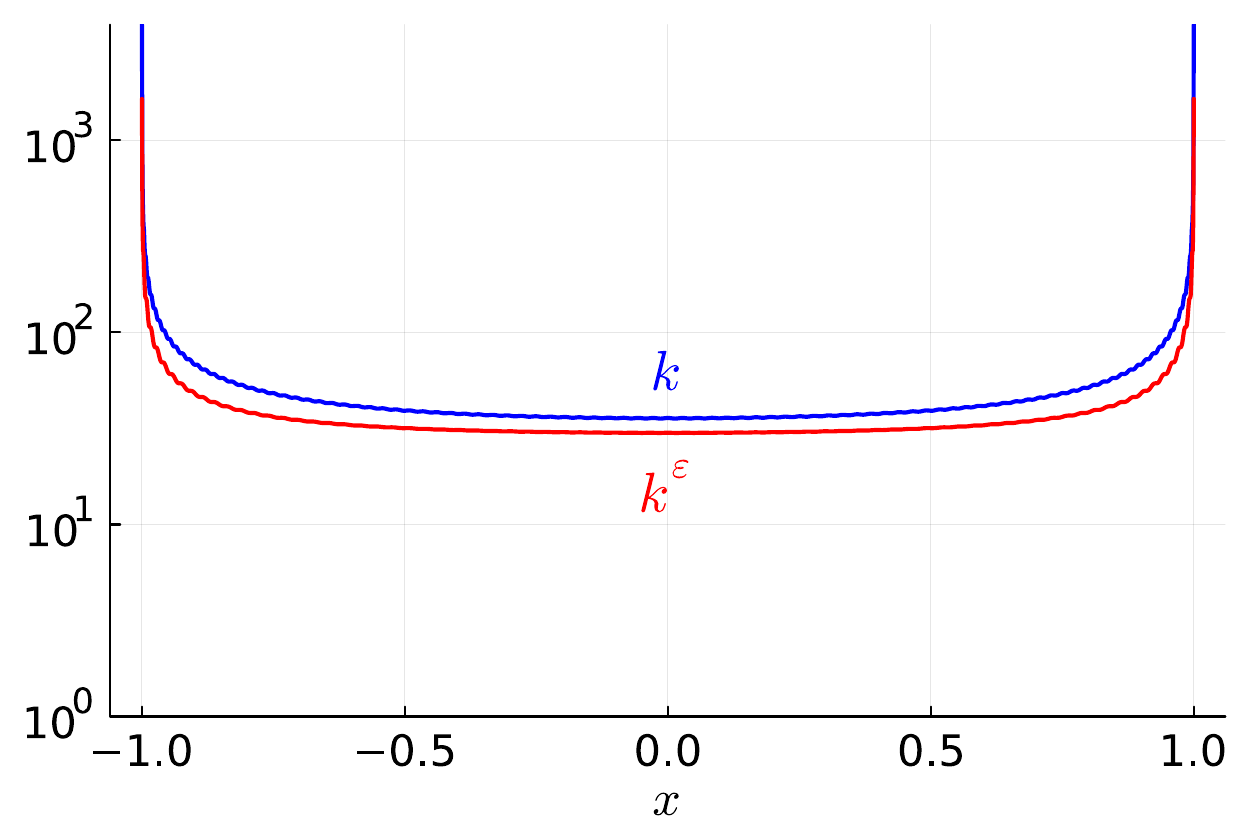}\hfill
        \includegraphics[width=.49\linewidth]{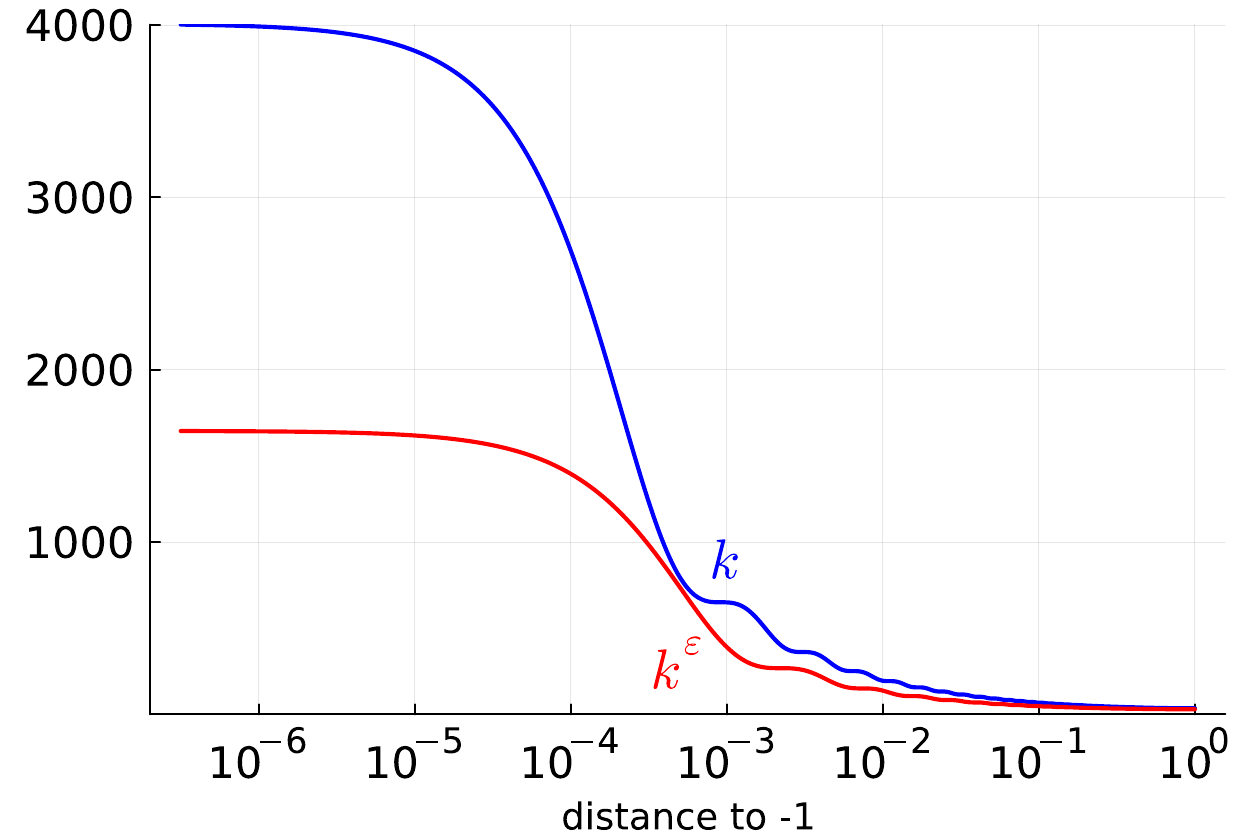}
        \caption{Fourier extension frame~\eqref{eq:numexampleapprox} for $n = \hat{n} = 101$ ($\hat{n}^\epsilon \approx 76.8$)}
    \end{subfigure}
    \begin{subfigure}{\linewidth}
        \includegraphics[width=.49\linewidth]{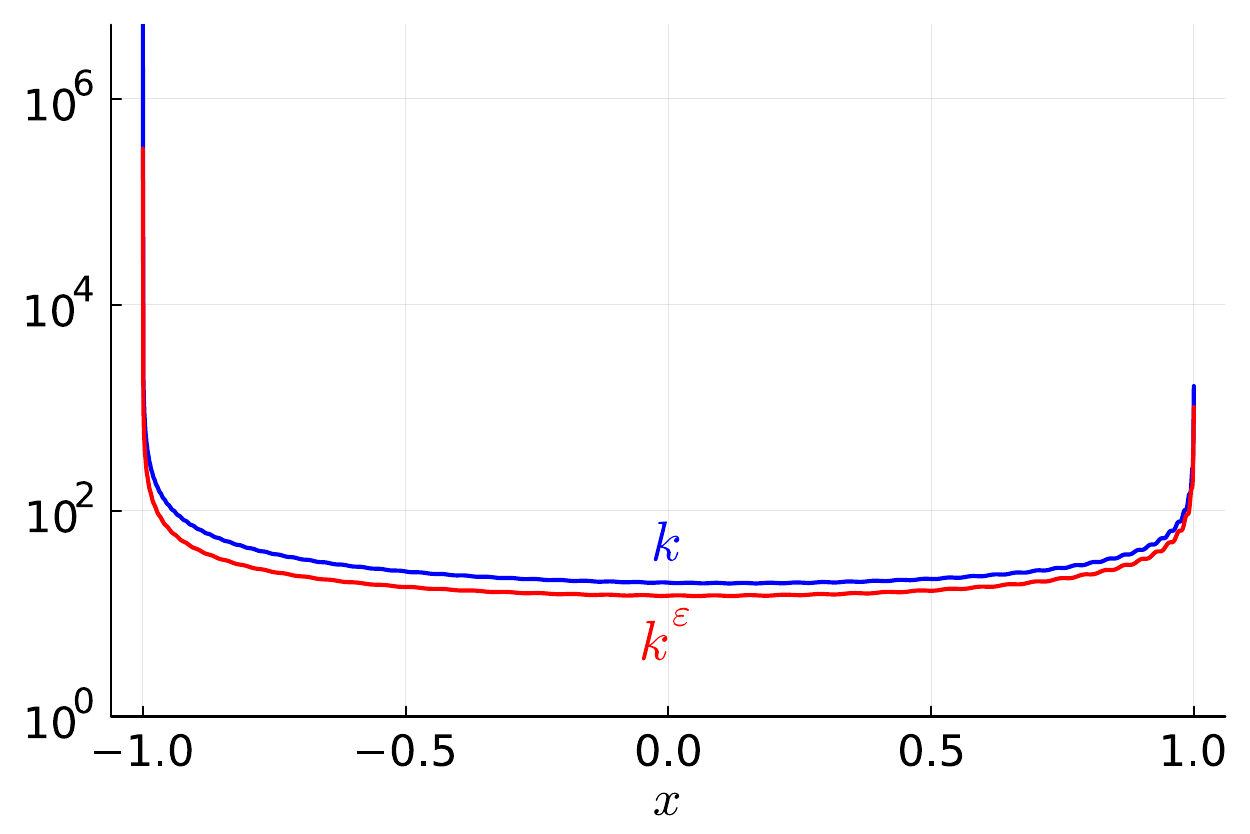}\hfill
        \includegraphics[width=.49\linewidth]{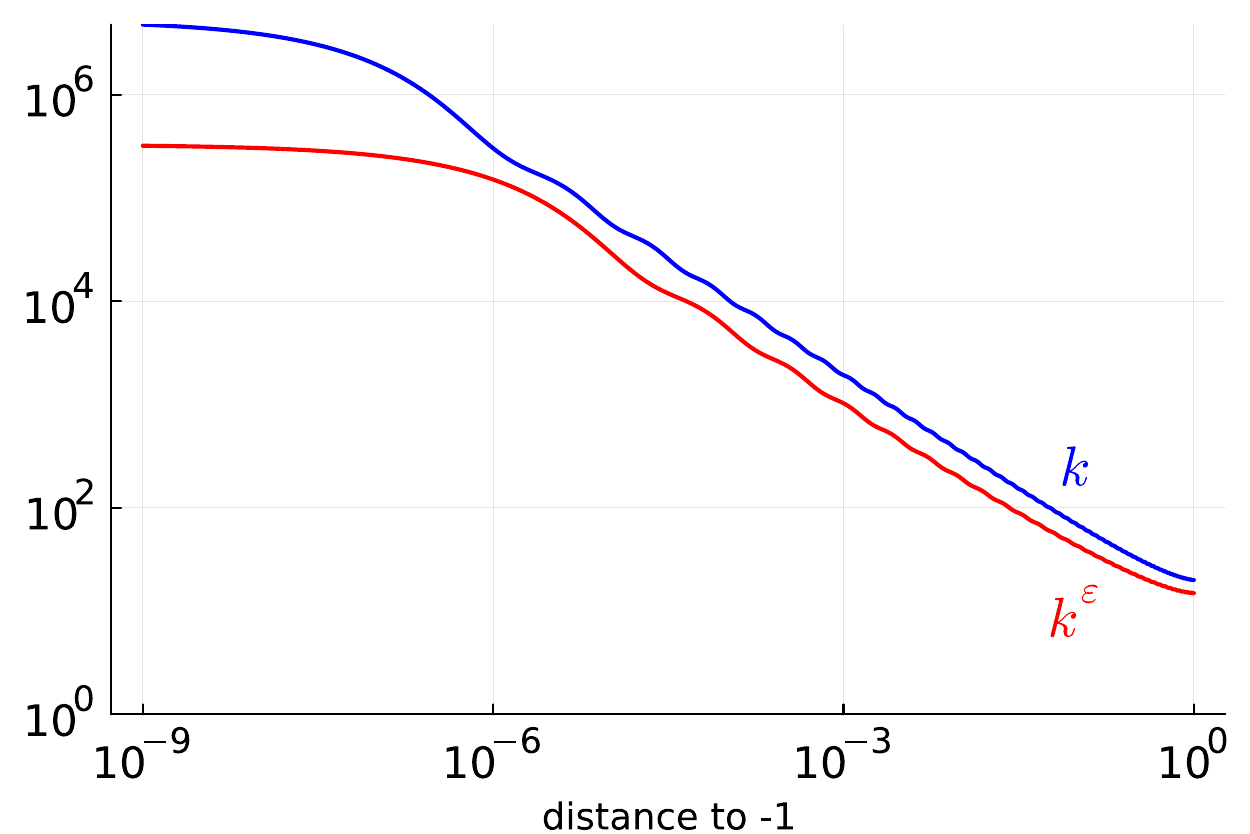}
        \caption{Legendre + weighted Legendre basis~\eqref{eq:sumframe} for $n = \hat{n} = 80$ ($\hat{n}^\epsilon \approx 54.8 $)}
    \end{subfigure}
    \caption{Comparison between $k$ and $k^\epsilon$ with $\epsilon = 10^{-14}$, for the numerical examples of Section~\ref{sec:numericalexamples}. We observe that $k^\epsilon$ and $k$ may differ significantly in their pointwise behaviour. }
    \label{fig:christoffel}
\end{figure}

\subsection{Backward stable computation of the Christoffel function} \label{sec:computabilitychrist}
In practice, the Christoffel function is often evaluated numerically to compute a near-optimal sampling distribution for a specific basis. This naturally raises the question of the expected accuracy of such numerical evaluations. All omitted proofs can be found in Appendix~\ref{sec:app2}.

Consider the following alternative characterization for the inverse (regularized) Christoffel function. 

\begin{proposition} \label{prop:grammatrixform}
    Let $\Phi \subset L^2(X,\rho)$ and $\epsilon > 0$. It holds that
    \begin{equation} \label{eq:christcharacterization1}
        k(x) = \phi(x) G^\dagger \phi(x)^* \qquad \text{and} \qquad k^\epsilon(x) = \phi(x) (G + \epsilon^2 I)^{-1} \phi(x)^*, 
    \end{equation}
    where $\phi(x) = \begin{bmatrix} \phi_1(x) & \dots & \phi_n(x) \end{bmatrix} \in \mathbb{C}^{1 \times n}$.
\end{proposition}

Since the Gram matrix is generally unknown in the case of a non-orthogonal basis, a workaround is to approximate it using a discrete Gram matrix computed over a dense grid~\cite{dolbeault2022optimal}. Note that this computation can be performed offline, i.e., it can be computed only once for every $\Phi$ and independently of the function to be approximated. 

More specifically, given a sampling operator\footnote{To avoid confusion with the sampling operator $\mathcal{M}$ introduced in \S\ref{sec:discretization}, we denote the present operator by $\mathcal{M}_{dg}$. The subscript emphasizes that the grid may be dense since the points are usually distributed suboptimally. Accordingly, we use $m_{dg}$ to denote the number of sample points rather than $m$.} $\mathcal{M}_{dg} \from L^\infty(X) \to \mathbb{C}^{m_{dg}}$ and the associated system matrix $A \in \mathbb{C}^{m_{dg} \times n}$ representing the operator $\mathcal{M}_{dg}\mathcal{T}$, we define 
\begin{equation} \label{eq:discretechristoffel}
    k^d(x) = \phi(x) (A^*A)^\dagger \phi(x)^* \qquad \text{and} \qquad k^{d,\epsilon}(x) = \phi(x) (A^*A + \epsilon^2 I)^{-1} \phi(x)^*.
\end{equation}
The following theorem shows that we can expect this discrete version to lie close to the exact one, whenever the sampling operator satisfies a two-sided norm equivalence. In particular, for a relative accuracy of $0 \leq \eta < 1$, it is required that
\begin{equation} \label{eq:normequivalence1}
    \frac{1}{1+\eta} \|v\|_{L^2_\rho}^2 \leq \|v\|_{\mathcal{M}_{dg}}^2 \leq \frac{1}{1-\eta} \|v\|_{L^2_\rho}^2, \qquad \forall v \in \SPAN(\Phi)
\end{equation}
and 
\begin{equation} \label{eq:normequivalence2}
    \frac{1}{1+\eta} \left( \|\mathcal{T}c\|_{L^2_\rho}^2 + \epsilon^2 \|c\|_2^2 \right) \leq \|\mathcal{T}c\|_{\mathcal{M}_{dg}}^2 + \epsilon^2 \|c\|_2^2 \leq \frac{1}{1-\eta} \left(\|\mathcal{T}c\|_{L^2_\rho}^2 + \epsilon^2 \|c\|_2^2 \right), \quad \forall c \in \mathbb{C}^n
\end{equation}
in the unregularized and regularized case, respectively. For the unregularized case, this result is analogous to~\cite[Lemma 3.3]{dolbeault2022optimal}. Furthermore, we show that the discrete Christoffel functions can be evaluated without the use of the normal equations $A^*A$ via~\eqref{eq:christcharacterization2}.

\begin{proposition} \label{prop:discretechristoffel}
    For any $0 \leq \eta < 1$, it holds that
    \begin{equation*} 
        \eqref{eq:normequivalence1} \; \Rightarrow \; \left\vert \frac{k(x) - k^d(x)}{k(x)} \right\vert \leq \eta \qquad \text{and} \qquad \eqref{eq:normequivalence2} \; \Rightarrow \; \left\vert \frac{k^\epsilon(x) - k^{d,\epsilon}(x)}{k^\epsilon(x)} \right\vert \leq \eta, \qquad \forall x \in X.
    \end{equation*}
    Furthermore,
    \begin{equation} \label{eq:christcharacterization2}
        k^d(x) = \norm{\phi(x) A^\dagger}_2^2 \qquad \text{and} \qquad k^{d,\epsilon}(x) = \norm{\phi(x) \begin{bmatrix} A \\ \epsilon I \end{bmatrix}^\dagger}_2^2.
    \end{equation}
\end{proposition}

We can now investigate whether the discrete Christoffel functions can be evaluated accurately using numerical algorithms. As discussed in Section~\ref{sec:backwardstability}, numerical algorithms are typically modeled as being backward stable, reflecting the presence of rounding errors both in the data and during execution.

Consider the mapping
\[
    (A, v) \mapsto \norm{v \begin{bmatrix} A \\ \epsilon I\end{bmatrix}^\dagger}_2^2
\]
which evaluates $k^{d,\epsilon}(x)$ when $v = \phi(x)$. Thm.~\ref{thm:computability} below shows that a backward stable algorithm yields accurate results whenever the regularization parameter $\epsilon$ is sufficiently large relative to $u$. The proof relies on perturbation results for pseudo-inverses, whose behaviour is well controlled in the presence of regularization.

One may ask whether an analogous result holds in the unregularized case, i.e., for
\begin{equation} \label{eq:discchristmap}
    (A, v) \mapsto \norm{v A^\dagger}_2^2
\end{equation}
which evaluates $k^{d}(x)$ when $v = \phi(x)$. In this case, it cannot be expected that $(A+\Delta A)^\dagger$ and $A^\dagger$ behave similarly when $A$ is numerically rank-deficient. Indeed, if $\sigma_{\min}(A) \lesssim \|\Delta A\|_2$, the perturbation $\Delta A$ generally alters the behaviour of the pseudo-inverse significantly. Thm.~\ref{thm:computability2} below formalizes this intuition. In conclusion, while $k^\epsilon$ can be evaluated numerically to high accuracy, no such guarantees exist for the evaluation of $k$ in the presence of numerical rank-deficiency.

\begin{theorem} \label{thm:computability}
    Given $A \in \mathbb{C}^{m_{dg} \times n}$, $v \in \mathbb{C}^{1 \times n}$ and $\epsilon > 0$, let
    \[
        y = \norm{v \begin{bmatrix} A \\ \epsilon I\end{bmatrix}^\dagger}_2^2  \qquad \text{and} \qquad \hat{y} = \norm{(v+\Delta v) \begin{bmatrix} (A+\Delta A) \\ \epsilon I\end{bmatrix}^\dagger}_2^2 
    \]
    for some $\|\Delta A\|_2 \leq u \Calg \|A\|_2$ and $\|\Delta v\|_2 \leq u \Calg \|v\|_2$ with $\Calg > 0$. Let $\epsilon = \alpha u \Calg \|A\|_2$ for some $0 < \alpha < (u\Calg)^{-1}$, then 
    \[
        \left\vert \frac{y - \hat{y}}{y} \right\vert \leq \frac{4+2\sqrt{2}}{\alpha} + \frac{6+4\sqrt{2}}{\alpha^2} \, .
    \]
\end{theorem}
\begin{proof}
    For notational convenience, we define
    \[
        \Aext = \begin{bmatrix} A \\ \epsilon I \end{bmatrix} \qquad \text{and} \qquad \Aexthat = \begin{bmatrix} A + \Delta A \\ \epsilon I \end{bmatrix}.
    \]
    Since $y,\hat{y} > 0$, we have
    \begin{equation} \label{eq:auxsplit}
        \left\vert \frac{y - \hat{y}}{y} \right\vert =  \frac{\left\vert \sqrt{y} - \sqrt{\hat{y}} \right\vert}{\sqrt{y}}  \frac{\sqrt{y} + \sqrt{\hat{y}}}{\sqrt{y}} \leq \frac{\left\vert \sqrt{y} - \sqrt{\hat{y}} \right\vert}{\sqrt{y}} \left( 2 + \frac{\lvert \sqrt{y} - \sqrt{\hat{y}} \rvert}{\sqrt{y}} \, \right).
    \end{equation}
    Using the (reverse) triangle inequality, we find that
    \begin{align*}
        \left\vert \sqrt{y} - \sqrt{\hat{y}} \right\vert &= \left\vert \norm{v \Aext^\dagger}_2 - \norm{(v+\Delta v)\Aexthat^\dagger}_2 \right\vert \\
        &\leq \norm{v \Aext^\dagger - (v+\Delta v)\Aexthat^\dagger}_2 \\
        &\leq \norm{v \left(\Aext^\dagger - \Aexthat^\dagger \right)}_2 + \norm{\Delta v \Aexthat^\dagger}_2.
    \end{align*}
    Consider the change-of-variables $w = v \Aext^\dagger$. Since $\Aext$ has full column rank, $\Aext^\dagger \Aext = I$ such that $v = w \Aext$, leading to
    \begin{align*}
        \frac{ \left\vert \sqrt{y} - \sqrt{\hat{y}} \right\vert}{\sqrt{y}} &\leq \frac{\norm{w \Aext\left(\Aext^\dagger - \Aexthat^\dagger \right)}_2 + \norm{\Delta v \Aexthat^\dagger}_2}{\norm{w}_2} \\
        &\leq \norm{\Aext\left(\Aext^\dagger - \Aexthat^\dagger \right)}_2 + \frac{\|\Delta v\|_2}{\|w\|_2} \norm{\Aexthat^\dagger}_2 \\
        &\leq \norm{\Aext \Aext^\dagger - \Aexthat \Aexthat^\dagger}_2 + \norm{(\Aexthat - \Aext) \Aexthat^\dagger}_2 + u \Calg \norm{\Aext}_2 \norm{\Aexthat^\dagger}_2 \\
        &\leq \left(2\|\Delta A\|_2 + u \Calg \norm{\Aext}_2 \right) \norm{\Aexthat^\dagger}_2 \\
        &\leq \frac{u \Calg}{\epsilon} \left( 2\|A\|_2 + \sqrt{\|A\|_2^2 + \epsilon^2} \right)
    \end{align*}
    where in the penultimate step we use~\cite[Thm.\ 2.3 and 2.4]{stewart1977perturbation} to bound the first term, and in the last step we use that $\norm{\Aexthat^\dagger}_2 \leq 1/\epsilon$ and $\norm{\Aext}_2 \leq \sqrt{\|A\|_2^2 + \epsilon^2}$. Substituting $\epsilon = \alpha u \Calg \|A\|_2$ and using $\alpha \leq (u\Calg)^{-1}$, we get
    \[
        \frac{ \left\vert \sqrt{y} - \sqrt{\hat{y}} \right\vert}{\sqrt{y}} \leq \frac{1}{\alpha} (2 + \sqrt{1 + (\epsilon/\|A\|_2)^2}) \leq \frac{2 + \sqrt{2}}{\alpha},
    \]
    which gives the final result when combined with~\eqref{eq:auxsplit}.
\end{proof}

\begin{theorem} \label{thm:computability2}
   Let $\Phi \subset L^\infty(X)$ and let $\mathcal{M}_{dg} \from L^\infty(X) \to \mathbb{C}^{m_{dg}}$ be an ideal sampler satisfying $A^*A = G$ where $A \in \mathbb{C}^{m_{dg} \times n}$ represents $\mathcal{M}_{dg} \mathcal{T}$. Let $\mathcal{T} = \sum_{i=1}^{\hat{n}} \sigma_i u_i(x) v_i^*$ be the singular value decomposition of the synthesis operator associated with $\Phi$. Then, for every
    \begin{equation} \label{eq:smallsigmaset}
        x \in \bigcup_{i \; : \; 0 < \sigma_i \leq u \sigma_1} \left\{ x \in  X \; : \; u_i(x) \neq 0 \right\}
    \end{equation}
    it holds that
    \[
        \sup_{\Delta A \; : \; \|\Delta A\|_2 \leq u \|A\|_2}\norm{\phi(x) (A+\Delta A)^\dagger}_2^2 = \infty.
    \]
\end{theorem}
\begin{proof}
    Since $A^*A = G$, $A$ has a (thin) singular value decomposition of the form $Q \Sigma V^*$, where $\Sigma \in \mathbb{R}^{\hat{n}\times\hat{n}}$ and $V \in \mathbb{C}^{n\times\hat{n}}$ are shared with the singular value decomposition of the synthesis operator $\mathcal{T}$, i.e., $v_i$ is the $i$-th column of $V$ and $(\Sigma)_{i,i} = \sigma_i$. For fixed $x$ satisfying~\eqref{eq:smallsigmaset}, consider any $j$ such that $0 < \sigma_j \leq u \sigma_1$ and $u_j(x) \neq 0$, and choose
    \[
        \Delta A = (-1 + \delta) \sigma_j q_j v_j^* \qquad \text{so that} \qquad A +\Delta A= \sum_{i=1, j\neq i}^{\hat{n}} \sigma_i q_i v_i^* + \delta \sigma_j q_j v_j^*,
    \]
    where $q_i$ is the $i$-th column of $Q \in \mathbb{C}^{m_{dg} \times \hat{n}}$ and $0 < \delta < 1$. Observe that $\|\Delta A\|_2 = (1-\delta)\sigma_j \leq u \|A\|_2$. Furthermore,
    \begin{align*}
        \norm{\phi(x) (A+\Delta A)^\dagger}_2^2 &= \norm{ \left( \sum_{i=1}^{\hat{n}} \sigma_i u_i(x) v_i^* \right) \left( \sum_{i=1}^{\hat{n}} \frac{1}{\sigma_i} v_i q_i^* + \frac{1-\delta}{\delta \sigma_j} v_j q_j^* \right) }_2^2 \\
        &= \norm{ \sum_{i=1}^{\hat{n}} u_i(x) q_i^* + \frac{1-\delta}{\delta} u_j(x) q_j^* }_2^2 \\
        &\geq \left(\norm{\frac{1-\delta}{\delta} u_j(x) q_j^*}_2 - \norm{ \sum_{i=1}^{\hat{n}} u_i(x) q_i^*}_2 \right)^2 \\
        &= \left( \frac{1-\delta}{\delta} \lvert u_j(x) \rvert - \sqrt{k(x)} \right)^2.
    \end{align*}
    This lower bound grows unboundedly as $\delta$ approaches zero, since $k(x)$ is finite and $\lvert u_j(x) \rvert \neq 0$.
\end{proof}


\section{Discretization of Fourier extension frames} \label{sec:fourierextension}

Approximation of functions on irregular domains $X \subset \mathbb{R}^d$ arises in many computational problems. Often, an orthonormal or Riesz basis is not available for such domains, such that numerical methods resort to using an orthonormal basis defined on a surrounding regular domain, such as a bounding box. Crucially, these bases are no longer orthonormal on the underlying domain $X$ itself; instead, they form a frame. The resulting redundancy leads to numerical rank-deficiency.

To illustrate the key ideas in a concrete setting, we focus on the univariate case. Specifically, for approximation on $X = [-W,W]$ with $W < 1/2$, one may use the Fourier basis defined on the extension $[-1/2,1/2]$:
\begin{equation} \label{eq:fourierextension}
    \Phi = \{ \exp(2 \pi i x k) \}_{k = -(n-1)/2}^{(n-2)/2},
\end{equation}
for odd $n$. Smooth functions, which may be non-periodic on $[-W,W]$, can be approximated by elements in $\SPAN(\Phi)$ with exponential convergence in $n$~\cite{huybrechsFourierExtensionNonperiodic2010}. Regularized approximation has been studied extensively in~\cite{adcockNumericalStabilityFourier2014}, where it is shown that regularization yields a slower decay of the approximation error but does not prevent convergence down to unit roundoff. Furthermore, note that $\hat{n} = n$, due to $\Phi$ being linearly independent.

Our goal is to analyze how regularization affects the discretization problem using the results of Section~\ref{sec:randomizedsampling}. This analysis hinges on a good understanding of the singular value decomposition of the synthesis operator $\mathcal{T}$. While little is known about this decomposition for general finite bases, in the case of the Fourier extension frame the singular vectors are the discrete prolate spheroidal wave functions (DPSWFs), which have been extensively studied in signal processing. For a detailed explanation of this connection, see~\cite[\S3.1]{matthysenFunctionApproximationArbitrary2018a}. DPSWFs are an indispensable tool for bandlimited extrapolation, introduced and thoroughly analyzed by Slepian and his collaborators~\cite{slepianProlateSpheroidalWave1978}. In this context, a notion of effective dimension is not new; see \cite{landauProlateSpheroidalWave1962} and~\cite[\S2.3]{daubechiesTenLecturesWavelets1992}.

\subsection{Typical singular value decomposition of a truncated frame} \label{sec:ridgesvd}
An infinite-dimensional sequence $\{\phi_i\}_{i=1}^\infty$ is called a frame for $L^2(X,\rho)$ if there exist constants $A,B > 0$ such that 
\begin{equation} \label{eq:frameinequality}
    A \|f\|_{L^2_\rho}^2 \leq \sum_{i=1}^\infty \lvert \langle f, \phi_i \rangle_{L^2_\rho} \rvert^2 \leq B \|f\|_{L^2_\rho}^2
\end{equation}
for all $f \in L^2(X,\rho)$~\cite[Def.\ 5.1.1]{christensen2003introduction}. This definition generalizes the notion of a Riesz basis by permitting redundancy. 

In~\cite[\S4]{fna1}, it is shown that sufficiently long finite subsequences of linearly independent and overcomplete frames are numerically rank-deficient\footnote{A set is linearly independent if every finite subsequence is linearly independent. A frame is overcomplete if there exists a sequence $c \in \ell^2(\mathbb{N})$ such that $\sum_{i=1}^\infty c_i \phi_i = 0$~\cite[\S3-4]{christensen2003introduction}.}. 
Since the Fourier extension sequence forms a linearly independent, overcomplete frame (with frame bounds $A = B = 1$), it follows that the truncated family~\eqref{eq:fourierextension} becomes numerically rank-deficient for sufficiently large $n$.

An important consequence of~\eqref{eq:frameinequality} is that for every $f \in L^2(X,\rho)$ there exist coefficients $c \in \ell^2(\mathbb{N})$ such that
\[
    f = \sum_{i=1}^\infty c_i \phi_i \qquad \text{and} \qquad \|c\|_2 \leq \|f\|_{L^2_\rho} / \sqrt{A}, \qquad \text{\cite[Lm.\ 5.5.5]{christensen2003introduction}}.
\]
Thus, although a frame expansion is generally non-unique, at least one representation has controlled norm of the coefficients. Since small coefficient norms are essential for small numerical errors as discussed in Part I, this property explains the relevance of frames for numerical approximation. Further examples of frames in numerical approximation can be found in~\cite{fna1,adcockFramesNumericalApproximation2020} and references therein. 

Figure~\ref{fig:typicalsvd} shows a typical singular value profile of the synthesis operator associated with a finite subsequence of a frame with $A = B = 1$. While the spectrum of the synthesis operator for the infinite-dimensional frame is contained in $\{0\} \cup [A,B]$, the finite-dimensional truncation exhibits additional singular values in the interval $(0,A)$. These spurious singular values arise from spectral pollution~\cite{daviesSpectralPollution2004} and are the cause for numerical rank-deficiency.

\begin{figure}
    \centering
    \includegraphics[width=.5\linewidth]{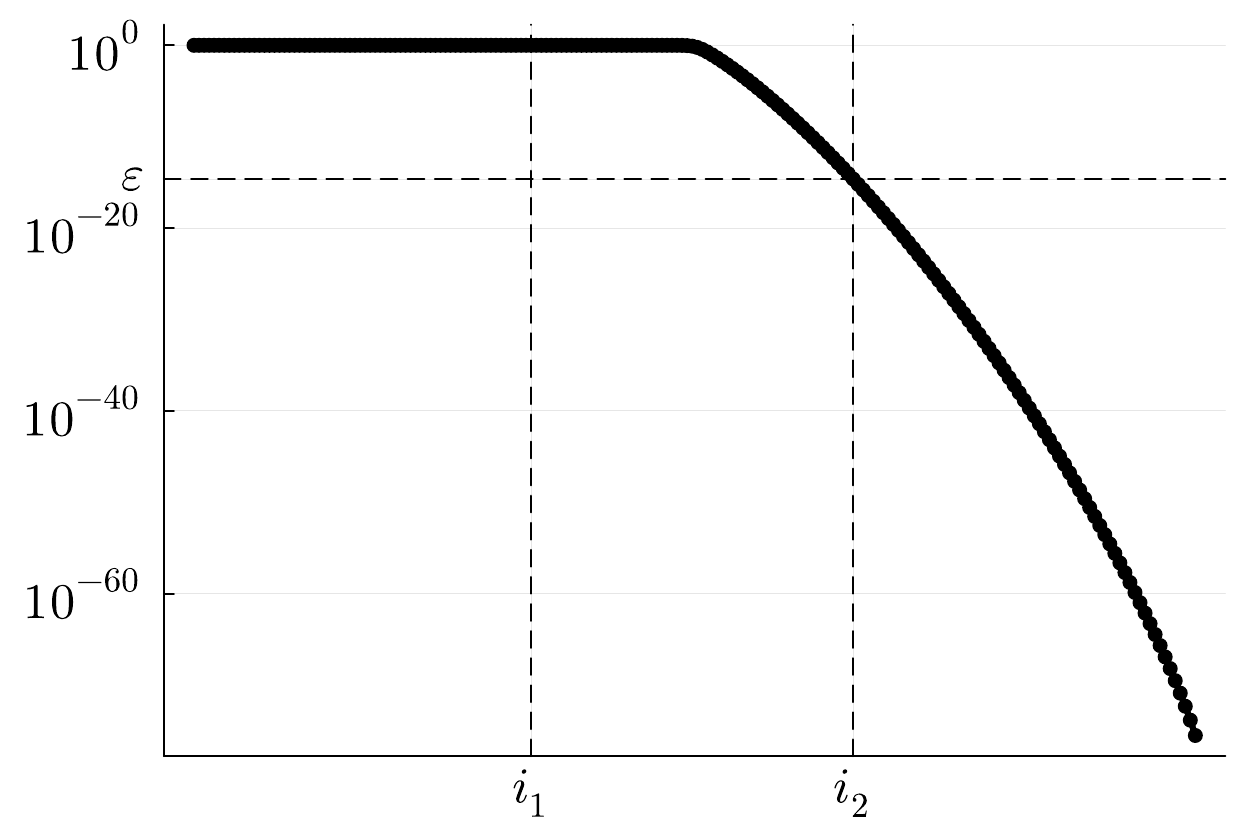}
    \caption{Typical singular value profile of the synthesis operator $\mathcal{T}$ associated with a finite subsequence of a linearly independent, overcomplete frame with frame bounds $A=B=1$. The spectrum can be divided into three regions based on the regularization parameter $\epsilon$, which is proportional to the unit roundoff $u$: a well-behaved region where $\sigma_i^2 \geq 1-\epsilon^2\text{ for }i \leq i_1$, a near-null region where $\sigma_i^2 \leq \epsilon^2 \text{ for } i > i_2$, and the intermediate plunge region between $i_1$ and $i_2$.}
    \label{fig:typicalsvd}
\end{figure}

One can split the singular value profile into different regimes based on the regularization parameter $\epsilon$, by identifying $i_1$ and $i_2$ so that 
\begin{equation} \label{eq:k1}
    \sigma_i^2 \geq 1-\epsilon^2  \text{ for } i \leq i_1
\end{equation}
and 
\begin{equation} \label{eq:k2}
    \sigma_i^2 \leq \epsilon^2  \text{ for } i > i_2.
\end{equation}
The region in between $i_1$ and $i_2$ is often referred to as the \textit{plunge region}~\cite{daubechiesTenLecturesWavelets1992}. These regions have a different influence on the effective dimension $\hat{n}^\epsilon$ and the the inverse regularized Christoffel function $k^\epsilon$. The proof of the theorem below can be found in Appendix~\ref{sec:app2}.

\begin{theorem} \label{thm:influenceregularization}
    Given $i_1$ and $i_2$ satisfying~\eqref{eq:k1} and~\eqref{eq:k2}, define $\Stail = \sum_{i = i_2 + 1}^{\hat{n}} \left( \sigma_i/\epsilon  \right)^2$ and $k^\epsilon_{\text{trunc}}(x) = \sum_{i=1}^{i_2} \lvert u_i(x) \rvert^2$ using the singular value decomposition of $\mathcal{T} = \sum_{i=1}^{\hat{n}} \sigma_i u_i(x) v_i^*$. Then,
    \begin{equation} \label{eq:boundonnlambda}
        \hat{n}^\epsilon  \leq i_2 + \Stail
    \end{equation}
    and 
    \begin{equation} \label{eq:boundonridgeleveragescores1}
        \frac{1}{2} k^\epsilon_{\text{trunc}}(x) \leq k^\epsilon (x) \leq k^\epsilon_{\text{trunc}}(x) + \Stail \max_{ i_2 < i \leq \hat{n}} \lvert u_i(x) \rvert^2.
    \end{equation}
    Furthermore, if $\Phi$ is linearly independent and uniformly bounded, $\|\phi_{i}\|_{L^\infty} \leq 1 \; (1 \leq i \leq n)$, then 
    \begin{equation} \label{eq:boundonridgeleveragescores2}
        \| k^\epsilon \|_{L^\infty} \leq \frac{n}{1-\epsilon^2} + ((i_2 - i_1) + \Stail) \max_{ i_1 < i \leq \hat{n}} \| u_i \|_{L^\infty}^2.
    \end{equation}
\end{theorem}

It becomes clear that regularization reduces the influence of small singular values and the associated weakly represented directions. The quantity $\Stail$, which depends heavily on the decay of the singular values, plays a central role. If $\Stail$ is bounded independently of $n$, then $\hat{n}^\epsilon$ scales with $i_2$. In practice, the decay of the singular values typically exhibits a mild dependence on $n$ so that $\Stail$ grows slowly with $n$; see e.g.\ Section~\ref{sec:effectivedofs}. 

We include a comparison with a truncated variant of the inverse regularized Christoffel function\footnote{In a fully discrete setting, $k^\epsilon_{\text{trunc}}$ is linked to leverage scores associated with the best rank-$i_2$ subspace~\cite{alaoui2015fast}.}, $k^\epsilon_{\text{trunc}}$, since many computational approaches for Christoffel sampling are—often implicitly—based on this quantity rather than on $k^\epsilon$. However,~\eqref{eq:boundonridgeleveragescores1} does not provide a bound of the form $k^\epsilon_{\text{trunc}} \lesssim k^\epsilon$, such that it remains unclear whether $k^\epsilon_{\text{trunc}}$ is associated with a near-optimal sampling strategy. In any case, our analysis indicates that the weakly represented directions $u_i$, corresponding to $i > i_2$, are not negligible for the behaviour of $k^\epsilon$.

\subsection{Analysis of the effective dimension} \label{sec:effectivedofs}
The following theorem formalizes the dependence of $\hat{n}^\epsilon$ on $n$, $W$ and $\epsilon$ for the one-dimensional Fourier extension frame~\eqref{eq:fourierextension}

\begin{theorem} \label{thm:effectivedof}
    For the basis~\eqref{eq:fourierextension} with $W < 1/2$,
    \begin{equation} \label{eq:thm11_1}
        i_2 = \lceil 2 n W \rceil + 2 + \frac{2}{\pi^2}\log\left(\frac{8}{\epsilon^2}\right)\log(4n)
    \end{equation}
    satisfies~\eqref{eq:k2}. For this choice of $i_2$, one has
    \begin{equation} \label{eq:thm11_2}
        \Stail \leq \frac{2}{\pi^2}\log(4n)
    \end{equation}
    such that the effective dimension $\hat{n}^\epsilon $ can be bounded by 
    \begin{equation} \label{eq:thm11_3}
        \hat{n}^\epsilon  \leq \lceil 2nW \rceil + 2 + \frac{2}{\pi^2} \left( \log\left(\frac{8}{\epsilon^2}\right) + 1 \right) \log(4n).
    \end{equation}
\end{theorem}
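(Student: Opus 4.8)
The plan is to reduce everything to one quantitative decay estimate for the singular values $\sigma_i$ of $\mathcal{T}_n$, and then to carry out an elementary geometric-series calculation. First I would identify what the $\sigma_i^2$ actually are. Since $H = L^2([-W,W])$, the Gram matrix $G_n = \mathcal{T}_n^*\mathcal{T}_n$ has entries $\langle \phi_{k,n},\phi_{l,n}\rangle_H = \int_{-W}^{W} e^{2\pi i x(k-l)}\diff x$, which equals $2W$ on the diagonal and $\sin(2\pi W(k-l))/(\pi(k-l))$ off it. This is precisely the (discrete) prolate matrix associated with the time--band-limiting operator, so the eigenvalues $\sigma_i^2$ of $G_n$ are exactly the eigenvalues governing the discrete prolate spheroidal wave functions referenced in~\S\ref{sec:fourierextension}. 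Their qualitative structure matches Figure~\ref{fig:typicalsvd}: roughly $\lceil 2nW\rceil$ eigenvalues near the top, a plunge region, and a super-exponentially decaying tail.

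The single analytic input I would isolate, and which I expect to be the main obstacle, is the explicit tail bound
\[
    \sigma_i^2 \leq 8\,\exp\!\left(-\frac{\pi^2\,(i - \lceil 2nW\rceil - 2)}{2\log(4n)}\right),
\]
valid for $i > \lceil 2nW\rceil + 2$. Everything in the theorem is engineered around this estimate: the shift $\lceil 2nW\rceil + 2$ is the location where the eigenvalues leave the plateau, the rate $\pi^2/(2\log(4n))$ encodes the Landau--Widom constant $2/\pi^2$ for the plunge-region width together with the $\log n$ dependence coming from the effective bandwidth of the discretization. Getting these constants sharp is the delicate step; I would obtain the bound from the DPSWF eigenvalue asymptotics of Slepian and collaborators~\cite{slepianProlateSpheroidalWave1978}, adapted to the finite-$n$ Fourier-extension setting as in~\cite{adcockNumericalStabilityFourier2014,matthysenFunctionApproximationArbitrary2018a}, rather than re-deriving prolate theory from scratch.

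Granting this estimate, the remaining three claims are short calculations. Set $\beta \coloneq \pi^2/(2\log(4n))$ and note that for $i = i_2 = \lceil 2nW\rceil + 2 + \tfrac{2}{\pi^2}\log(8/\epsilon^2)\log(4n)$ the exponent is $-\beta\cdot\beta^{-1}\log(8/\epsilon^2) = -\log(8/\epsilon^2)$, so $\sigma_{i_2}^2 \leq 8\cdot(\epsilon^2/8) = \epsilon^2$. Since the singular values are nonincreasing, $\sigma_i^2 \leq \epsilon^2$ for all $i > i_2$, which is exactly~\eqref{eq:k2}; this proves the first claim~\eqref{eq:thm11_1}.

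For the tail sum, I would bound the geometric series
\[
    \Stail = \frac{1}{\epsilon^2}\sum_{i=i_2+1}^{n} \sigma_i^2 \leq \frac{1}{\epsilon^2}\sum_{i=i_2+1}^{\infty} 8\,e^{-\beta(i-\lceil 2nW\rceil-2)} = \frac{1}{\epsilon^2}\cdot\frac{\epsilon^2}{e^{\beta}-1} = \frac{1}{e^{\beta}-1},
\]
where the middle equality uses that the $i=i_2+1$ term equals $\epsilon^2 e^{-\beta}$ and summing the resulting geometric tail with ratio $e^{-\beta}$ gives $\epsilon^2/(e^\beta-1)$. Applying the elementary inequality $e^{\beta}-1 \geq \beta$ yields $\Stail \leq 1/\beta = \tfrac{2}{\pi^2}\log(4n)$, which is~\eqref{eq:thm11_2}. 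Finally, Theorem~\ref{thm:influenceregularization} gives $n^\epsilon \leq i_2 + \Stail$, and substituting the expressions for $i_2$ and the bound on $\Stail$ produces
\[
    n^\epsilon \leq \lceil 2nW\rceil + 2 + \frac{2}{\pi^2}\log\!\left(\frac{8}{\epsilon^2}\right)\log(4n) + \frac{2}{\pi^2}\log(4n) = \lceil 2nW\rceil + 2 + \frac{2}{\pi^2}\left(\log\!\left(\frac{8}{\epsilon^2}\right)+1\right)\log(4n),
\]
which is~\eqref{eq:thm11_3}. Thus, apart from the prolate eigenvalue estimate, the proof is self-contained given the earlier results of the paper.
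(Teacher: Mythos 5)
Your reduction is structurally the same as the paper's: identify the Gram matrix $G_n = \mathcal{T}_n^*\mathcal{T}_n$ with the prolate matrix, so that $\sigma_i(\mathcal{T}_n)^2 = \lambda_{i-1}(n,W)$, import a quantitative bound on those eigenvalues, and conclude with Theorem~\ref{thm:influenceregularization}. The paper's proof is essentially a two-line citation: \eqref{eq:k2} for the stated $i_2$ is \cite[Corollary 1]{karnikImprovedBoundsEigenvalues2021}, and \eqref{eq:thm11_2} is \cite[Corollary 2]{karnikImprovedBoundsEigenvalues2021}. Your single exponential-decay hypothesis is equivalent to the first of these applied at every threshold simultaneously (solving your bound for the index at which the right-hand side equals $\epsilon^2$ returns exactly the stated $i_2$), and your geometric-series computation, including $\Stail \leq 1/(e^{\beta}-1) \leq 1/\beta = \tfrac{2}{\pi^2}\log(4n)$, correctly re-derives the content of the second from it. So, conditional on the decay estimate, your argument is sound and even slightly more self-contained than the paper's, replacing two imported corollaries by one.

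The genuine gap is the provenance of that estimate. You propose to obtain it from Slepian's analysis in \cite{slepianProlateSpheroidalWave1978}, adapted as in \cite{adcockNumericalStabilityFourier2014,matthysenFunctionApproximationArbitrary2018a}. Those results are asymptotic expansions, valid as $n \to \infty$ in specific index regimes with unquantified error terms; they cannot deliver a bound that holds for every finite $n$ with the explicit constants $8$, $2/\pi^2$, $\log(4n)$ and the offset $\lceil 2nW \rceil + 2$. Since the theorem is precisely a non-asymptotic statement about these constants, an asymptotic derivation would at best prove it ``for all sufficiently large $n$'', which is weaker than what is claimed. Non-asymptotic bounds of exactly this form are the main contribution of \cite{karnikImprovedBoundsEigenvalues2021}, which is the reference the paper's proof rests on; your proof closes once the decay estimate is sourced there instead of to Slepian. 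A secondary detail: the $i_2$ of \eqref{eq:thm11_1} is generally not an integer, so the sum defining $\Stail$ runs over integers $i > i_2$, and its first term can be nearly $\epsilon^2$ rather than $\epsilon^2 e^{-\beta}$; your series then only gives $\Stail \leq 1/(1-e^{-\beta})$, which exceeds $1/\beta$. Quoting \cite[Corollary 2]{karnikImprovedBoundsEigenvalues2021} directly, or working with $\lceil i_2 \rceil$ (which preserves \eqref{eq:k2} and costs at most $1$ in \eqref{eq:thm11_3}), repairs this.
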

\begin{proof}
    Note that $\sigma_i^2$ equals the $i$-th eigenvalue of the Gram matrix $G$. Furthermore, $G$ is exactly the \textit{prolate matrix}~\cite[eq.\ (1)]{karnikImprovedBoundsEigenvalues2021}. Therefore, $\sigma_i^2 = \lambda_{i-1}$, for all $i = 1, \dots, n$, where the latter is analyzed in \cite{karnikImprovedBoundsEigenvalues2021}. It follows from~\cite[Corollary 1]{karnikImprovedBoundsEigenvalues2021} that~\eqref{eq:k2} holds for~\eqref{eq:thm11_1}. Moreover,~\eqref{eq:thm11_2} can be deduced by~\cite[Corollary 2]{karnikImprovedBoundsEigenvalues2021}. The final result is a consequence of Thm.~\ref{thm:influenceregularization}. 
\end{proof}

A numerical verification of Thm.~\ref{thm:effectivedof} is shown on Figure~\ref{fig:dofs} for $n = 141$, $\epsilon = 100 \, \machepssp \approx 10^{-5}$ and $\epsilon = 100 \, \machepsdp \approx 2 \times 10^{-14}$, where $\machepssp$ and $\machepsdp$ denote the unit round-offs associated with IEEE single and double precision floating point numbers, respectively. For implementation details, we refer to~\cite{githubrepo}.

Both theory and numerical verification show that the effective dimension essentially scales as $2nW$. This formalizes the intuition that increased redundancy, i.e., smaller $W$, results in a reduced amount of required information. Observe that $\hat{n}^\epsilon  = \mathcal{O}(n)$ such that, asymptotically, the number of samples needed for regularized and unregularized weighted least squares approximation is similar, namely $\mathcal{O}(n \log(n))$.

\begin{figure}
    \centering
    \begin{subfigure}{.49\linewidth}
        \centering 
        \includegraphics[width=\linewidth]{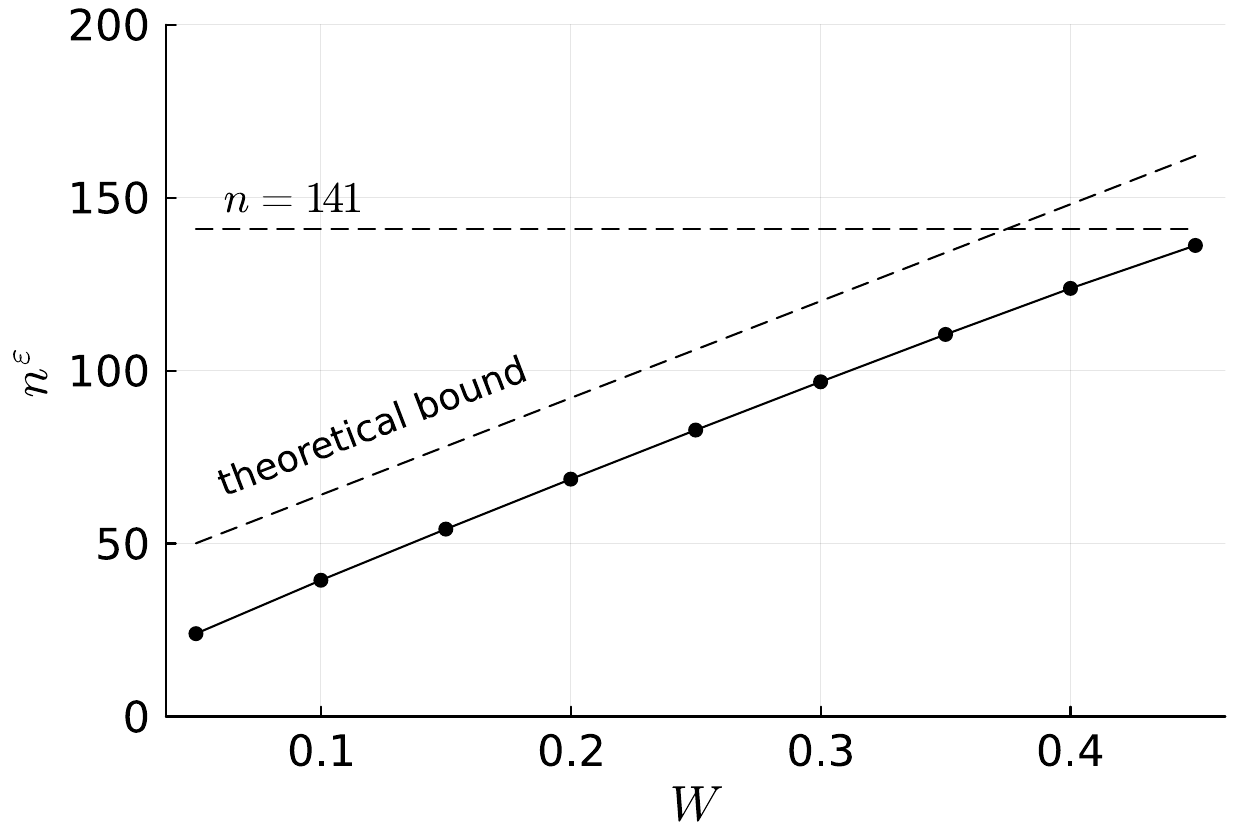}
        \caption{$\epsilon = 100 \, \machepssp \approx 10^{-5}$}
    \end{subfigure}
    \begin{subfigure}{.49\linewidth}
        \centering 
        \includegraphics[width=\linewidth]{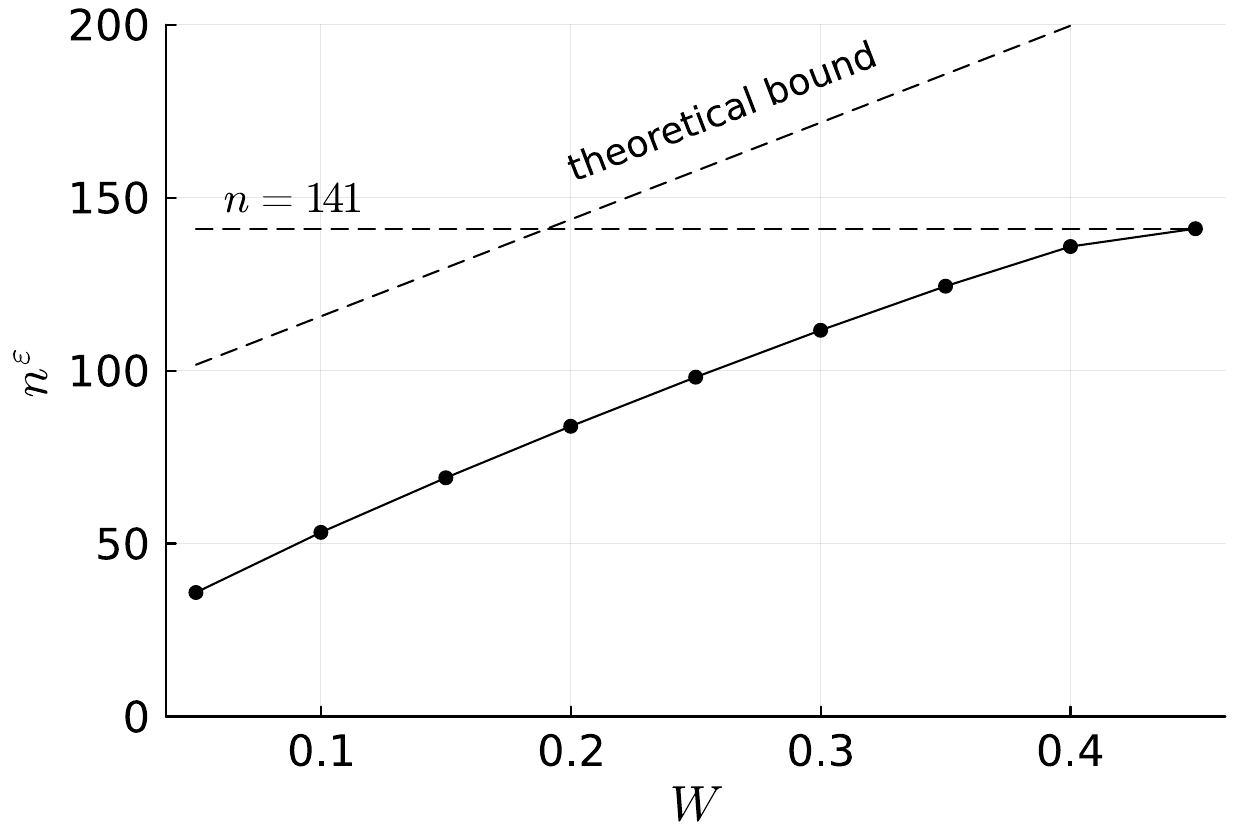}
        \caption{$\epsilon = 100 \, \machepsdp \approx 2 \times 10^{-14}$}
    \end{subfigure}
    \caption{The effective dimension $\hat{n}^\epsilon $ for the univariate Fourier extension frame~\eqref{eq:fourierextension} as a function of the domain parameter $W$, for $n = 141$. The dashed lines illustrate $\hat{n}^\epsilon \leq n = 141$ and the theoretical bound $\hat{n}^\epsilon  \leq \lceil 2nW \rceil + 2 + \frac{2}{\pi^2} \left( \log(8/\epsilon ^2) + 1 \right) \log(4n)$ stated in Thm.~\ref{thm:effectivedof}.}
    \label{fig:dofs} 
\end{figure}

\subsection{Analysis of uniformly random sampling} \label{sec:uniformsampling}
In many applications, one cannot freely choose the sample distribution. Another interesting question to answer in this regard is how many uniformly random samples are needed for accurate approximation. As follows from Prop.~\ref{prop:christoffel} and Thm.~\ref{thm:christoffel2}, this number is proportional to the maximum of $k$ and $k^\epsilon$ for unregularized and regularized least squares fitting, respectively. The following theorem shows that $\|k\|_{L^\infty} = \mathcal{O}(n^2)$.

\begin{theorem} \label{thm:uniformsampling1}
    For the basis~\eqref{eq:fourierextension} with $W < 1/2$, $k$ satisfies
    \[
        \|k\|_{L^\infty} \geq k(W) \sim n^2 \frac{\pi}{2} \cot(W\pi) \qquad \text{as $n \to \infty$}.
    \]
\end{theorem}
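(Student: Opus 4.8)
The plan is to start from the variational description of the inverse Christoffel function. Since $k_n$ is the diagonal of the reproducing kernel of $V_n$ viewed inside $L^2([-W,W])$, it has the extremal form
\[
    k_n(t) = \sup_{v \in V_n \setminus \{0\}} \frac{|v(t)|^2}{\|v\|_{L^2([-W,W])}^2}, \qquad \text{equivalently} \qquad \frac{1}{k_n(t)} = \min_{\substack{v \in V_n \\ v(t)=1}} \int_{-W}^{W} |v(s)|^2 \diff s .
\]
Here every competitor $v$ is a trigonometric polynomial of degree at most $N = (n-1)/2$. Because $W \in [-W,W]$, the inequality $\|k_n\|_{L^\infty[-W,W]} \geq k_n(W)$ is immediate, so it suffices to establish the stated asymptotic equivalence for the single endpoint value $k_n(W)$. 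A first attempt with the Dirichlet kernel $v(t) = D_N(t-W)/n$ gives only $\int_{-W}^{W}|v|^2 \approx \tfrac{1}{2n}$ and hence $k_n(W) \gtrsim 2n$: this already shows that the extremal polynomial is genuinely subtle, since the true minimum is of order $n^{-2}$, forcing $v$ to park almost all of its mass \emph{outside} $[-W,W]$ while still reaching $v(W)=1$.

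The second step is to remove the trigonometric structure by the substitution $z = e^{2\pi i t}$, which maps $V_n$ onto $\{z^{-N} q(z) : \deg q \leq n-1\}$ and sends $[-W,W]$ to the circular arc $\Gamma_W = \{ e^{2\pi i t} : |t| \leq W\}$. On $|z|=1$ one has $|v(t)|^2 = |q(z)|^2$ and $|\diff z| = 2\pi \diff t$, so that
\[
    k_n(W) = 2\pi \sup_{\deg q \leq n-1} \frac{|q(z_0)|^2}{\int_{\Gamma_W} |q(z)|^2 \, |\diff z|}, \qquad z_0 = e^{2\pi i W}.
\]
Thus $k_n(W)$ equals $2\pi$ times the inverse Christoffel function of degree $n-1$ for arclength measure on $\Gamma_W$, evaluated at its endpoint $z_0$. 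Since $W < 1/2$ the arc is a proper sub-arc (angular half-width $\alpha = 2\pi W < \pi$), and its endpoint is a \emph{hard edge}. At such an edge the Christoffel function scales like $(\text{degree})^{-2}$ rather than the bulk rate $(\text{degree})^{-1}$, which is exactly the source of the $n^2$ factor in the claim.

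To pin the constant I would invoke the hard-edge universality of Christoffel functions, whose governing object is the logarithmic-equilibrium measure $\nu$ of $\Gamma_W$. A standard computation gives the density of $\nu$ (in the angle $\theta = 2\pi t$) proportional to $\cos(\theta/2)/\sqrt{\sin^2(\pi W) - \sin^2(\theta/2)}$; expanding near the endpoint $\theta = 2\pi W$ yields an inverse-square-root blow-up with coefficient $\sqrt{\cot(\pi W)}$, because $\tfrac{d}{d\theta}\sin^2(\theta/2) = \tfrac12 \sin\theta$ produces the factor $\cot(\alpha/2) = \cot(\pi W)$. It is precisely this edge coefficient that propagates into the Christoffel constant; combined with the universal limiting (Paley--Wiener / $\mathrm{sinc}$-type) kernel that supplies the factor $\pi/2$ and with the $2\pi$ from the change of measure, the bookkeeping returns $k_n(W) \sim n^2 \tfrac{\pi}{2}\cot(\pi W)$. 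The sanity checks are consistent: as $W \to \tfrac12$ the constant vanishes (the basis becomes orthonormal on the full period, $k_n \equiv n$, no endpoint blow-up), and as $W \to 0$ it behaves like $1/(2W)$, reflecting increasing redundancy.

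The main obstacle is making this hard-edge constant rigorous and sharp rather than merely identifying the rate. I see two routes. The potential-theoretic route uses Totik-type comparison of Christoffel functions under restriction to subarcs, reducing to a model Chebyshev weight on an interval whose endpoint asymptotics are explicit; the delicate point is transporting the constant through the conformal straightening of $\Gamma_W$ without corrupting the $\cot(\pi W)$ factor. The alternative, more hands-on route exploits the prolate connection already used in Theorem~\ref{thm:effectivedof}: diagonalising the restriction in the discrete prolate spheroidal basis gives $k_n(W) = \sum_i |\psi_i(W)|^2/\lambda_i$, into which Slepian's sharp asymptotics for the eigenvalues $\lambda_i$ and the endpoint values $\psi_i(W)$ can be inserted. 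The subtlety here is that the sum is \emph{not} dominated by the bulk $\lambda_i \approx 1$ alone; the plunge-region terms, where $\lambda_i \to 0$, contribute at the same order, so the estimate must track the joint decay of $\psi_i(W)^2$ and $\lambda_i$ across the plunge region. In either approach I would obtain the lower bound by exhibiting an explicit near-extremal polynomial concentrated at $W$ (a prolate-tapered Dirichlet kernel whose mass sits in $[W,\tfrac12]\cup[-\tfrac12,-W]$), and the matching upper bound by a dual weighted-quadrature / Markov--Bernstein argument, both limits being governed by the same half-line Paley--Wiener extremal problem.
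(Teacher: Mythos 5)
Your reduction is correct as far as it goes: the inequality $\|k_n\|_{L^\infty[-W,W]} \geq k_n(W)$ is indeed immediate, the substitution $z = e^{2\pi i t}$ faithfully converts $k_n(W)$ into $2\pi$ times the endpoint value of the degree-$(n-1)$ Christoffel--Darboux kernel for arclength measure on the arc $\Gamma_W$, and your equilibrium-measure computation (edge coefficient $\sqrt{\cot(\pi W)}$) does predict exactly the constant $\tfrac{\pi}{2}\cot(\pi W)$ of the theorem. The genuine gap is that the decisive step---a sharp $n^2$ asymptotic \emph{with constant} for a Christoffel function at the hard edge of a circular arc---is never established. You invoke ``hard-edge universality'' plus ``bookkeeping'' with a universal kernel, but you cite no theorem that applies to arclength measure on an arc at its endpoint, and you yourself flag both of your candidate routes as unresolved: in the potential-theoretic route, the constant must survive the straightening of the arc, which is precisely where it could be corrupted; in the prolate route, the claim that the plunge-region terms of $\sum_i |\psi_i(W)|^2/\lambda_i$ contribute at the same order as the bulk is exactly the hard estimate, since a priori a few terms with exponentially small $\lambda_i$ could dominate the sum and destroy any control of the constant. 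As written, the proposal identifies the correct answer but does not prove it. The paper avoids all of this by relating $k_n(x) = K_n(x/W,x/W)/W$ to the Fourier extension kernel analysed in~\cite{webb2020pointwise} and reading off the endpoint asymptotics from~\cite[Theorem 4.1]{webb2020pointwise} together with~\cite[eq. 8]{webb2020pointwise}; that citation supplies precisely the sharp endpoint result your sketch is missing, specialized to this frame.

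If you want to complete your own route without that citation, there is a cleaner path than conformal straightening, namely the classical even/odd decomposition $v(t) = p(\cos 2\pi t) + \sin(2\pi t)\, q(\cos 2\pi t)$ with $\deg p \leq N$, $\deg q \leq N-1$, $n = 2N+1$. The cross terms integrate to zero over the symmetric interval, so $k_n(W)$ splits \emph{exactly} (no asymptotics yet) as
\[
    k_n(W) = K_{\mu_1,N+1}(c_W,c_W) + \sin^2(2\pi W)\, K_{\mu_2,N}(c_W,c_W), \qquad c_W = \cos(2\pi W),
\]
where $K_{\mu_j,m}$ are algebraic Christoffel--Darboux kernels on $[c_W,1]$ for the generalized Jacobi measures $d\mu_1 = dx/(\pi\sqrt{1-x^2})$ and $d\mu_2 = \sqrt{1-x^2}\,dx/\pi$. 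The evaluation point $c_W$ is the left endpoint of $[c_W,1]$, where both weights are smooth and positive, so standard hard-edge Christoffel asymptotics for generalized Jacobi weights on an interval apply; inserting them (equilibrium density of $[c_W,1]$ has edge coefficient $1/(\pi\sqrt{1-c_W})$) gives $K_{\mu_1,N+1}(c_W,c_W) \sim \pi\cot(\pi W)(N+1)^2$ and $\sin^2(2\pi W)K_{\mu_2,N}(c_W,c_W) \sim \pi\cot(\pi W)N^2$, whose sum is $\sim \tfrac{\pi}{2}\cot(\pi W)\,n^2$. That would turn your sketch into a proof and confirms, independently, that your predicted constant is the right one.
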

\begin{proof}
    In~\cite{webb2020pointwise} a related kernel is being analyzed as a means to determine the pointwise convergence behaviour of Fourier extensions. More specifically, in~\cite{webb2020pointwise}, the kernel $K_N(x,y)$ is analyzed, which we relate to the inverse Christoffel function $k$ in our setting via
    \[
        k(x) = K_n(x/W, x/W)/W, \qquad x \in [-W,W].
    \]
    We can, therefore, restrict ourselves to the analysis of $K_N(x,x)$ for $x \in [-1,1]$. The result can now be obtained by combining~\cite[Theorem 4.1]{webb2020pointwise} with~\cite[eq. 8]{webb2020pointwise}, which is valid for large degree $n$ and for $x \in [1-\delta,1]$ for some $\delta > 0$. After expanding all quantities for large $n$ and plugging in $x = 1$, one obtains the asymptotic result via lengthy but straightforward calculations. The inequality $\|k\|_{L^\infty} \geq k(W)$ holds since $\sup_{x \in X} \lvert k(x) \rvert = \max_{x \in X} \lvert k(x) \rvert$.
\end{proof}

Using~\eqref{eq:boundonridgeleveragescores2}, we obtain a bound for $k^\epsilon$ that depends on the size of the plunge region $i_2 - i_1$ and $\Stail$. The latter is shown to be $\mathcal{O}(\log(n))$ in Thm.~\ref{thm:effectivedof}. The following theorem proves that the size of the plunge region is $\mathcal{O}(\log(n))$ as well.

\begin{theorem} \label{thm:uniformsampling2}
    For the basis~\eqref{eq:fourierextension} with $W < 1/2$,
    \begin{equation} \label{eq:i1}
        i_1 = \lfloor 2nW \rfloor - 1 - \frac{2}{\pi^2}\log\left(\frac{8}{\epsilon^2}\right)\log(4n)
    \end{equation}
    satisfies~\eqref{eq:k1}. Moreover, the maximum of $k^\epsilon$ can be bounded by 
    \begin{equation} \label{eq:fourierextension_k}
        \| k^\epsilon \|_{L^\infty} \leq \frac{n}{1-\epsilon^2} + \left( 4 + \frac{2}{\pi^2} \left( 2\log\left( \frac{8}{\epsilon^2} \right) + 1 \right) \log(4n) \right) \max_{ i_1 < i \leq n} \| u_i \|_{L^\infty}^2.
    \end{equation}
\end{theorem}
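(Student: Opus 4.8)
The plan is to prove the two claims separately, both resting on the eigenvalue analysis of the prolate matrix that already underpins Theorem~\ref{thm:effectivedof}. Recall from that proof that $\sigma_i(\mathcal{T}_n)^2 = \lambda_{i-1}$, where $\lambda_0 \geq \lambda_1 \geq \cdots \geq \lambda_{n-1}$ are the eigenvalues of the prolate matrix studied in~\cite{karnikImprovedBoundsEigenvalues2021}, and that~\eqref{eq:k2} was established using the bound~\cite[Corollary 1]{karnikImprovedBoundsEigenvalues2021} controlling how far past the plunge region the eigenvalues have decayed below threshold. To show that $i_1$ satisfies~\eqref{eq:k1}, I would apply the same class of estimates from~\cite{karnikImprovedBoundsEigenvalues2021}, now to the leading eigenvalues, to control how close the first $\lambda_{i-1}$ are to $1$. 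The transition band of the prolate matrix is centered near the index $2nW$, so the threshold below which $\lambda_{i-1} \geq 1-\epsilon^2$ lies at a distance from $2nW$ that mirrors the offset defining $i_2$ in~\eqref{eq:thm11_1}; this mirror-image offset is exactly what produces the floor and the subtracted logarithmic term in~\eqref{eq:i1}.

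For the second claim I would invoke the uniform bound~\eqref{eq:boundonridgeleveragescores2} of Theorem~\ref{thm:influenceregularization}, after first verifying its hypotheses for the Fourier extension frame. The functions $\phi_{k,n}(x) = \exp(2\pi i k x)$ have distinct integer frequencies and are therefore linearly independent, and they satisfy $\|\phi_{k,n}\|_\infty = 1$ on $[-W,W]$, so the set is uniformly bounded by $1$ as required. Theorem~\ref{thm:influenceregularization} then yields
\[
    \|k_n^\epsilon\|_\infty \leq \frac{n}{1-\epsilon^2} + \big((i_2 - i_1) + \Stail\big) \max_{i_1 < i \leq n} \|u_i\|_\infty^2.
\]

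It then remains only to bound the coefficient $(i_2 - i_1) + \Stail$. Combining the formula~\eqref{eq:thm11_1} for $i_2$ from Theorem~\ref{thm:effectivedof} with~\eqref{eq:i1} for $i_1$ gives
\[
    i_2 - i_1 = \big(\lceil 2nW \rceil - \lfloor 2nW \rfloor\big) + 3 + \frac{4}{\pi^2}\log\!\left(\frac{8}{\epsilon^2}\right)\log(4n) \leq 4 + \frac{4}{\pi^2}\log\!\left(\frac{8}{\epsilon^2}\right)\log(4n),
\]
using $\lceil 2nW \rceil - \lfloor 2nW \rfloor \leq 1$. Adding the estimate $\Stail \leq \frac{2}{\pi^2}\log(4n)$ from~\eqref{eq:thm11_2} and collecting the $\log(4n)$ terms recovers the coefficient $4 + \frac{2}{\pi^2}\big(2\log(8/\epsilon^2) + 1\big)\log(4n)$ appearing in~\eqref{eq:fourierextension_k}, which finishes the argument.

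The main obstacle is the first claim; the second is essentially bookkeeping once $i_1$ is in hand. The delicate points are to locate and correctly orient the lower eigenvalue bound in~\cite{karnikImprovedBoundsEigenvalues2021}, keeping track of the index shift between $\sigma_i^2$ and $\lambda_{i-1}$, and to confirm that the cited estimate delivers precisely the floor and logarithmic offset of~\eqref{eq:i1} rather than a slightly different constant. In particular I would double-check that the asymmetry between the ceiling in~\eqref{eq:thm11_1} and the floor in~\eqref{eq:i1} is exactly what the two-sided bounds produce, absorbing any residual integer constant into the additive $+3$ that appears in $i_2 - i_1$.
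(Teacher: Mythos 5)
Your proposal is correct and follows essentially the same route as the paper: identify $\sigma_i(\mathcal{T}_n)^2$ with the prolate-matrix eigenvalues $\lambda_{i-1}$, invoke the two-sided plunge-region bounds of~\cite[Corollary 1]{karnikImprovedBoundsEigenvalues2021} to establish~\eqref{eq:i1}, and then combine~\eqref{eq:boundonridgeleveragescores2} with the $i_2$ and $\Stail$ estimates from Theorem~\ref{thm:effectivedof}. Your explicit bookkeeping of $(i_2-i_1)+\Stail \leq 4 + \tfrac{2}{\pi^2}\bigl(2\log(8/\epsilon^2)+1\bigr)\log(4n)$, and your verification that the exponentials are linearly independent and uniformly bounded, are details the paper's proof leaves implicit but which match its bound exactly.
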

\begin{proof}
    Similarly as for the proof of Thm.~\ref{thm:effectivedof}, observe that $\sigma_i^2 = \lambda_{i-1}$, for all $i = 1, \dots, n$, where the latter is analyzed in \cite{karnikImprovedBoundsEigenvalues2021}. It follows from~\cite[Corollary 1]{karnikImprovedBoundsEigenvalues2021} that~\eqref{eq:k1} holds for~\eqref{eq:i1}. By combining the results from Thm.~\ref{thm:effectivedof} with~\eqref{eq:boundonridgeleveragescores2} and~\eqref{eq:i1}, we arrive at~\eqref{eq:fourierextension_k}.
\end{proof}

It remains to investigate the behaviour of the basis functions $u_i$. In Appendix~\ref{sec:app3}, we summarize known asymptotic results due to Slepian, which indicate that $\| u_i \|_{L^\infty} = \mathcal{O}(\sqrt{n})$ as $n \to \infty$ for all $i$. Consequently,
\[
    \| k^\epsilon \|_{L^\infty} = \mathcal{O}(n \log(n)).
\]
Numerical verification is provided in Figure~\ref{fig:uniformsampling}, which confirms that $\|k\|_{L^\infty} = \mathcal{O}(n^2)$, while $\|k^\epsilon\|_{L^\infty}$ appears to grow linearly in $n$. This difference translates into the need for log-linear or quadratic oversampling when computing a least squares approximation in finite-precision or exact arithmetic, respectively, using uniformly random samples. 

As a concrete example, we approximate $f = 1/(1-0.32x)$ on $[-W,W]$ with $W = 0.3$ in the Fourier extension frame defined by~\eqref{eq:fourierextension} using uniformly random samples. The approximation is computed via a truncated singular value decomposition with threshold $\epsilon = 100 \, \machepsdp \approx 2 \times 10^{-14}$. Figure~\ref{fig:uniformsampling2} shows the $L^2$-error, demonstrating that linear oversampling suffices for near-best numerical errors. By contrast, a theoretical analysis ignoring regularization (cf.\ Thm.~\ref{thm:uniformsampling1}) would suggest the need for quadratic oversampling.

The approximation seems to converge root-exponentially; a more detailed analysis of the convergence behaviour is given in~\cite{adcockNumericalStabilityFourier2014}. Without regularization, convergence is exponential—see~\cite{huybrechsFourierExtensionNonperiodic2010} and the example in Section~\ref{sec:numexample1}—but this comes at the cost of quadratic sample complexity. Therefore, we observe root-exponential convergence in the number of sample points $m$, both in the presence and absence of numerical effects. This provides an instructive example in which the negative impact of finite precision (reduced accuracy) is counterbalanced by a positive effect (reduced data requirements). A similar phenomenon has been reported for deterministic, equispaced sample points in polynomial extension frames~\cite{adcockFastStableApproximation2023}.

\begin{figure}
    \centering
    \begin{subfigure}{.49\linewidth}
        \centering
        \includegraphics[width=\linewidth]{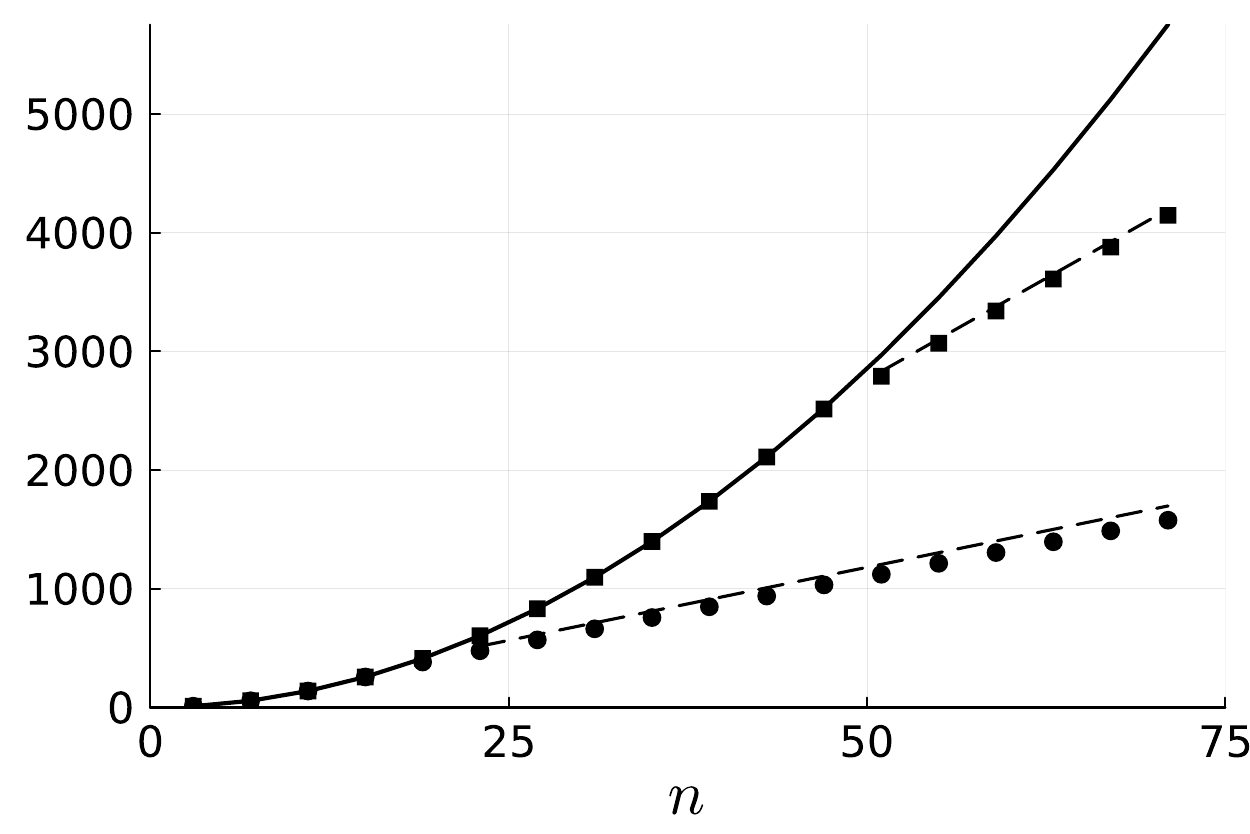}
        \caption{$W = 0.3$}
    \end{subfigure}\hfill%
    \begin{subfigure}{.49\linewidth}
        \centering
        \includegraphics[width=\linewidth]{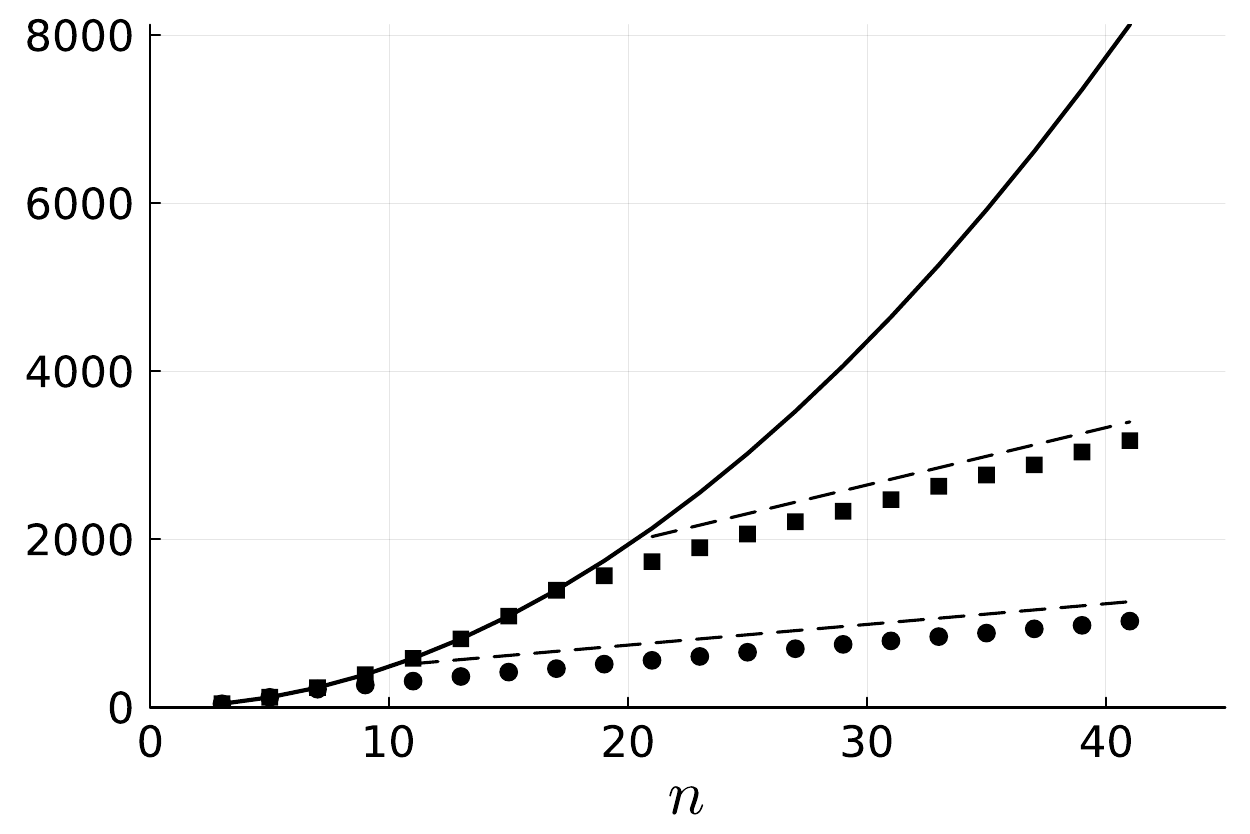}
        \caption{$W = 0.1$}
    \end{subfigure}
    \caption{The maximum of the inverse (regularized) Christoffel function for the univariate Fourier extension frame~\eqref{eq:fourierextension} as a function of $n$. Full line: $\|k\|_{L^\infty}$, dots: $\|k^\epsilon\|_{L^\infty}$ for single precision $\epsilon = 100 \, \machepssp \approx 10^{-5}$, and squares: $\|k^\epsilon\|_{L^\infty}$ for double precision
    $\epsilon = 100 \, \machepsdp \approx 2 \times 10^{-14}$. The linear behaviour of $k^\epsilon$ versus the quadratic behaviour of $k$ implies a difference between log-linear and quadratic sampling rates for least squares fitting with uniformly random samples. The dashed lines mark an empirical estimate of the form $5 \log_{10}(1/\epsilon) + C$, $C \in \mathbb{R}$. }
    \label{fig:uniformsampling}
\end{figure}

\begin{figure}
    \centering
    \includegraphics[width=.6\linewidth]{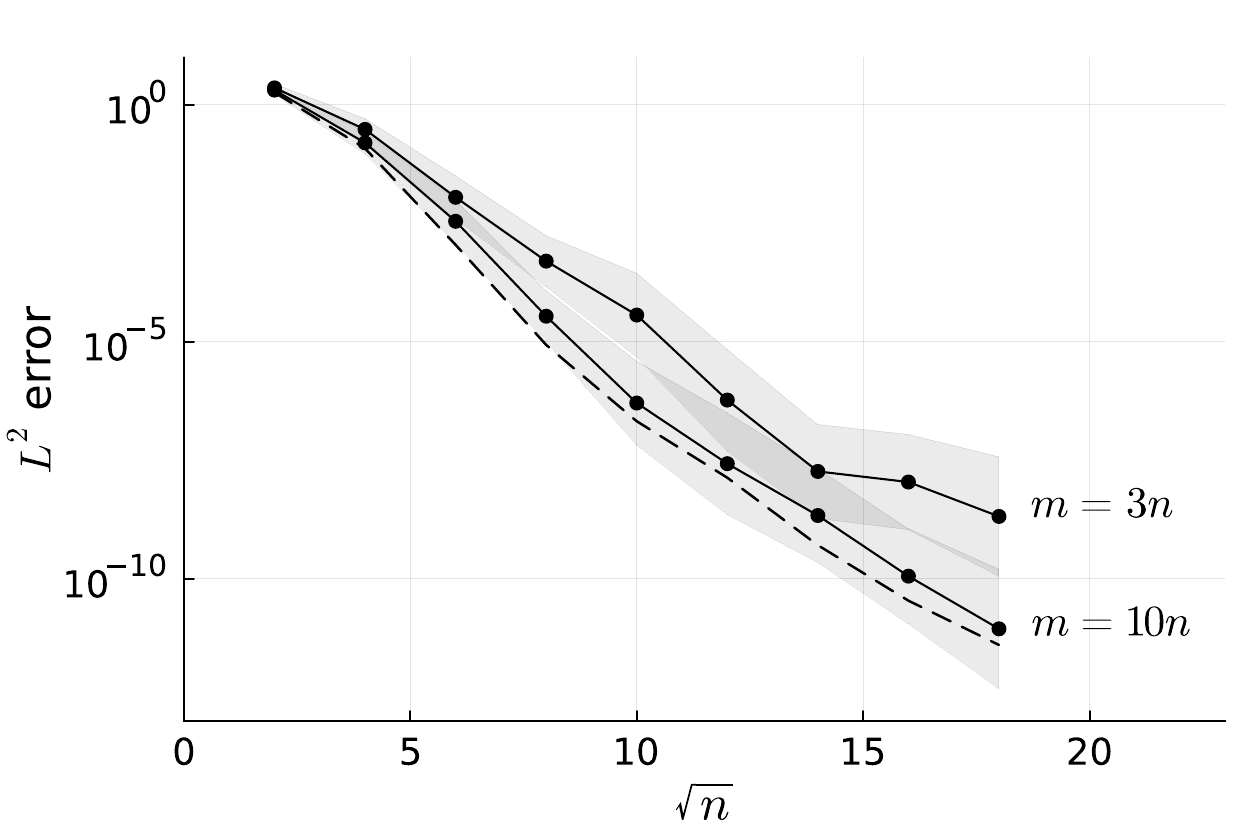}
    \caption{Error of the TSVD approximation of $f = 1/(1 - 0.32x)$ in the Fourier extension frame defined by~\eqref{eq:fourierextension} for $W = 0.3$, using $m$ uniformly random samples and a truncation threshold $\epsilon = 100 \, \machepsdp \approx 2 \times 10^{-14}$. The dots indicate the geometric mean and the shaded area shows the variance over ten repetitions. The dashed line corresponds to approximation using $\mathcal{O}(n^2)$ equispaced samples. The figure illustrates that linear oversampling suffices for near-best numerical errors. }
    \label{fig:uniformsampling2}
\end{figure}

\section*{Acknowledgements}
We greatly appreciate the fruitful discussions and helpful suggestions provided by Ben Adcock, Felix Bartel, Nick Dewaele, Karlheinz Gröchenig, Christopher Musco, Thijs Steel, Alex Townsend, Nick Trefethen and Raf Vandebril.

\section*{Funding}
The first author is a Ph.D fellow of the Research Foundation Flanders (FWO), funded by grant 11P2T24N.

\appendix

\section{Accuracy under backward stable computation}\label{sec:app1}
\begin{proof}[Proof of Thm.~\ref{thm:backwardstab1}]
    Due to the definition of $\hat{c}$, we obtain for every $c \in \mathbb{C}^n$ that
    \begin{align*}
        \|A\hat{c} - b\|_2 &\leq \|(A+\Delta A)\hat{c} - (b+\Delta b)\|_2 + \|\Delta A\|_2 \|\hat{c}\|_2 + \|\Delta b\|_2 \\
        &\leq \|(A+\Delta A)c - (b+\Delta b)\|_2 + \|\Delta A\|_2 \|\hat{c}\|_2 + \|\Delta b\|_2 \\
        &\leq \|Ac - b\|_2 + \|\Delta A\|_2 (\|\hat{c}\|_2 + \|c\|_2) + 2 \|\Delta b\|_2 \\
        &\leq \|Ac - b\|_2 + \Calg u \left( \|A\|_2 \left(\|\hat{c}\|_2 + \|c\|_2 \right) + 2\|b\|_2\right)
    \end{align*}
    giving the first result. Furthermore, note that 
    \[
        \|\hat{c}\|_2 = \|(A+\Delta A)^\dagger (b+\Delta b)\|_2 \leq \frac{\|b+\Delta b\|_2}{\sigma_{\min}(A+\Delta A)} \leq \frac{(1+\Calg u) \|b\|_2}{\sigma_{\min}(A+\Delta A)},
    \]
    where $\sigma_{\min}$ denotes the smallest nonzero singular value. When $\kappa(A) > (\Calg u)^{-1}$, we have $$\sigma_{\min}(A+\Delta A) \geq \sigma_{\min}(A) - \|\Delta A\|_2 \geq \sigma_{\min}(A) - \Calg u \|A\|_2,$$ yielding the bound on $\|\hat{c}\|_2$. For the bound on the residual, consider $c = A^\dagger b$ in the first inequality and substitute the bound on $\|\hat{c}\|_2$. This results in
    \begin{align*}
        \|A\hat{c} - b\|_2 &\leq  \|AA^\dagger b - b\|_2 + \Calg u \left( \|A\|_2 \left(\|\hat{c}\|_2 + \|A^\dagger b\|_2 \right) + 2\|b\|_2\right) \\
        &\leq  \|AA^\dagger b - b\|_2 + \Calg u \left( \frac{(1+\Calg u)\|A\|_2 }{\sigma_{\min}(A) - \Calg u \|A\|_2} + \kappa(A) + 2\right) \|b\|_2.
    \end{align*}
\end{proof}

\begin{proof}[Proof of Thm.~\ref{thm:backwardstab2}]
    Due to~\eqref{thm:backwardstab2:eq}, we obtain for every $c \in \mathbb{C}^n$ that 
    \begin{align*}
        e_\epsilon(\hat{c}; A, b) = \|A\hat{c} - b\|_2 + \epsilon \|\hat{c}\|_2 &\leq \|(A+\Delta A)\hat{c} - (b+\Delta b)\|_2 + \epsilon \|\hat{c}\|_2 + \|\Delta A\|_2 \|\hat{c}\|_2 + \|\Delta b\|_2 \\ 
        &\leq \|(A+\Delta A)\hat{c} - (b+\Delta b)\|_2 + 2 \epsilon \|\hat{c}\|_2 + \|\Delta b\|_2  \\
        &\leq 2C \left( \|(A+\Delta A)c - (b+\Delta b)\|_2 + \epsilon \|c\|_2 \right) + \|\Delta b\|_2 \\
        &\leq 2C \|Ac - b\|_2 + 3C \epsilon \|c\|_2 + (1+2C)\|\Delta b\|_2.
    \end{align*}
\end{proof}

\section{Properties of the (regularized) Christoffel function} \label{sec:app2}
\begin{proof}[Proof of Prop.~\ref{prop:extremalproperty}]
    Consider the singular value decomposition $\mathcal{T} = \sum_{i=1}^{\hat{n}} \sigma_i u_i v_i^*$ and define $a \in \mathbb{C}^{\hat{n}}$ via $(a)_i = \langle v_i, c \rangle_2$ such that $c = \sum_{i=1}^{\hat{n}} a_i v_i$. We find that the denominator equals
    \[
        \|\mathcal{T}c \|_{L^2_\rho}^2 + \epsilon^2 \|c\|_2^2 = \norm{\sum_{i=1}^{\hat{n}} a_i \sigma_i u_i }_{L^2_\rho}^2 + \epsilon^2 \|a\|_2^2 = \sum_{i=1}^{\hat{n}} (\sigma_i^2 + \epsilon^2) \vert a_i \vert^2 
    \]
    using the orthonormality of $\{u_i\}_{i=1}^{\hat{n}}$. Moreover, the numerator equals 
    \[
        \vert \phi(x) c \vert^2 = \left\vert \sum_{i=1}^{\hat{n}} a_i \sigma_i u_i(x) \right\vert^2 
    \]
    using $\phi(x) = \sum_{i=1}^{\hat{n}} \sigma_i u_i(x) v_i^*$. Consider now $b \in \mathbb{C}^{\hat{n}}$ defined by $(b)_i = \sqrt{\sigma_i^2 + \epsilon^2} \, \overline{a_i}$, then 
    \[
        \frac{\left\vert \sum_{i=1}^{\hat{n}} a_i \sigma_i u_i(x) \right\vert^2 }{ \sum_{i=1}^{\hat{n}} (\sigma_i^2 + \epsilon^2) \vert a_i \vert^2 } = \frac{\left| \langle b, d \rangle_2 \right|^2}{\|b\|_2^2} \qquad \text{where} \qquad d = \begin{bmatrix} \frac{\sigma_1}{\sqrt{\sigma_1^2 + \epsilon^2}} u_1(x) & \dots & \frac{\sigma_{\hat{n}}}{\sqrt{\sigma_{\hat{n}}^2 + \epsilon^2}} u_{\hat{n}}(x) \end{bmatrix}^\top
    \]
    for fixed $x \in X$. Maximizing the right-hand side of~\eqref{eq:extremalprop} over $c$ corresponds to maximizing the expression above over $b$. This maximizer equals $b^\star = d$ and, hence, we obtain 
    \[
        \frac{\left| \langle b^\star, d \rangle_2 \right|^2}{\|b^\star\|_2^2} = \frac{\left| \langle d, d \rangle_2 \right|^2}{\|d\|_2^2} = \|d\|_2^2 = \sum_{i=1}^{\hat{n}} \frac{\sigma_i^2}{\sigma_i^2 + \epsilon^2} \lvert u_i(x) \rvert^2 = k^\epsilon(x).
    \]
\end{proof}

\begin{proof}[Proof of Prop.~\ref{prop:grammatrixform}]
    $G$ is positive semidefinite and, hence, there exists a (partial) eigenvalue decomposition of the form $V \Sigma^2 V^*$, where $V\in\mathbb{C}^{n \times \hat{n}}$ is unitary and $\Sigma \in \mathbb{C}^{\hat{n} \times \hat{n}}$ is diagonal. Moreover, this implies that the synthesis operator $\mathcal{T}$ associated with $\Phi$ has a singular value decomposition of the form $\mathcal{T} = \sum_{i=1}^{\hat{n}} \sigma_i u_i v_i^*$ where $\{u_i\}_{i=1}^{\hat{n}}$ is an orthonormal basis for $\SPAN(\Phi)$, $v_i$ are the columns of $V$ and $\sigma_i = \Sigma_{i,i}$. From here we get $\phi(x) = \sum_{i=1}^{\hat{n}} \sigma_i u_i(x) v_i^*$, leading to
    \begin{align*}
        \phi(x) G^\dagger \phi(x)^* &= \left( \sum_{i=1}^{\hat{n}} \sigma_i u_i(x) v_i^* \right) V \Sigma^{-2} V^* \left( \sum_{i=1}^{\hat{n}} \sigma_i \overline{u_i(x)} v_i \right) \\
        &= \left( \sum_{i=1}^{\hat{n}} \sigma_i u_i(x) v_i^* V \right) \Sigma^{-2} \left( \sum_{i=1}^{\hat{n}} \sigma_i \overline{u_i(x)} V^* v_i \right) \\
        &= \begin{bmatrix} \sigma_1 u_1(x) & \dots & \sigma_{\hat{n}} u_{\hat{n}}(x) \, \end{bmatrix} \Sigma^{-2} \begin{bmatrix} \sigma_1 \overline{u_1(x)} & \dots & \sigma_{\hat{n}} \overline{u_{\hat{n}}(x)} \end{bmatrix}^\top \\
        &= \sum_{i=1}^{\hat{n}} \lvert u_i(x) \rvert^2,
    \end{align*}
    which equals $k(x)$ following its definition. From a very similar derivation using the eigenvalue decomposition of $G + \epsilon^2 I = V_{\text{ext}} (\Sigma_{\text{ext}}^2 + \epsilon^2 I) V_{\text{ext}}^*$ where $V_{\text{ext}}, \Sigma_{\text{ext}} \in \mathbb{C}^{n \times n}$, one obtains the characterization of $k^\epsilon$. 
\end{proof}

\begin{proof}[Proof of Prop.~\ref{prop:discretechristoffel}]
    We have 
    \begin{align*}
        \eqref{eq:normequivalence1} \; \Rightarrow \; \frac{1}{1+\eta} G \preceq A^*A \preceq \frac{1}{1-\eta} G \; &\Rightarrow \; (1-\eta)G^\dagger \preceq (A^*A)^\dagger \preceq (1+\eta)G^\dagger \\ &\Rightarrow \; (1-\eta)k(x) \leq k^d(x) \leq (1+\eta)k(x)
    \end{align*}
    for all $x \in X$, from which the final result follows immediately. A similar derivation can be done in the regularized case, leading to
    \begin{align*}
        \eqref{eq:normequivalence2} \; \Rightarrow \; \frac{1}{1+\eta} (G+\epsilon^2 I) \preceq A^*A + \epsilon^2 I \preceq \frac{1}{1-\eta} (G + \epsilon^2 I) \; \Rightarrow \; (1-\eta)k^\epsilon(x) \leq k^{d,\epsilon}(x) \leq (1+\eta)k^\epsilon(x).
    \end{align*}
    For the alternative characterization, note that $A^\dagger = (A^*A)^\dagger A^*$ and, hence, 
    \[
        A^\dagger (A^\dagger)^* = (A^*A)^\dagger A^* A ((A^* A)^\dagger)^* = (A^*A)^\dagger A^* A (A^* A)^\dagger = (A^* A)^\dagger
    \]
    where we also use that $(A^* A)^\dagger$ is Hermitian and $M^\dagger M M^\dagger = M^\dagger$. Therefore, 
    \[
        k^d(x) = \phi(x) (A^* A)^\dagger \phi(x)^* = \phi(x) A^\dagger (A^\dagger)^* \phi(x)^* = \norm{\phi(x) A^\dagger}_2^2. 
    \]
    Analogously, denoting $\Aext = \begin{bmatrix} A \\ \epsilon I \end{bmatrix}$,
    \[
        k^\epsilon(x) = \phi(x) (A^*A+\epsilon^2 I )^{-1} \phi(x)^* = \phi(x) \left( \Aext^* \Aext \right)^{-1} \phi(x)^* = \norm{\phi(x) \Aext^\dagger}_2^2. 
    \]
\end{proof}

\begin{proof}[Proof of Thm.~\ref{thm:influenceregularization}]
    Equation~\eqref{eq:boundonnlambda} follows from splitting the summation that defines $\hat{n}^\epsilon$ as 
    \[
        \hat{n}_\epsilon  = \sum_{i=1}^{\hat{n}} \frac{\sigma_i^2}{\sigma_i^2 + \epsilon ^2} = \sum_{i=1}^{i_2} \frac{\sigma_i^2}{\sigma_i^2 + \epsilon ^2} + \sum_{i=i_2+1}^{\hat{n}} \frac{\sigma_i^2}{\sigma_i^2 + \epsilon ^2} \leq \sum_{i=1}^{i_2} 1 + \sum_{i=i_2+1}^{\hat{n}} \frac{\sigma_i^2}{\epsilon ^2} = i_2 + \Stail.
    \]
    For~\eqref{eq:boundonridgeleveragescores1}, consider 
    \begin{equation*}
        k^\epsilon_\text{trunc} (x) = \sum_{i=1}^{i_2} \lvert u_i(x) \rvert^2 \leq 2 \sum_{i=1}^{i_2} \frac{\sigma_i^2}{\sigma_i^2 + \epsilon^2}\lvert u_i(x) \rvert^2 \leq 2 \sum_{i=1}^{\hat{n}} \frac{\sigma_i^2}{\sigma_i^2 + \epsilon^2} \lvert u_i(x) \rvert^2 = 2k_n^\epsilon(x)
    \end{equation*}
    and
    \begin{equation*}
        k^\epsilon (x) = \sum_{i=1}^{\hat{n}} \frac{\sigma_i^2}{\sigma_i^2 + \epsilon ^2} \lvert u_i(x) \rvert^2 \leq \sum_{i=1}^{i_2} \lvert u_i(x) \rvert^2 + \sum_{i=i_2 + 1}^{\hat{n}} \frac{\sigma_i^2}{\epsilon ^2} \lvert u_i(x) \rvert^2 \leq k^\epsilon_\text{trunc}(x) + \Stail  \max_{ i_2 < i \leq \hat{n}} \lvert u_i(x) \rvert^2.
    \end{equation*}
    In order to obtain~\eqref{eq:boundonridgeleveragescores2}, we split the above as 
    \begin{align*}
        \| k^\epsilon \|_{L^\infty} &\leq \esssup_{x \in X} \left( \sum_{i=1}^{i_1} \lvert u_i(x) \rvert^2 + \sum_{i=i_1+1}^{i_2} \lvert u_i(x) \rvert^2 + \Stail \max_{ i_2 < i \leq \hat{n}} \lvert u_i(x) \rvert^2 \right) \\ &\leq \esssup_{x\in X} \sum_{i=1}^{i_1} \lvert u_i(x) \rvert^2 + ((i_2 - i_1) + \Stail) \max_{ i_1 < i \leq \hat{n}} \| u_i\|_{L^\infty}^2.
    \end{align*}
    Define $\Sigma_1 \in \mathbb{C}^{i_1 \times i_1}$ via $\Sigma_{i,i} = \sigma_i$ and let $V_1 \in \mathbb{C}^{n \times i_1}$ contain the vectors $\{v_i\}_{i=1}^{i_1}$ as columns (coming from the SVD of $\mathcal{T}$), then the first term can be bounded via
    \[
        \sum_{i=1}^{i_1} \lvert u_i(x) \rvert^2 = \| \phi(x) V_1 \Sigma_1^{-1} \|_2^2  \leq \| \phi(x) \|_2^2 \| V_1 \|_2^2 \| \Sigma_1^{-1} \|_2^2 = \frac{\| \phi(x) \|_2^2}{\sigma_{i_1}^2} \leq \frac{\| \phi(x) \|_2^2}{1-\epsilon^2},
    \]
    where $\phi(x) = \begin{bmatrix} \phi_1(x) & \dots & \phi_n(x) \end{bmatrix}$. The final result follows from
    \[
        \esssup_{x \in X} \frac{\| \phi(x) \|_2^2}{1-\epsilon^2} = \frac{1}{1-\epsilon^2} \esssup_{x \in X} \sum_{i=1}^{n} \lvert \phi_i(x) \rvert^2 \leq \frac{1}{1-\epsilon^2} \sum_{i=1}^{n} \|\phi_i\|_{L^\infty}^2 \leq \frac{n}{1-\epsilon^2}.
    \]
\end{proof}

\section{Asymptotics of DPSWFs} \label{sec:app3}
In this section, we aim to motivate that the maxima $\|u_i\|_{L^\infty}$ associated to the one-dimensional Fourier extension frame~\eqref{eq:fourierextension} are $\mathcal{O}(\sqrt{n} \kern1pt)$ as $n \to \infty$. In order to do so, observe that
\begin{equation} \label{eq:linktodpswf}
    \|u_i\|_{L^\infty} = \|U_{i-1}(n,W)\|_{L^\infty}/\sqrt{\lambda_{i-1}(n,W)}, 
\end{equation}
where $U_k(n,W)$ and $\lambda_k(n,W)$ are the discrete prolate spheroidal wave functions (DPSWFs) and the associated eigenvalues analyzed by Slepian in~\cite{slepianProlateSpheroidalWave1978}. We recall and examine their asymptotic behaviour for the different regimes identified in~\cite[\S2.4-2.5]{slepianProlateSpheroidalWave1978}. Numerical evidence confirms that~\eqref{eq:linktodpswf} is bounded by $\mathcal{O}(\sqrt{n})$, as is the case for the prolate spheroidal wave functions~\cite[Prop.\ 3.1]{bonamiGaussianBoundsHeat2023}, the continuous analogues of the DPSWFs. 

Given the asymptotic expansions outlined in~\cite[section 2.4-2.5]{slepianProlateSpheroidalWave1978}, we can deduce that as $n \to \infty$:
\begin{itemize}
    \item for $i \leq 2nW + 1$,
    \begin{equation}
        \|u_i\|_{L^\infty} = \mathcal{O}(\sqrt{n}),
    \end{equation}
    \item for $i = \lfloor 2nW + (b/\pi) \log(n) \rfloor + 1$ with fixed $b > 0$,
    \begin{equation}
        \|u_i\|_{L^\infty} \sim r(E \beta) \sqrt{\beta } \exp(\pi E \beta / 4) \sqrt{\frac{3\pi (1 + \exp(\pi b))}{\beta(1 + \exp(\pi E \beta)) \ln(n)}} \sqrt{n},
    \end{equation}
    where $r$, $E$ and $\beta$ are defined by~\cite[eq. (52)-(54)]{slepianProlateSpheroidalWave1978},
    \item for $i = \lfloor 2nW(1 + \epsilon) \rfloor + 1$ with fixed $0 < \epsilon < 1/2W- 1$,
    \begin{equation}
        \|u_i\|_{L^\infty} \sim L_2^{-1/2}\pi  (1 - \cos(2\pi W)^2)^{-1/4} \sqrt{n},
    \end{equation}
    where $L_2$ is defined by~\cite[eq. (47)]{slepianProlateSpheroidalWave1978} with $A = -\cos(2\pi W)$ and $k = n - (i - 1)$,
    \item for $i = n - l$ with fixed $0 \leq l$,
    \begin{equation}
        \|u_i\|_{L^\infty} \sim \sqrt{\pi} \left( \frac{2}{1 - \cos(2\pi W)}\right)^{1/4} \sqrt{n}.
    \end{equation}
\end{itemize}

The result for $i \leq 2nW + 1$ follows from considering
\begin{align*}
    \|u_i\|_{L^\infty[-W,W]} &\leq \|u_i\|_{L^\infty[-1/2,1/2]} \\ &= \|U_{i-1}(n,W)\|_{L^\infty[-1/2,1/2]}/\sqrt{\lambda_{i-1}(n,W)} \\
    &\leq \sqrt{n}/\sqrt{\lambda_{i-1}(n,W)},
\end{align*}
where we used the Nikolskii inequality~\cite[Theorem 1]{nesselNikolskiitypeInequalitiesTrigonometric1978} with $\|U_{i-1}(n,W)\|_{L^\infty[-1/2,1/2]} = 1$. 
Furthermore,
\[
    1/\sqrt{\lambda_{i-1}(n,W)} \leq 1/\sqrt{\lambda_{\lfloor 2nW \rfloor }(n,W)} \sim \sqrt{2}
\]
using~\cite[eq.\ (60)]{slepianProlateSpheroidalWave1978}. Hence, we get $\|u_i\|_{L^\infty[-W,W]} = \mathcal{O}(\sqrt{n})$, where the proportionality factor is at most approximately $\sqrt{2}$.

For the case of $i \geq 2nW + 1$, it follows from~\cite[Corollary 2]{saidNonasymptoticBoundsDiscrete2023} and the fact that the DPSWFs are even or odd that 
\[
    \|u_i\|_{L^\infty} = \frac{\| U_{i-1}(n,W) \|_{L^\infty}}{\sqrt{\lambda_{i-1}(n,W)}} = \frac{\lvert U_{i-1}(n,W; \omega = 2\pi W) \rvert}{\sqrt{\lambda_{i-1}(n,W)}}. 
\]
Hence, the maximum is achieved at the boundary. Furthermore, by combining~\cite[eq.\ (50) and (60)]{slepianProlateSpheroidalWave1978} for $U_k(n,W)$ and $\lambda_k(n,W)$ for $k = i-1 = \lfloor 2nW + (b/\pi) \log(n) \rfloor$ with fixed $b$, one obtains 
\[
    \frac{\lvert U_k(n,W; \omega = 2\pi W) \rvert}{\sqrt{\lambda_k(n,W)}} \sim r(E \beta) \sqrt{\beta } \exp(\pi E \beta / 4) \sqrt{\frac{3\pi (1 + \exp(\pi b))}{\beta(1 + \exp(\pi E \beta)) \ln(n)}} \sqrt{n}
\]
Similarly, for $k = i - 1 = \lfloor 2nW(1 + \epsilon) \rfloor$ with fixed $0 < \epsilon < 1/2W-1$ we derive from~\cite[eq.\ (42), (56), (57) and (63)]{slepianProlateSpheroidalWave1978} that
\[
    \frac{\lvert U_k(n,W; \omega = 2\pi W) \rvert}{\sqrt{\lambda_k(n,W)}} \sim L_2^{-1/2}\pi  (1 - \cos(2\pi W)^2)^{-1/4} \sqrt{n},
\]
while for $k = i-1 = n-l$ with fixed $l \geq 1$ we derive from~\cite[eq.\ (13), (38) and (58)]{slepianProlateSpheroidalWave1978} that
\[ 
    \frac{\lvert U_k(n,W; \omega = 2\pi W) \rvert}{\sqrt{\lambda_k(n,W)}} \sim \sqrt{\pi} \left( \frac{2}{1 - \cos(2\pi W)}\right)^{1/4} \sqrt{n}.
\]

\bibliographystyle{abbrv}
\bibliography{ref2}

\end{document}